\def\jump#1{\llbracket #1 \rrbracket }
\crefname{hypothesis}{Hypothesis}{Hypotheses}
\title{Oscillation-eliminating central DG schemes for hyperbolic conservation laws
	\thanks{%Submitted to the editors DATE.
		%\funding{
		This work was partially supported by Shenzhen Science and Technology Program (No.~RCJC20221008092757098) and 
		National Natural Science Foundation of China (No.~12171227).}}
\author{Manting Peng \thanks{Department of Mathematics, Southern University of Science and Technology, Shenzhen, Guangdong 518055, China.} 
	\and Kailiang Wu \thanks{Corresponding author. Department of Mathematics and Shenzhen International Center for Mathematics, Southern University of Science and Technology, Shenzhen, Guangdong 518055, China (\email{wukl@sustech.edu.cn}).}
 \and Caiyou Yuan\thanks{Department of Mathematics, Southern University of Science and Technology, Shenzhen, Guangdong 518055, China.} 
 } %\footnotemark[3]
\begin{document}
	
	\maketitle
	
	% REQUIRED  
	\begin{abstract}This paper proposes and analyzes a class of essentially non-oscillatory central discontinuous Galerkin (CDG) methods for general hyperbolic conservation laws. 
First, we introduce a novel compact, non-oscillatory stabilization mechanism that effectively suppresses spurious oscillations while preserving the high-order accuracy of CDG methods. Unlike existing limiter-based approaches that rely on large stencils or problem-specific parameters for oscillation control, our \emph{dual damping mechanism} is inspired by CDG-based numerical dissipation and leverages overlapping solutions within the CDG framework, significantly enhancing stability while maintaining compactness. Our approach is {free of problem-dependent parameters and complex characteristic decomposition}, making it both efficient and robust. 
Second, we provide---for the first time---a rigorous stability and optimal error analysis for fully discrete Runge--Kutta (RK) CDG schemes, addressing a gap in the theoretical understanding of these methods. Specifically, we establish the approximate skew-symmetry and weak boundedness of the CDG discretization, which form the foundation for proving the linear stability of RK CDG methods via the matrix transferring technique. These results enable us to rigorously analyze the \emph{fully discrete error estimates} for our oscillation-eliminating CDG (OECDG) method, a challenging task due to its nonlinear nature, even for linear advection equations. 
Building on this framework, we reformulate nonlinear oscillation-eliminating CDG schemes as linear RK CDG schemes with a nonlinear source term, extending error estimates beyond the linear case to schemes with nonlinear oscillation control. While existing error analyses for DG or CDG schemes have largely been restricted to linear cases without nonlinear oscillation-control techniques, our analysis represents an important theoretical advancement. 
Extensive numerical experiments validate the theoretical findings and demonstrate the effectiveness of the OECDG method across a wide range of hyperbolic conservation laws.

	\end{abstract}
	
	% REQUIRED
	\begin{keywords}
		Hyperbolic conservation laws, central discontinuous Galerkin method, oscillation control, fully discrete, error analysis, stability
	\end{keywords}
	
	% REQUIRED
	\begin{MSCcodes}
		65M60, 65M12, 35L65
	\end{MSCcodes}
	
    \section{Introduction}

The central Discontinuous Galerkin (CDG) methods, first introduced by Liu et al.~in \cite{liu2007central}, are a class of finite element techniques that combine the strengths of central schemes \cite{nessyahu1990non,kurganov2000new,liu2004central} and discontinuous Galerkin (DG) methods \cite{cockburn1989tvb,cockburn1998runge,cockburn1990runge,cockburn2001runge}. These methods preserve many of the advantageous characteristics of both approaches. For instance, CDG methods, like traditional DG methods, utilize discontinuous piecewise polynomial spaces, enabling great flexibility and supporting both $h$- and $p$-adaptive strategies. As a category of central schemes, CDG methods eliminate the need for Riemann solvers, thus providing a versatile, black-box solution for hyperbolic conservation laws. 
In recent years, CDG methods have garnered considerable interest and have been successfully applied to a wide array of hyperbolic conservation laws  \cite{zhao2017runge,zhao2017runge2,li2011central,li2012arbitrary,yakovlev2013locally}, as well as to other types of partial differential equations \cite{li2010central}.

Designing effective numerical schemes for hyperbolic conservation laws poses a key challenge: handling discontinuities and sharp transitions without inducing nonphysical oscillations. Inadequately designed schemes often produce spurious oscillations near these features, leading to violations of physical constraints, numerical instability, or even simulation crashes. Thus, there is a critical need to develop numerical methods capable of controlling these spurious oscillations effectively.

There are two primary strategies for mitigating oscillations in numerical schemes. 
The first one is to apply suitable limiters,
which modify the numerical solution within troubled cells 
identified by specific indicators. 
Examples of these limiters include but are not limited to
total variation diminishing limiter, 
total variation bounded limiter, 
and weighted essentially nonoscillatory type limiters, 
as discussed in \cite{qiu2005runge,cockburn1989tvb,zhong2013simple,chavent1989local}.
The second strategy involves introducing artificial viscosity to diffuse oscillations, as detailed in  \cite{hiltebrand2014entropy,zingan2013implementation,huang2020adaptive,yu2020study}. While both approaches can be effective, they often rely on problem-specific parameters or require a large stencil for reconstructing the modified solutions, which may limit their general applicability, compactness, and computational efficiency.

Recently, Lu, Liu, and Shu \cite{lu2021oscillation} introduced the oscillation-free DG (OFDG) method, which incorporates innovative damping terms to suppress spurious oscillations. Using predefined damping coefficients, OFDG methods have demonstrated applicability across a wide range of problems, as shown in \cite{liu2022essentially, liu2022oscillation, tao2023oscillation, du2023oscillation}. However, the damping terms are not scale-invariant and can become highly stiff when solutions develop strong shocks, restricting the allowable time step size for explicit time discretization. 
Motivated by the OFDG methods, 
the authors of \cite{peng2023oedg} 
proposed oscillation-eliminating (OE) DG methods, referred to as OEDG, 
which introduce a novel approach 
by evolving a new scale-invariant damping ordinary differential equation (ODE) 
after each Runge--Kutta (RK) stage. 
This damping ODE is linear and can be solved exactly without the need for additional discretization. It 
acts as a modal filter that adjusts the high-order modal coefficients 
to effectively control spurious oscillations. 
This innovative OEDG method not only preserves conservation and optimal convergence rates,
but also maintains stability under standard Courant--Friedrichs--Lewy (CFL) conditions,
even in the presence of strong shocks. 
Moreover, the OE technique can be seamlessly integrated 
into existing DG codes in a non-intrusive manner, 
delivering essentially non-oscillatory, high-resolution numerical results without requiring characteristic decomposition. 
Due to their effectiveness, compactness, and robustness, the OEDG methods have been extended to unstructured meshes \cite{DingCuiWu2024}, adapted for the design of OE Hermite WENO schemes \cite{FanWu2024}, and successfully applied to various hyperbolic models across diverse applications, including two-phase flows \cite{YanAbgrallWu}, magnetohydrodynamics \cite{LiuWu2024}, and general relativistic hydrodynamics \cite{CaoPengWu}.

Building on the success of OE approach and the widespread adoption of CDG schemes, this paper proposes and analyzes a novel, efficient CDG scheme termed the OECDG method. This new approach integrates a specially designed OE procedure with an innovative dual damping mechanism within the CDG framework. The key innovations and contributions of this work are outlined as follows:

\begin{itemize}[leftmargin=*] 

\item We introduce robust CDG methods that incorporate a newly designed OE procedure. Unlike the OE procedure in DG methods \cite{peng2023oedg}, which relies on jump information from two adjacent cells, the proposed OE procedure draws inspiration from the numerical dissipation term of CDG methods and is based on a new dual damping mechanism that utilizes overlapping solutions within the CDG framework. This approach enhances stencil compactness and improves numerical resolution, providing an effective new method for oscillation control.

\item 
We rigorously establish optimal error estimates for the fully discrete OECDG methods, addressing a gap in the error analysis of fully discrete CDG schemes, including the original linear CDG methods without oscillation control. 
Achieving this involves several novel theoretical contributions. We prove the approximate skew-symmetry and weak boundedness of the CDG spatial discretization for the first time, a novel contribution in itself. Building on this foundation, we first establish the linear stability of CDG methods with Runge--Kutta (RK) discretization by employing the matrix transferring technique \cite{xu20192}. Finally, leveraging the linear stability of CDG methods, we rigorously derive the fully discrete optimal error estimates for the OECDG methods. Notably, since the OECDG methods are nonlinear schemes even for linear advection equations, deriving these optimal error estimates poses much more challenges compared to standard CDG methods.

\item 
We present extensive numerical experiments covering a variety of hyperbolic equations, including the convection equation, Burgers equation, traffic flow model, non-convex conservation law, and Euler equations. The numerical results validate the theoretical optimal error estimates and demonstrate the effectiveness of the OECDG methods in eliminating spurious oscillations, confirming their robustness and accuracy across a diverse range of test cases.

\end{itemize}

The rest of this paper is organized as follows: 
\Cref{sec:CDG} 
presents the proposed OECDG methods.  
\Cref{sec:Optimal}
establishes the linear stability of the
CDG methods with Runge--Kutta discretization,
and proves
the optimal error estimates of the fully discrete OECDG methods
for linear advection equation.
Various numerical tests 
are conducted in \Cref{sec:numerical}
to verify the optimal error estimates and
show the effectiveness of OECDG methods. 
Concluding remarks are provided in \Cref{sec:conclusions}.  

    \section{OECDG method}\label{sec:CDG}

This section presents the OECDG method for the scalar conservation law:
\begin{equation}\label{equ:conservation_law}
    u_t + \nabla \cdot \boldsymbol{f}(u) = 0,
\end{equation}
where $u = u(\bm{x}, t)$ and $\boldsymbol{f}(u)$ is the flux function. The extension to hyperbolic systems is straightforward and can be implemented component-wisely. 
We begin by revisiting the semi-discrete CDG method and then introduce the fully discrete OECDG method.

\subsection{Semi-discrete CDG formulation}

Let $\mathcal{T}_h^C$ be a partition of the computational domain $\Omega$ into cells (the primal mesh). For each cell $K \in \mathcal{T}_h^C$, select its center and connect it to the centers of its neighboring cells, generating the dual mesh $\mathcal{T}_h^D$. 
Define the finite element spaces
\begin{equation}\nonumber
\begin{aligned}
    V_h & :=\left\{\phi_1 \in L^2(\Omega):\left.\phi_1 \right|_{K} \in \mathbb{P}^k(K)~\forall K\in \mathcal{T}_h^C\right\},\\
    W_h & :=\left\{\phi_2 \in L^2(\Omega):\left.\phi_2 \right|_{K} \in \mathbb{P}^k(K)~\forall K\in \mathcal{T}_h^D\right\},
\end{aligned}
\end{equation}
where $\mathbb{P}^k(K)$ denotes the space of polynomials of degree at most $k$ on cell $K$. 
The semi-discrete CDG method seeks numerical solutions $(u_h^C, u_h^D) \in V_h \times W_h$ such that, for all test functions $(\phi_1, \phi_2) \in V_h \times W_h$ and all cells $C \in \mathcal{T}_h^C$, $D \in \mathcal{T}_h^D$, the following equations hold:
\begin{equation}
    \label{equcdg}
    \begin{aligned}
        \int_{C} \partial_t u_h^C\phi_1  \mathrm{d} \bm{x} 
        &= \frac{1}{\tau_{\rm max}}\int_{C} \left(u_h^D-u_h^C\right)\phi_1  \mathrm{d} \bm{x}
        +\int_{C}\boldsymbol{f}(u_h^D)\cdot\nabla\phi_1 \mathrm{d} \bm{x}
        -\sum_{e\in \partial C}\int_{e} \boldsymbol{f}(u_h^D) \cdot \bm{n}_e\,\phi_1 \mathrm{d}s,\\
        \int_{D} \partial_t u_h^D\phi_2  \mathrm{d} \bm{x} 
        &= \frac{1}{\tau_{\rm max}}\int_{D} \left(u_h^C-u_h^D\right)\phi_2 \mathrm{d} \bm{x}
        +\int_{D}\boldsymbol{f}(u_h^C) \cdot\nabla\phi_2\mathrm{d} \bm{x}
        -\sum_{e\in \partial D}\int_{e} \boldsymbol{f}(u_h^C)\cdot \bm{n}_e\,\phi_2 \mathrm{d}s,
    \end{aligned}
\end{equation}
where $\tau_{\text{max}}$ denotes the maximum stable time step size, 
and $\bm{n}_e$ is the outward unit normal vector
at $e$ with respect to the cell $C$ or $D$. 
The semi-discrete CDG method can be rewritten as follows: 
find $\boldsymbol{u}_h := (u_h^C,u_h^D) \in V_h\times W_h$ such that
\begin{equation*}
\langle \partial_t\boldsymbol{u}_h, \boldsymbol{\phi}_h \rangle 
= \mathcal{H}(\boldsymbol{u}_h, \boldsymbol{\phi}_h)\quad \forall \bm{\phi}_h \in V_h\times W_h,
\end{equation*}
where the inner product $\langle \cdot, \cdot \rangle$ is defined by 
$
\langle \boldsymbol{u}, \boldsymbol{\phi}\rangle =  
\int_{\Omega} u_1\phi_1+u_2\phi_2 \mathrm{d} \bm{x}
$ with $\bm{u}=(u_1, u_2)$ and $\bm{\phi}=(\phi_1, \phi_2)$. 
The operator $\mathcal{H}(\boldsymbol{u}_h, \boldsymbol{\phi}_h)$ represents the spatial discretization associated with the CDG method, explained in detail as follows. 

\subsubsection*{One-dimensional setting}
Let $\mathcal{T}_h^C = \{[x_{i-\frac{1}{2}},x_{i+\frac{1}{2}}]\}$ 
be a partition of 
the one-dimensional (1D) domain $\Omega=[x_{\min}, x_{\max}]$
with $h_{i} = x_{i+\frac{1}{2}}-x_{i-\frac{1}{2}}$
and $h = \max_i h_i$.
Let $x_i = \frac{1}{2}(x_{i-\frac{1}{2}} + x_{i+\frac{1}{2}})$ denote the cell centers. The dual mesh is then given by $\mathcal{T}_h^D = { [x_i, x_{i+1}] }$. 
The spatial discretization operator $\mathcal{H}(\boldsymbol{u},\boldsymbol{\phi})$ is given by
\begin{equation}\label{eq:1D_H}
\begin{aligned}
\mathcal{H}(\boldsymbol{u},\boldsymbol{\phi}) =
&-\frac{1}{\tau_{\max }} \int_{\Omega}(u_1-u_2)(\phi_1-\phi_2) \mathrm{d}x
+\int_{\Omega} f(u_2)\partial_x\phi_1+ f(u_1)\partial_x\phi_2\mathrm{d}x\\ 
&-\sum_{i} \left(f(u_2)_{i+\frac{1}{2}}(\phi_1)_{i+\frac{1}{2}}^--f(u_2)_{i-\frac{1}{2}}(\phi_1)_{i-\frac{1}{2}}^+ 
+f(u_1)_{i+1}(\phi_2)_{i+1}^--f(u_1)_{i}(\phi_2)_{i}^+ \right),
\end{aligned}
\end{equation}
where 
$(\phi_1)_{i+\frac{1}{2}}^{\pm}:= {\rm lim}_{\epsilon\rightarrow 0^+}\phi_1(x_{i+\frac{1}{2}}\pm \epsilon)$, and 
$(\phi_2)_{i}^{\pm}$ is defined similarly. 
Here, $\tau_{\rm max} = C_{\rm CFL}\frac{h}{\max{|f'(u)|}}$ 
with $C_{\rm CFL}$ being the Courant--Friedrichs--Lewy (CFL) number.

\subsubsection*{Two-dimensional setting}
Consider a two-dimensional (2D) rectangular domain $\Omega = [x_{\min}, x_{\max}] \times [y_{\min}, y_{\max}]$, discretized using Cartesian meshes. The primal mesh $\mathcal{T}_h^C$ consists of cells $[x_{i-\frac{1}{2}}, x_{i+\frac{1}{2}}] \times [y_{j-\frac{1}{2}}, y_{j+\frac{1}{2}}]$, and the dual mesh $\mathcal{T}_h^D$ consists of cells $[x_i, x_{i+1}] \times [y_j, y_{j+1}]$, where $x_i = \frac{1}{2}(x_{i-\frac{1}{2}} + x_{i+\frac{1}{2}})$ and $y_j = \frac{1}{2}(y_{j-\frac{1}{2}} + y_{j+\frac{1}{2}})$. 
The spatial discretization operator $\mathcal{H}(\boldsymbol{u}, \boldsymbol{\phi})$ is defined by
\begin{equation}\label{eq:2D_H}
\begin{aligned}
\mathcal{H}(\boldsymbol{u},\boldsymbol{\phi}) =
&-\frac{1}{\tau_{\max }} \int_{\Omega}(u_1-u_2)(\phi_1-\phi_2) \mathrm{d} \bm{x}
+\int_{\Omega} \bm{f}(u_2) \cdot \nabla \phi_1+ \bm{f}(u_1)\cdot \nabla \phi_2\mathrm{d}\bm{x}\\ 
&-\sum_{i,j} \bigg(\int_{x_{i-\frac{1}{2}}}^{x_{i+\frac{1}{2}}} 
f_2(u_2(x,y_{j+\frac{1}{2}}))(\phi_1)_{j+\frac{1}{2}}^{-}
-f_2(u_2(x,y_{j-\frac{1}{2}}))(\phi_1)_{j-\frac{1}{2}}^{+} \mathrm{d}x+\\
&\hspace{1.2cm}\int_{y_{j-\frac{1}{2}}}^{y_{j+\frac{1}{2}}} 
f_1(u_2(x_{i+\frac{1}{2}},y))(\phi_1)_{i+\frac{1}{2}}^{-}
-f_1(u_2(x_{i-\frac{1}{2}},y))(\phi_1)_{i-\frac{1}{2}}^{+} \mathrm{d}y+\\
&\hspace{1cm}\int_{x_i}^{x_{i+1}} f_2(u_1(x,y_{j+1}))(\phi_2)_{j+1}^{-}
-f_2(u_1(x,y_{j}))(\phi_2)_{j}^{+} \mathrm{d}x+\\
&\hspace{1cm}\int_{y_j}^{y_{j+1}} f_1(u_1(x_{i+1},y))(\phi_2)_{i+1}^{-}
-f_1(u_1(x_{i},y))(\phi_2)_{i}^{+} \mathrm{d}y\bigg),
\end{aligned}
\end{equation}
where $\boldsymbol{f}(u) = (f_1(u), f_2(u))$ is the flux function, and $\tau_{\text{max}}$ is defined as
$ 
\tau_{\text{max}} = \frac{C_{\text{CFL}}}{{\max |f_1'(u)|}/{h_x} + {\max |f_2'(u)|}/{h_y}},
$ 
with $h_x = \max_i (x_{i+\frac{1}{2}} - x_{i-\frac{1}{2}})$ and $h_y = \max_j (y_{j+\frac{1}{2}} - y_{j-\frac{1}{2}})$. 
The traces at cell interfaces are defined as
$(\phi_1)_{i+\frac{1}{2}}^{\pm} = \lim_{\epsilon \to 0^+} \phi_1(x_{i+\frac{1}{2}} \pm \epsilon, y)$,  
$(\phi_1)_{j+\frac{1}{2}}^{\pm} = \lim_{\epsilon \to 0^+} \phi_1(x, y_{j+\frac{1}{2}} \pm \epsilon)$, 
and similarly for $(\phi_2)_{i}^{\pm}$ and $(\phi_2)_{j}^{\pm}$.

\subsection{Fully-discrete %{\color{red} (Local)}
OECDG methods}
The fully discrete OECDG method computes numerical solutions at discrete time levels $\{ t_n \}$. Let $\tau = t_{n+1} - t_n$ denote the time step size. The method incorporates a novel OE procedure after each Runge--Kutta (RK) stage. 
Coupled with an $r$th-order, $s$-stage RK method, the fully discrete OECDG method is expressed as
\begin{subequations}
\label{eq:RKCDG}
\begin{align}
    \bm{u}_\sigma^{n,0} &= \bm{u}_\sigma^{n},\\
    \langle \bm{u}_h^{n,l+1} , \boldsymbol{\varphi}_h\rangle  
    &=\sum_{0\leq m\leq l}(c_{lm}\langle \boldsymbol{u}_{\sigma}^{n,m} , \boldsymbol{\varphi}_h\rangle+\tau d_{lm} \mathcal{H}(\boldsymbol{u}_{\sigma}^{n,m} , \boldsymbol{\varphi}_h)),\label{OECDG}\\
    \boldsymbol{u}_{\sigma}^{n,l+1} &= \mathcal{F}_{\tau}\boldsymbol{u}_h^{n,l+1},\quad l = 0,1,...,s-1,\label{OEstep1}\\
    \bm{u}_\sigma^{n+1} &= \bm{u}_\sigma^{n,s},
\end{align}
\end{subequations}
where $\boldsymbol{u}_h^{n,l}$ is the numerical solution at the $l$th stage, and $\boldsymbol{u}_\sigma^{n,l}$ is the solution after applying the OE step \eqref{OEstep1}. The coefficients $c_{lm}$ and $d_{lm}$ are determined by the chosen RK method.

For any $\boldsymbol{u}_h = (u_h^C, u_h^D) \in V_h \times W_h$, the operator $\mathcal{F}_\tau \boldsymbol{u}_h$ represents the solution $\boldsymbol{u}_\sigma(\bm{x}, \hat{t})$ at $\hat{t} = \tau$ of the following damping ODEs with initial condition $\boldsymbol{u}_\sigma(\bm{x}, 0) = \boldsymbol{u}_h$:
\begin{equation}\label{OEstep2D}
    \left\{
    \begin{aligned}
        \frac{\mathrm{d}}{\mathrm{d}\hat{t}}\int_{C} u_{\sigma}^C\varphi_1\mathrm{d} \bm{x}
        +\sum_{m=0}^k {\delta_C^m(\bm{u}_h)}\int_{C}(u_{\sigma}^C-P^{m-1} u_{\sigma}^C)\varphi_1\mathrm{d} \bm{x} = 0,\\
        \frac{\mathrm{d}}{\mathrm{d}\hat{t}}\int_{D} u_{\sigma}^D\varphi_2\mathrm{d} \bm{x} 
        +\sum_{m=0}^k {\delta_D^m(\bm{u}_h)}\int_{D}(u_{\sigma}^D-P^{m-1} u_{\sigma}^D)\varphi_2\mathrm{d} \bm{x} = 0,
    \end{aligned}
    \right.
\end{equation}
where $P^m$ denotes the standard $L^2$ projection into $\mathbb{P}^m(K)$ (with $K = C$ or $D$) for $m \geq 0$, and $P^{-1}$ is defined as $P^0$. 
%\subsubsection*{{\color{red}OECDG damping ceofficients}}
The coefficients $\delta_K^m(\boldsymbol{u}_h)$ are given by
\begin{equation*}
\delta_{K}^m(\bm{u}_h) 
=
\begin{cases}
0, & \sigma_K(\bm{u}_h) = 0,\\
\displaystyle
\sum_{e\in \partial K} \beta_{e,K}\frac{(2m+1)h_{e,K}^{m-1}}{(2k-1)m!\sigma_K(\bm{u}_h)|e|}
\int_e\left(\sum_{|\bm{\alpha}|=m}\big(\partial^{\bm{\alpha}}(u_h^C-u_h^D)\big)^2\right)^{\frac12}\mathrm{d}s,&
\sigma_K(\bm{u}_h) \ne 0,
\end{cases}
\end{equation*}
where $\sigma_K(\bm{u}_h)$ is defined as 
\begin{equation*}
\sigma_{K}(\bm{u}_h) 
=
\left\{
\begin{array}{ll}
\left\|u_h^C-{\rm avg}(u_h^C)\right\|_{L^{\infty}(\Omega)}, & K \in \mathcal{T}_h^C,\\
\left\|u_h^D-{\rm avg}(u_h^D)\right\|_{L^{\infty}(\Omega)}, & K \in \mathcal{T}_h^D,
\end{array}
\right.
\end{equation*}
and ${\rm avg} (f) = \frac{1}{|\Omega|} \int_\Omega f \, d\bm{x}$ is the average over the entire domain $\Omega$, $h_{e,K} = \sup_{\bm{x} \in K} \operatorname{dist}(\bm{x}, e)$.
The coefficient $\beta_{e,K}$ estimates the local maximum wave speed in the direction $\bm{n}_e$. In computations, $\beta_{e,K}$ is chosen as $|\boldsymbol{f}' \cdot \bm{n}_e|$, where $\boldsymbol{f}'$ is evaluated at the cell average of $u_h^C$ on $K \in \mathcal{T}_h^C$, or at the cell average of $u_h^D$ on $K \in \mathcal{T}_h^D$.\begin{remark}
The proposed OE damping operator is novel and distinct from the original OE operator used in the non-central DG schemes \cite{peng2023oedg,lu2021oscillation}. Specifically, our new damping approach is more compact and relies on the difference between the primal and dual CDG solutions, instead of utilizing the jump information at the cell interface. 
    Specifically, for 1D scalar hyperbolic conservation laws, the coefficients $\delta_K^m(\boldsymbol{u}_h)$ for cells $I_i \in \mathcal{T}_h^C$ and $I_{i+\frac{1}{2}} \in \mathcal{T}_h^D$ are 
    \begin{equation}\nonumber
    \begin{aligned}
        \delta_{I_i}^m(\bm{u}_h) &= \frac{(2m+1)h^{m-1}}{(2k-1)m!}
        \frac{\beta_{i+\frac{1}{2}}|\partial_x^m(u_h^C-u_h^D)_{i+\frac{1}{2}}|+
        \beta_{i-\frac{1}{2}}|\partial_x^m(u_h^C-u_h^D)_{i-\frac{1}{2}}|}{\|u_h^C-{\rm avg}(u_h^C)\|_{L^{\infty}(\Omega)}},\\
        \delta_{I_{i+\frac{1}{2}}}^m(\bm{u}_h) &= \frac{(2m+1)h^{m-1}}{(2k-1)m!}
        \frac{\beta_{i+1}|\partial_x^m(u_h^C-u_h^D)_{i+1}|+\beta_i|\partial_x^m(u_h^C-u_h^D)_{i}|}{\|u_h^D-{\rm avg}(u_h^D)\|_{L^{\infty}(\Omega)}}.
    \end{aligned}
    \end{equation}
\end{remark}

\begin{remark}
For hyperbolic systems with $N$ equations, all components share the same coefficient $\delta_K^m$ in the OE procedure \eqref{OEstep1}. It is defined as
    \begin{equation}
        \delta_K^m = \max_{1\leq q\leq N} \delta_{K}^m \left(\bm{u}_h^{(i)}\right)\quad \mbox{with} \quad 
        \bm{u}^{(i)}_h := \left(u_h^{C,(i)}, u_h^{D,(i)}\right),
    \end{equation}
where $u_h^{C,(q)}$ and $u_h^{D,(q)}$ are the $q$-th components of $\boldsymbol{u}_h^C$ and $\boldsymbol{u}_h^D$, respectively. 
    % $$\delta_K^m({\boldsymbol{f_1}},{\boldsymbol{f_2}}) =\max_{1\leq q\leq N} \sum_{e\in \partial K}\beta_{e,K}\frac{(2m+1)h_{e,K}^{m-1}}{(2k-1)m!}\frac{\frac{1}{|e|}\int_e\sqrt{\sum_{|\mathbf{\alpha}|=m}[\partial^{\mathbf{\alpha}}((f_1)_q-(f_2)_q)]^2}\mathrm{~d}s}{\|(f_1)_q-{\rm avg}((f_1)_q)\|_{L^{\infty}(\Omega)}},$$
The coefficient $\beta_{e,K}$ is taken as the spectral radius of the Jacobian matrix $\sum_{i=1}^d n_e^{(i)} \frac{\partial \boldsymbol{f}_i}{\partial \boldsymbol{u}}$, where $n_e^{(i)}$ is the $i$-th component of the unit normal vector $\boldsymbol{n}_e$. 
    % $\boldsymbol{\rho}\cdot \boldsymbol{n}_e$.
    % with $\rho_i$ as the spectral radius of the $i$-th flux function, $(f_1)_q$ and $(f_2)_q$ stands for the $q$-th element of ${\boldsymbol{f_1}}$ and ${\boldsymbol{f_2}}$ respectively.	
\end{remark}

\section{Theoretical analysis}\label{sec:Optimal}

In this section, we first establish the approximate skew-symmetry and weak boundedness of the semi-discrete CDG method, and then analyze the linear stability of the Runge--Kutta CDG schemes, referred to as RKCDG, using the matrix transferring technique \cite{xu20192}. Based on these results, we prove the optimal error estimate for the fully-discrete OECDG method. 
Unless otherwise stated, 
the flux $f(u)$ 
is assumed to be linear 
with respect to $u$. 
Specifically, in the 1D case,  
$f(u)=\beta_1 u$,
and in the 2D case, 
$\bm{f}(u)= (\beta_1 u,\beta_2 u)$, 
where $\beta_1$ and $\beta_2$ are constants. 
For simplicity, we focus on 
periodic boundary conditions, 1D quasi-uniform meshes, and 2D uniform Cartesian meshes.

% \subsection{Skew symmetry and weak boundedness of the CDG discretization}
\subsection{Properties of CDG discretization}
We show that 
the CDG spatial discretization $\mathcal{H}(\boldsymbol{h}, \boldsymbol{g})$ exhibits the 
approximate skew-symmetry 
and weak boundedness,
which are essential for the linear stability analysis
in the next subsection. 

\begin{lemma}\label{lem:skew-symm}
The CDG discretization operator 
$\mathcal{H}(\boldsymbol{h},\boldsymbol{g})$ is approximately skew-symmetric:
\begin{equation*}
    \mathcal{H}(\boldsymbol{h},\boldsymbol{g})+\mathcal{H}(\boldsymbol{g},\boldsymbol{h}) 
    = -\frac{2}{\tau_{\rm max}}\int_{\Omega}(h_1-h_2)(g_1-g_2) 
    \mathrm{d}\boldsymbol{x}.
\end{equation*}
\end{lemma}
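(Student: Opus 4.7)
The plan is to split $\mathcal{H}(\bm{u},\bm{\phi})=\mathcal{D}(\bm{u},\bm{\phi})+\mathcal{C}(\bm{u},\bm{\phi})$, where $\mathcal{D}$ is the damping term $-\frac{1}{\tau_{\max}}\int_\Omega(u_1-u_2)(\phi_1-\phi_2)\,\mathrm{d}\bm{x}$ and $\mathcal{C}$ collects all the convective volume and interface contributions. Since $\mathcal{D}$ is manifestly symmetric in its two arguments, one has $\mathcal{D}(\bm{h},\bm{g})+\mathcal{D}(\bm{g},\bm{h})=2\mathcal{D}(\bm{h},\bm{g})$, which already produces the right-hand side of the lemma. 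The whole task therefore reduces to verifying the skew-symmetry of the convective piece, $\mathcal{C}(\bm{h},\bm{g})+\mathcal{C}(\bm{g},\bm{h})=0$. I will carry this out in 1D and then indicate the obvious analogue via the divergence theorem in 2D.

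For 1D I would denote by $T_1(\bm{u},\bm{\phi})$ the primal convective contribution (the term $\int_\Omega f(u_2)\partial_x\phi_1\,\mathrm{d}x$ minus its associated interface sum) and by $T_2(\bm{u},\bm{\phi})$ the dual counterpart. The key observation driving the argument is the cross-pairing in the CDG formulation: $u_2\in W_h$ is smooth across primal interfaces $x_{i+1/2}$ where $\phi_1$ jumps, and $\phi_1\in V_h$ is smooth across dual interfaces $x_i$ where $u_2$ jumps. I would refine each dual cell $[x_i,x_{i+1}]$ into the two subintervals $[x_i,x_{i+1/2}]$ and $[x_{i+1/2},x_{i+1}]$, on which both $u_2$ and $\phi_1$ are smooth, perform integration by parts on each subinterval, and sum. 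The boundary contributions at the primal interfaces $x_{i+1/2}$ exactly cancel the interface terms already present in $T_1$, leaving the single-meshed identity
\begin{equation*}
T_1(\bm{u},\bm{\phi}) = -\sum_i \jp{f(u_2)}_{x_i}\phi_1(x_i) - \int_\Omega f'(u_2)\,\partial_x u_2\,\phi_1\,\mathrm{d}x,
\end{equation*}
and by the mirror argument
\begin{equation*}
T_2(\bm{u},\bm{\phi}) = -\sum_i \jp{f(u_1)}_{x_{i+1/2}}\phi_2(x_{i+1/2}) - \int_\Omega f'(u_1)\,\partial_x u_1\,\phi_2\,\mathrm{d}x.
\end{equation*}

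The final step uses the linearity $f(u)=\beta_1 u$ to combine the cross pair $T_1(\bm{h},\bm{g})+T_2(\bm{g},\bm{h})$: one reapplies integration by parts on the same subcell refinement to rewrite $\int_\Omega[\partial_x h_2\cdot g_1+h_2\cdot\partial_x g_1]\mathrm{d}x$ as $-\sum_i \jp{h_2}_{x_i}g_1(x_i)-\sum_i h_2(x_{i+1/2})\jp{g_1}_{x_{i+1/2}}$, which cancels the two jump sums coming from $T_1$ and $T_2$, yielding $T_1(\bm{h},\bm{g})+T_2(\bm{g},\bm{h})=0$. The symmetric pair $T_1(\bm{g},\bm{h})+T_2(\bm{h},\bm{g})$ vanishes in the same way, so $\mathcal{C}(\bm{h},\bm{g})+\mathcal{C}(\bm{g},\bm{h})=0$. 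In 2D, the identical strategy applies on the tensor subrectangles $K\cap K'$ with $K\in\mathcal{T}_h^C$, $K'\in\mathcal{T}_h^D$: use the divergence theorem on each subrectangle, note that $u_h^D$ is continuous across edges of primal cells interior to a dual cell and vice versa, and again invoke linearity of $\bm{f}$ to match the jump sums against the mixed-derivative volume identity.

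I expect the main obstacle to be purely organizational: the primal and dual meshes overlap but are distinct, so one must be scrupulous about which function is single-valued at which interface, which jump signs appear when re-indexing, and how each boundary contribution produced by integration by parts is matched against the explicit interface sums in \cref{eq:1D_H} (and the edge integrals in \cref{eq:2D_H}). The conceptual content of the proof is the structural observation above; once that is clarified, everything else is controlled bookkeeping enabled by the linearity of $f$.
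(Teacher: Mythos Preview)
Your proposal is correct and uses essentially the same idea as the paper: refine into the common sub-cells $K\cap K'$ (in 1D, the half-intervals $[x_{i-1/2},x_i]$ and $[x_i,x_{i+1/2}]$), integrate by parts there using the continuity of the cross-mesh function, and invoke the linearity $f(u)=\beta_1 u$ to close the cancellation. The only difference is organizational: you first rewrite $T_1$ and $T_2$ individually via integration by parts and then pair $T_1(\bm{h},\bm{g})$ with $T_2(\bm{g},\bm{h})$ using a second integration by parts, whereas the paper adds $\mathcal{H}(\bm{h},\bm{g})+\mathcal{H}(\bm{g},\bm{h})$ first and recognizes the convective integrand directly as $\partial_x(h_2g_1+h_1g_2)$ on each sub-cell, so a single integration by parts suffices. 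Both routes lead to the same cancellations; the paper's is slightly more economical, while yours makes the role of each interface term more explicit.
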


\begin{proof}

We decompose each cell integral in \eqref{eq:1D_H}  and \eqref{eq:2D_H} into integrals over the sub-cells split by the dual cells. 
Noting the continuity of $\boldsymbol{h}$ 
and $\boldsymbol{g}$ on each dual region, we employ integration by parts 
and cancel the non-dissipative terms. Specifically, in the 1D case, we obtain 
\begin{equation}\nonumber
\begin{aligned}
&\mathcal{H}(\boldsymbol{h},\boldsymbol{g})+\mathcal{H}(\boldsymbol{g},\boldsymbol{h}) 
+\frac{2}{\tau_{\rm max}}\int_{\Omega}(h_1-h_2)(g_1-g_2)\mathrm{d}x
\\=& \beta_1 \sum_{i} \sum_{s_1 \in \left\{0,\frac{1}{2}\right\}}
\bigg(\int_{x_{i-\frac{1}{2}+s_1}}^{x_{i+s_1}}
\partial_x \left(h_2 g_1 + h_1 g_2\right)\mathrm{d}x
-(h_2g_1+g_2h_1)_{i+s_1}^-+(h_2g_1+g_2h_1)_{i-\frac{1}{2}+s_1}^+ \bigg)\\
% &\sum_{i} \beta_1 \bigg(\int_{x_i}^{x_{i+\frac{1}{2}}}
% \partial_x (h_2 g_1 + h_1 g_2)\mathrm{d}x
% -(h_2g_1+g_2h_1)_{i+\frac{1}{2}}^-+(h_2g_1+g_2h_1)_{i}^+\bigg)\\
=&\,0.
% \\=&\sum_{i=1}^N[\int_{I_i^1}\partial_x(h_2g_1+h_1g_2)\mathrm{~d}x-(h_2g_1+g_2h_1)_{i}^-+(h_2g_1+g_2h_1)_{i-\frac{1}{2}}^+]+
% \\&\sum_{i=1}^N[\int_{I_i^2}\partial_x(h_2g_1+h_1g_2)\mathrm{~d}x-(h_2g_1+g_2h_1)_{i+\frac{1}{2}}^-+(h_2g_1+g_2h_1)_{i}^+] =0.
\end{aligned}
\end{equation}
Similarly, in the 2D case, we have 
\begin{equation}\nonumber
\begin{aligned}
&\mathcal{H}(\boldsymbol{h},\boldsymbol{g})+\mathcal{H}(\boldsymbol{g},\boldsymbol{h}) +\frac{2}{\tau_{\max}}\int_{\Omega} (h_1-h_2)(g_1-g_2)\mathrm{d} \bm{x} \\
=&\sum_{i,j} \sum_{s_1,s_2 \in \left\{0,\frac{1}{2}\right\}} 
\bigg(
\int_{y_{j-\frac{1}{2}+s_2}}^{y_{j+s_2}}
\int_{x_{i-\frac{1}{2}+s_1}}^{x_{i+s_1}}
\bm{\beta} \cdot \nabla(h_2g_1+h_1g_2) \,\mathrm{d}x \mathrm{d}y
-\beta_1
\int_{y_{j-\frac{1}{2}+s_2}}^{y_{j+s_2}}
(h_2g_1+h_1g_2)_{i+s_1}^- \\& - (h_2g_1+h_1g_2)_{i-\frac{1}{2}+s_1}^+ \mathrm{d} y
-\beta_2
\int_{x_{i-\frac{1}{2}+s_1}}^{x_{i+s_1}}
(h_2g_1+h_1g_2)_{j+s_2}^- - (h_2g_1+h_1g_2)_{j-\frac{1}{2}+s_2}^+ \mathrm{d} x\bigg)\\
=&\,0.
\end{aligned}
\end{equation}
The proof is completed.
\end{proof}

\begin{remark}
As a direct corollary of \Cref{lem:skew-symm}, setting $\boldsymbol{g} = \boldsymbol{h} = \boldsymbol{u}_h$ gives the $L^2$-boundedness for the semi-discrete CDG method:
\begin{equation*}
\partial_t \left(\frac{1}{2} \|\boldsymbol{u}_h\|^2\right)
=
\mathcal{H}(\boldsymbol{u}_h, \boldsymbol{u}_h) 
= -\frac{1}{\tau_{\rm max}} \int_{\Omega} \left(u_h^C - u_h^D\right)^2 \, \mathrm{d} \bm{x} \leq 0,
\end{equation*}
which has been previously established in \cite{liu2008stability}.
\end{remark}

\begin{lemma}\label{Hcontinuous}
The CDG discretization operator is weakly bounded
in the sense that
\begin{equation}\label{eq:H_bounded}
\left|\mathcal{H}(\boldsymbol{h},\boldsymbol{g})\right|\leq Ch^{-1}\left\|\boldsymbol{h}\right\|\left\| \boldsymbol{g}\right\| 
\quad \forall \bm{h}, \bm{g} \in V_h \times W_h, 
\end{equation}
where the constant $C > 0$ is independent of $h$ and of $\boldsymbol{h}$ and $\boldsymbol{g}$.
\end{lemma}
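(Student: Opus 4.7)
The plan is to decompose $\mathcal{H}(\boldsymbol{h},\boldsymbol{g})$ into three structurally distinct contributions and bound each by a multiple of $h^{-1}\|\boldsymbol{h}\|\|\boldsymbol{g}\|$. Reading off the 1D formula \eqref{eq:1D_H} (and analogously \eqref{eq:2D_H} in 2D), the three pieces are: (i) the dual dissipation term $-\tau_{\max}^{-1}\int_\Omega(h_1-h_2)(g_1-g_2)\,\mathrm{d}\bm{x}$; (ii) the volume flux term $\int_\Omega f(h_2)\partial_x g_1 + f(h_1)\partial_x g_2\,\mathrm{d}x$; and (iii) the interface trace sum. Because the flux is linear, $|f(u)|\leq C|u|$, so each flux evaluation can be replaced by $|\beta_1||u|$ throughout.

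For (i), the key observation is that $\tau_{\max}=C_{\rm CFL}\,h/\max|f'(u)|$, hence $\tau_{\max}^{-1}\leq Ch^{-1}$; a single Cauchy--Schwarz inequality together with the triangle inequality $\|h_1-h_2\|\leq \|\boldsymbol{h}\|+\|\boldsymbol{h}\|$ yields the claimed bound. For (ii), I would invoke the standard inverse inequality on polynomials, $\|\partial_x\phi\|_{L^2(K)}\leq Ch_K^{-1}\|\phi\|_{L^2(K)}$ for $\phi\in\mathbb{P}^k(K)$, applied cell-by-cell on the primal and dual meshes. Summing over cells and applying Cauchy--Schwarz gives $|\int_\Omega f(h_2)\partial_x g_1\,\mathrm{d}x|\leq |\beta_1|\,\|h_2\|\,\|\partial_x g_1\|\leq Ch^{-1}\|h_2\|\|g_1\|$, and similarly for the symmetric term.

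For (iii), the key tool is the discrete trace-inverse inequality for polynomials, $|\phi(x_\star)|\leq Ch_K^{-1/2}\|\phi\|_{L^2(K)}$ at any cell endpoint. Applying this to $h_2$ at $x_{i\pm 1/2}$ (whose two one-sided values come from dual cells of diameter $\sim h$) and to $g_1$ at $x_{i\pm 1/2}^\pm$ (from primal cells), each summand in the interface sum is controlled by $Ch^{-1}\|h_2\|_{D}\|g_1\|_{C}$ for appropriate neighbouring dual cell $D$ and primal cell $C$. A discrete Cauchy--Schwarz inequality over $i$ then folds these local estimates into the global bound $Ch^{-1}\|h_2\|\|g_1\|$; the analogous argument handles the $f(h_1)$ boundary sum over the dual mesh. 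Assembling the three pieces gives \eqref{eq:H_bounded}.

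In 2D the same three-part decomposition applies to \eqref{eq:2D_H}: the volume integral is handled by the inverse inequality $\|\nabla\phi\|_{L^2(K)}\leq Ch_K^{-1}\|\phi\|_{L^2(K)}$, while each one-dimensional interface integral is bounded by the edge-trace inequality $\|\phi\|_{L^2(e)}\leq Ch_K^{-1/2}\|\phi\|_{L^2(K)}$ combined with Cauchy--Schwarz along the edge. The only real bookkeeping subtlety---and the main obstacle I anticipate---is making sure that in the interface sums each trace value is associated to the \emph{correct} cell on the \emph{correct} (primal or dual) mesh, since the primal-mesh traces of $g_2\in W_h$ and the dual-mesh traces of $g_1\in V_h$ are interior evaluations of smooth polynomials rather than one-sided limits. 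Once this matching is tracked carefully, the trace-inverse inequality applies uniformly and the summation over all interfaces contributes at most $C$ copies of $\|\boldsymbol{h}\|\|\boldsymbol{g}\|$ with an extra factor $h^{-1}$, completing the proof.
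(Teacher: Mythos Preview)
Your proposal is correct and follows essentially the same approach as the paper: a three-part decomposition (dissipation, volume, interface) handled by Cauchy--Schwarz together with inverse and trace inequalities. The paper streamlines the bookkeeping you flag as the main subtlety by first telescoping the interface sum into jump form $\sum_i\bigl((h_2)_{i+1/2}\jp{g_1}_{i+1/2}+(h_1)_i\jp{g_2}_i\bigr)$ and then introducing the common refinement space $\mathbb{V}_h$ of piecewise $\mathbb{P}^k$ functions on the half-cells $[x_{i-1/2},x_i]$ and $[x_i,x_{i+1/2}]$; since $h_1,h_2,g_1,g_2$ all belong to $\mathbb{V}_h$, the inverse and trace inequalities apply uniformly without having to match each trace to its primal or dual cell.
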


\begin{proof}
For the 1D case, $\mathcal{H}(\boldsymbol{h}, \boldsymbol{g})$ can be reformulated as
\begin{align*}
\mathcal{H}(\boldsymbol{h}, \boldsymbol{g}) = 
& -\frac{1}{\tau_{\max}} \int_{\Omega} (h_1 - h_2)(g_1 - g_2) \, \mathrm{d}x 
+ \beta_1 \int_{\Omega} h_2 \partial_x g_1 + h_1 \partial_x g_2 \, \mathrm{d}x \\
& + \beta_1 \sum_i \left((h_2)_{i+\frac{1}{2}} \jump{g_1}_{i+\frac{1}{2}} + (h_1)_{i} \jump{g_2}_{i} \right),
\end{align*}
where the jump $\jump{f}_{i+\frac{1}{2}} := f_{i+\frac{1}{2}}^+ - f_{i+\frac{1}{2}}^-$. Applying Cauchy--Schwarz yields
\begin{align*}
|\mathcal{H}(\boldsymbol{h}, \boldsymbol{g})| \leq 
& \frac{1}{\tau_{\max}} \|h_1 - h_2\| \|g_1 - g_2\| + |\beta_1| \left(\|h_2\| \|\partial_x g_1\| + \|h_1\| \|\partial_x g_2\|\right) \\
& + |\beta_1| \left(\sqrt{\sum_i (h_2)_{i+\frac{1}{2}}^2} \sqrt{\sum_i \jump{g_1}_{i+\frac{1}{2}}^2} + \sqrt{\sum_i (h_1)_{i}^2} \sqrt{\sum_i \jump{g_2}_{i}^2}\right).
\end{align*}
Note that $1/\tau_{\max} = |\beta_1| C_{CFL}^{-1} h^{-1}$, $\|h_1-h_2\| \le \sqrt{2}\|\bm{h}\|$
and $\|g_1-g_2\| \le \sqrt{2}\|\bm{g}\|$, one has 
$\frac{1}{\tau_{\max}} \|h_1-h_2\| \|g_1-g_2\| \le 2|\beta_1| C_{CFL}^{-1}h^{-1} \|\bm{h}\|\|\bm{g}\|$.
By the inverse inequalities on 
\begin{equation}\nonumber
\mathbb{V}_h:=\big\{\phi \in L^2(\Omega): 
\phi\big|_{[x_{i-\frac{1}{2}}, x_i]} \in \mathbb{P}^k([x_{i-\frac{1}{2}}, x_i]),~~
\phi\big|_{[x_{i}, x_{i+\frac{1}{2}}]} \in \mathbb{P}^k([x_{i}, x_{i+\frac{1}{2}}])\,\,
\forall i
\big\},
\end{equation}
we have for any $p \in \mathbb{V}_h$, there exist constants $\mu_s>0$ ($s=1,2$) 
independent of $p$ and $h$ such that
\begin{equation*}
    \|\partial_x p\| \le \mu_1 h^{-1} \|p\|,\quad
    \|p\|_{\Gamma_h} \le \mu_2 h^{-1/2} \|p\|,
\end{equation*}
where
\begin{equation*}
    \|p\|_{\Gamma_h}^2 := 
    \sum_{i} 
    \left(
        (p^{+}_{i-\frac{1}{2}})^2+
        (p^{-}_{i})^2+
        (p^{+}_{i})^2+
        (p^{-}_{i+\frac{1}{2}})^2
    \right).
\end{equation*}
Since $h_1, h_2, g_1, g_2 \in \mathbb{V}_h$, by applying the inverse inequalities,
for $k=1,2$, one has
$\|\partial_x g_k\| \le \mu_1 h^{-1} \|g_k\|$,
$\sqrt{\sum_{i} (h_k)_{i+\frac{1}{2}}^2} \le \mu_2 h^{-1/2} \|h_k\| $,
and 
$\sqrt{\sum_{i}\jump{g_k}_{i+\frac{1}{2}}^2} \le \mu_2 h^{-1/2}\sqrt{2} \|g_k\|$.
Therefore,
we conclude that
$
\left|\mathcal{H}(\bm{h},\bm{g})\right|
\le Ch^{-1}\left\|\bm{h}\right\|\left\| \bm{g}\right\|
$.

The analysis can be extended to the 2D case.
The CDG discretization operator 
$\mathcal{H}(\bm{h},\bm{g})$ can be rewritten as
\begin{align*}
\mathcal{H}(\bm{h},\bm{g})= 
&-\frac{1}{\tau_{\max }} \int_{\Omega}(h_1-h_2)(g_1-g_2) \mathrm{d} \bm{x}+
\int_{\Omega} 
\beta_1(h_2 \partial_x g_1+ h_1 \partial_x g_2)+
\beta_2(h_2 \partial_y g_1+ h_1 \partial_y g_2)
\mathrm{d}\bm{x}\\ 
&+\beta_1\int_{y_{\min}}^{y_{\max}} 
\sum_i\left(
(h_2)_{i-\frac{1}{2}}\jump{g_1}_{i-\frac{1}{2}}  
+ (h_1)_{i}\jump{g_2}_{i} 
\right)
\mathrm{d}y\\
&+\beta_2\int_{x_{\min}}^{x_{\max}} 
\sum_{j}\left(
(h_2)_{j-\frac{1}{2}}\jump{g_1}_{j-\frac{1}{2}} + (h_1)_{j}\jump{g_2}_{j} 
\right)
\mathrm{d}x.
\end{align*}
Note that
\begin{align*}
&\int_{y_{\min}}^{y_{\max}}
\left[
\int_{x_{\min}}^{x_{\max}}
h_2 \partial_x g_1+ h_1 \partial_x g_2 \mathrm{d}x
+
\sum_i\left(
(h_2)_{i-\frac{1}{2}}\jump{g_1}_{i-\frac{1}{2}}  
+ (h_1)_{i}\jump{g_2}_{i} 
\right) 
\right]
\mathrm{d} y\\
\le&\,
Ch^{-1}
\int_{y_{\min}}^{y_{\max}}
\left[
\left(\int_{x_{\min}}^{x_{\max}}h_2^2 \mathrm{d}x\right)^{\frac{1}{2}}
\left(\int_{x_{\min}}^{x_{\max}}g_1^2 \mathrm{d}x\right)^{\frac{1}{2}}
\right]
\mathrm{d} y
\le 
Ch^{-1} \|h_2\| \|g_1\|,
\end{align*}
where the first inequality is derived similarly to the 1D case, and the second follows from the Cauchy--Schwarz inequality. These lead to  $\left|\mathcal{H}(\bm{h},\bm{g})\right| \le Ch^{-1}\|\bm{h}\| \|\bm{g}\|$.
\end{proof}

\subsection{Linear stability of RKCDG method}
We now prove the linear stability of the RKCDG method,
namely \eqref{eq:RKCDG} with $\mathcal{F}_\tau$ as the identity operator,
by following the standard techniques in \cite{xu20192, sun2019strong}.

\begin{definition}\label{stage_diff}
    The $i$-th temporal difference operator $\mathbb{D}_i$, mapping $V_h\times W_h$ to $V_h\times W_h$, is defined recursively as
    \begin{equation}\label{timedif_D}
        \begin{aligned}
            \mathbb{D}_0\boldsymbol{u}_h &= \boldsymbol{u}_h,\\
            \langle\mathbb{D}_i \bm{u}_h,\boldsymbol{\varphi}_h 
            \rangle &= \tau \mathcal{H}(\mathbb{D}_{i-1} \boldsymbol{u}_h, \boldsymbol{\varphi}_h),\quad \forall \boldsymbol{\varphi}_h\in V_h\times W_h, \, i \geq 1.
        \end{aligned}
    \end{equation}
\end{definition}

Using $\mathbb{D}_i$, we can rewrite the RKCDG method, expressing $\bm{u}_h^{n+m}$ as a linear combination of the terms $\mathbb{D}_i \bm{u}_h^n$. Specifically, for any integer $m \geq 0$, there exist constants $\alpha_0, \alpha_1, \dots, \alpha_{ms}$ with $\alpha_0>0$ such that
\begin{equation}\label{eq:evolution_identity}
\alpha_0 \bm{u}_h^{n+m} = 
\sum_{0 \le i \le ms} \alpha_i \mathbb{D}_i \bm{u}_h^n.
\end{equation}
Typically, $m$ is set to $1$, except when combining multiple time steps for stability analysis.

Taking the $L^2$ norm of both sides of \eqref{eq:evolution_identity}, we derive the energy equation for stability analysis:
\begin{equation}\label{energy}
    \alpha_0^2 \left(\|\bm{u}_h^{n+m}\|^2 - \|\bm{u}_h^{n}\|^2\right)
    = \sum_{0 \leq i,j \leq ms, \, i+j>0} \alpha_i \alpha_j \left\langle \mathbb{D}_{i}\bm{u}_h^n, \mathbb{D}_{j} \boldsymbol{u}_h^n \right\rangle.
\end{equation}
By applying \eqref{timedif_D}, it follows that
$
\left\langle \mathbb{D}_{i}\bm{u}_h^n, \mathbb{D}_{j} \boldsymbol{u}_h^n \right\rangle
=
\tau \mathcal{H}(\mathbb{D}_{i-1}\bm{u}_h^n, \mathbb{D}_{j} \boldsymbol{u}_h^n),
$ 
and the right-hand side of \eqref{energy} can be expressed as
\begin{equation*}
\sum_{0\le i,j \le ms} a_{ij} \left\langle \mathbb{D}_{i}\bm{u}_h^n, \mathbb{D}_{j} \boldsymbol{u}_h^n \right\rangle
+
\sum_{0\le i,j \le ms} b_{ij} \tau \mathcal{H}(\mathbb{D}_{i}\bm{u}_h^n, \mathbb{D}_{j} \boldsymbol{u}_h^n).
\end{equation*}

The coefficients of $\langle \mathbb{D}_{i}\boldsymbol{u}_h^n,\mathbb{D}_{j}\boldsymbol{u}_h^n \rangle$ 
and $\tau\mathcal{H}(\mathbb{D}_{i}\boldsymbol{u}_h^n,\mathbb{D}_{j}\boldsymbol{u}_h^n)$ are encoded in matrices $\mathbb{A} = \{a_{ij}\}$ and $\mathbb{B} = \{b_{ij}\}$, respectively. Initially, $a_{00}=0$, $a_{ij}=\alpha_i \alpha_j$ if $i+j>0$, and $b_{ij}=0$. For $i > j+1$,
$$\langle \mathbb{D}_{i+1}\boldsymbol{u}_h,\mathbb{D}_{j}\boldsymbol{u}_h\rangle 
+\langle \mathbb{D}_{i}\boldsymbol{u}_h,\mathbb{D}_{j+1}\boldsymbol{u}_h\rangle = 
\tau(\mathcal{H}(\mathbb{D}_{i}\boldsymbol{u}_h,\mathbb{D}_{j}\boldsymbol{u}_h)
+\mathcal{H}(\mathbb{D}_{j}\boldsymbol{u}_h,\mathbb{D}_{i}\boldsymbol{u}_h)).$$ 
The matrices $\mathbb{A}$ and $\mathbb{B}$ 
are transferred using the following inductive relations: 
$$a_{ij}^{(\ell+1)} = \left\{\begin{aligned}
	&0, &\quad &0\leq j\leq \ell,\\
	&a_{ij}^{(\ell)}-2a_{i+1,j-1}^{(\ell)}, &\quad &(i,j) = (\ell+1,\ell+1),\\
	&a_{ij}^{(\ell)}-a_{i+1,j-1}^{(\ell)},&\quad &\ell+2\leq i\leq ms-1 \ {\rm and }\ j = \ell+1,\\
	&a_{ij}^{(\ell)}, &\quad &{\rm otherwise},
\end{aligned}\right.$$
and
$$b_{ij}^{(\ell+1)} = \left\{\begin{aligned}
    &2a_{i+1,j}^{(\ell)},&\quad &\ell\leq i\leq ms-1 \ {\rm and }\ j = \ell,\\
    &b_{ij}^{(\ell)}, &\quad &{\rm otherwise}. 
\end{aligned}\right.$$
The matrices after $\ell$ transfers are denoted as 
$\mathbb{A}^{(\ell)} = \{a_{ij}^{(\ell)}\}$ and
$\mathbb{B}^{(\ell)} = \{b_{ij}^{(\ell)}\}$.

The transfer halts when $a_{\zeta\zeta}^{(\zeta)} \neq 0$, with $\zeta$ called the termination index. Let $\mathbb{B}^{(\zeta)}_{\kappa}$ denote the $(\kappa+1)$-th order leading principal submatrix of $\mathbb{B}^{(\zeta)}$. Define the contribution index $\rho := \max \{\kappa: 0 \leq \kappa \leq \zeta \text{ and } \mathbb{B}^{(\zeta)}_{\kappa-1} \text{ is positive definite}\}$. Similar to the discussion in \cite{xu20192}, we can show that there exist polynomials $\mathbb{Q}_1$ and $\mathbb{Q}_2$ with nonnegative coefficients such that
\begin{equation*}
    \alpha_0^2 \left(\|\bm{u}_h^{n+m}\|^2 - \|\bm{u}_h^{n}\|^2\right)
    \le \mathbb{Y}_1 + \mathbb{Y}_2
\end{equation*}
with
$$\begin{aligned}
    \mathbb{Y}_1 &= \big(a_{\zeta\zeta}^{(\zeta)}+\lambda\mathbb{Q}_1(\lambda)+\lambda\mathbb{Q}_2(\lambda)\big)\|\mathbb{D}_{\zeta}\boldsymbol{u}_h^n\|^2+\lambda\mathbb{Q}_2(\lambda)\|\mathbb{D}_{\rho}\boldsymbol{u}_h^n\|^2,\\
    \mathbb{Y}_2 & = -\frac{\epsilon\tau}{2\tau_{\max}}\sum_{0\leq i\leq \rho-1} \int_{\Omega} \Big((\mathbb{D}_i\boldsymbol{u}_h^n)^C-(\mathbb{D}_i\boldsymbol{u}_h^n)^D\Big)^2 \mathrm{d}\bm{x},
\end{aligned}$$
where $\epsilon$ is the smallest eigenvalue of $\mathbb{B}^{(\zeta)}_{\rho-1}$,
$\gamma:=\tau/h$, and $(\mathbb{D}_i\boldsymbol{u}_h^n)^C$ and $(\mathbb{D}_i\boldsymbol{u}_h^n)^D$
denote the first and second components of $\mathbb{D}_i\boldsymbol{u}_h^n$. 
 We now state the stability theorem for the RKCDG method.

\begin{theorem}\label{stability}
    For the $r$th-order $s$-stage RKCDG method, the following stability properties hold:
    \begin{enumerate}
        \item If $\rho = \zeta$ and $a_{\zeta\zeta}^{(\zeta)} < 0$, the RKCDG method is monotonically stable. Specifically, there exists a constant $C>0$ such that $\| \boldsymbol{u}_h^{n+m}\| \leq \| \boldsymbol{u}_h^{n}\|$ for $\tau \le Ch$.
    
        \item If $\rho = \zeta$ and $a_{\zeta\zeta}^{(\zeta)} > 0$, the RKCDG method is weak$(2\zeta)$ stable. This means that for sufficiently small $\lambda$, there exists a constant $C>0$ such that $\|\boldsymbol{u}_h^{n+m}\|^2 \leq (1+C \tau) \|\boldsymbol{u}_h^{n}\|^2$ when $\tau \le h^{1+\frac{1}{2\zeta-1}}$.
        
        \item If $\rho < \zeta$, the RKCDG method is weak$(2\rho+1)$ stable. This means that for sufficiently small $\lambda$, there exists a constant $C>0$ such that $\| \boldsymbol{u}_h^{n+m}\|^2 \leq (1+C\tau) \| \boldsymbol{u}_h^{n}\|^2$ when $\tau \leq h^{1+\frac{1}{2\rho}}$.
    \end{enumerate} 
\end{theorem}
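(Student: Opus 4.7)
The plan is to exploit the already-established energy decomposition $\alpha_0^2(\|\bm{u}_h^{n+m}\|^2 - \|\bm{u}_h^n\|^2) \le \mathbb{Y}_1 + \mathbb{Y}_2$ by combining it with an inverse-in-time bound on the temporal difference operators and a case-by-case analysis of the sign and magnitude of $\mathbb{Y}_1$. Writing $\lambda := \tau/h$ throughout, I would first establish by induction on $i$ the estimate
\begin{equation*}
\|\mathbb{D}_i \bm{u}_h\| \le C \lambda^i \|\bm{u}_h\| \qquad \forall i \ge 0.
\end{equation*}
Indeed, taking $\bm{\varphi}_h = \mathbb{D}_i \bm{u}_h$ in the defining identity $\langle \mathbb{D}_i \bm{u}_h, \bm{\varphi}_h\rangle = \tau \mathcal{H}(\mathbb{D}_{i-1}\bm{u}_h, \bm{\varphi}_h)$ and invoking the weak boundedness from \Cref{Hcontinuous} gives $\|\mathbb{D}_i \bm{u}_h\|^2 \le C \tau h^{-1}\|\mathbb{D}_{i-1}\bm{u}_h\|\|\mathbb{D}_i \bm{u}_h\|$, so $\|\mathbb{D}_i \bm{u}_h\| \le C\lambda \|\mathbb{D}_{i-1}\bm{u}_h\|$, and iteration concludes.

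For case (1), where $\rho = \zeta$, the two $\|\mathbb{D}_\zeta \bm{u}_h^n\|^2$ coefficients in $\mathbb{Y}_1$ merge into $a_{\zeta\zeta}^{(\zeta)} + \lambda\bigl(\mathbb{Q}_1(\lambda) + 2\mathbb{Q}_2(\lambda)\bigr)$. Because $\mathbb{Q}_1, \mathbb{Q}_2$ are fixed polynomials with nonnegative coefficients, the perturbation $\lambda\bigl(\mathbb{Q}_1(\lambda) + 2\mathbb{Q}_2(\lambda)\bigr)$ tends to zero as $\lambda \to 0^+$; therefore there exists $C > 0$ such that, for $\tau \le Ch$, the combined coefficient is strictly negative and $\mathbb{Y}_1 \le 0$. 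Since $\mathbb{Y}_2 \le 0$ by construction, we conclude $\|\bm{u}_h^{n+m}\| \le \|\bm{u}_h^n\|$, the claimed monotone stability.

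For cases (2) and (3), I would discard $\mathbb{Y}_2 \le 0$ and control $\mathbb{Y}_1$ via the inverse-in-time estimate. In case (2) ($\rho = \zeta$), $\mathbb{Y}_1 \le C \|\mathbb{D}_\zeta \bm{u}_h^n\|^2 \le C \lambda^{2\zeta}\|\bm{u}_h^n\|^2$, and requiring $\lambda^{2\zeta} \le C\tau$ unpacks to the CFL condition $\tau \le C h^{1+1/(2\zeta-1)}$, which yields $\|\bm{u}_h^{n+m}\|^2 \le (1 + C\tau)\|\bm{u}_h^n\|^2$ and hence weak$(2\zeta)$ stability. In case (3), the assumption $\rho < \zeta$ gives $2\rho+1 \le 2\zeta-1 < 2\zeta$, so for small $\lambda$ the term $\lambda \mathbb{Q}_2(\lambda)\|\mathbb{D}_\rho \bm{u}_h^n\|^2 \le C\lambda^{2\rho+1}\|\bm{u}_h^n\|^2$ dominates the $\lambda^{2\zeta}\|\bm{u}_h^n\|^2$ contribution; demanding $\lambda^{2\rho+1} \le C\tau$ translates into $\tau \le C h^{1+1/(2\rho)}$, producing weak$(2\rho+1)$ stability by the same Gronwall-type argument.

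The main obstacle is not any single hard calculation but the uniformity bookkeeping: one must verify that the inverse-in-time bound holds with a constant independent of the stage index and of $h$, and that in case (1) the strict negativity of $a_{\zeta\zeta}^{(\zeta)}$ survives the $\lambda$-perturbation uniformly in the mesh. This latter point relies on the polynomials $\mathbb{Q}_1, \mathbb{Q}_2$ being determined purely by the RK tableau and the combinatorics of the matrix transfer in \cite{xu20192}, hence independent of $h$, so that the threshold $\lambda$ below which the coefficient becomes negative is a fixed number. Once these uniformity facts are in place, the three conclusions follow by the elementary manipulations above.
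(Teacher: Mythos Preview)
Your proposal is correct and follows essentially the same route as the paper's proof: the inverse-in-time bound $\|\mathbb{D}_i\bm{u}_h\|\le C\lambda\|\mathbb{D}_{i-1}\bm{u}_h\|$ derived from weak boundedness, the continuity argument in case~(1) forcing the $\lambda$-perturbed coefficient of $\|\mathbb{D}_\zeta\bm{u}_h^n\|^2$ to remain negative for small $\lambda$, and the $\lambda^{2\zeta}$ (resp.\ $\lambda^{2\rho+1}$) bounds on $\mathbb{Y}_1$ in cases~(2) and~(3) are exactly the steps the paper carries out. The only cosmetic difference is that in case~(3) the paper first converts the $\|\mathbb{D}_\zeta\bm{u}_h^n\|^2$ term to a multiple of $\|\mathbb{D}_\rho\bm{u}_h^n\|^2$ before passing to $\|\bm{u}_h^n\|^2$, whereas you bound each term in $\|\bm{u}_h^n\|^2$ directly; both give the same estimate.
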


\begin{proof}
If $a_{\zeta\zeta}^{(\zeta)} < 0$, then $\|\boldsymbol{u}_h^{n+m}\|^2 - \|\boldsymbol{u}_h^{n}\|^2 \leq 0$ holds for $\lambda = 0$. Since the polynomials involved are smooth and have a finite number of roots, there exists a constant $C$ such that $\|\boldsymbol{u}_h^{n+m}\|^2 - \|\boldsymbol{u}_h^{n}\|^2 \leq 0$ for $\lambda < C$. This proves the first result.

Note that $\|\mathbb{D}_i \bm{u}^n_h \| \le C \lambda \|\mathbb{D}_{i-1} \bm{u}^n_h\|$ for $i>0$, which follows from \eqref{eq:H_bounded} and \eqref{timedif_D}.
When $\rho = \zeta$, we have
\begin{equation*}
\mathbb{Y}_1 \leq \lambda^{2\zeta} 
(|a_{\zeta\zeta}^{(\zeta)}| + \lambda\mathbb{Q}_1(\lambda) + 2\lambda\mathbb{Q}_2(\lambda)) \|\mathbb{D}_{0}\boldsymbol{u}_h^n\|^2 \leq C\lambda^{2\zeta}\|\boldsymbol{u}_h^n\|^2,
\end{equation*}
for sufficiently small $\lambda$. Since $\mathbb{Y}_2 \leq 0$, we have $\|\boldsymbol{u}_h^{n+m}\|^2 \leq (1+C\lambda^{2\zeta})\|\boldsymbol{u}_h^{n}\|^2$. Since $\tau \leq h^{1+\frac{1}{2\zeta-1}}$ is equivalent to $\lambda^{2\zeta} < \tau$, it follows that $\|\boldsymbol{u}_h^{n+m}\|^2 \leq (1+C\tau)\|\boldsymbol{u}_h^{n}\|^2$. This proves the second result.

If $\rho < \zeta$, then for sufficiently small $\lambda$:
\begin{equation*}
\mathbb{Y}_1 \leq \big[\lambda^{2\zeta-2\rho-1} (|a_{\zeta\zeta}^{(\zeta)}| + \lambda\mathbb{Q}_1(\lambda) + \lambda\mathbb{Q}_2(\lambda)) + \mathbb{Q}_2(\lambda)\big]\lambda\|\mathbb{D}_{\rho}\boldsymbol{u}_h^n\|^2 \leq C\lambda^{2\rho+1}\|\boldsymbol{u}_h^n\|^2.
\end{equation*}
Since $\tau \leq h^{1+\frac{1}{2\rho}}$ is equivalent to $\lambda^{2\rho+1} < \tau$, the result follows: $\|\boldsymbol{u}_h^{n+m}\|^2 \leq (1+C\tau)\|\boldsymbol{u}_h^{n}\|^2$. This completes the proof of the third result.
\end{proof}

\begin{remark}[Stability for RKCDG method with source terms]
    For the RKCDG method with source terms, we have
    \begin{equation*}
    \langle \boldsymbol{u}_h^{n,l+1} , \boldsymbol{\varphi}_h\rangle  
    =\sum_{0\leq m\leq l} \big(c_{lm}\langle \boldsymbol{u}_h^{n,m} , \boldsymbol{\varphi}_h\rangle+\tau d_{lm} (\mathcal{H}(\boldsymbol{u}_h^{n,m}, \boldsymbol{\varphi}_h)+
    \langle \boldsymbol{g}^{n,m}, \boldsymbol{\varphi}_h\rangle)\big),~ l=0,\cdots,s-1.
    \end{equation*}
    We have the stability result: 
    there exists a constant $\kappa\in[1,2]$ such that, for $\tau \leq Ch^{\kappa}$, 
    \begin{equation}\label{eq:stability_RKCDG}
    \|\boldsymbol{u}_h^{n+m}\|^2\leq (1+C\tau)\|\boldsymbol{u}_h^n\|^2+C\tau\sum_{0\leq l< ms}\|\boldsymbol{g}^{n,l}\|^2.
    \end{equation}
\end{remark}

\subsection{Optimal error estimates of fully-discrete OECDG method}
\begin{theorem}\label{thm:optimal_error}
Consider the $\mathbb{P}^k$-based OECDG method for the linear advection equation with an $r$th-order explicit RK method on 1D quasi-uniform meshes or 2D uniform Cartesian meshes. Assume $k > \frac{d}{2}$ and $r > \frac{d}{2} + 1$ with $d$ being the spatial dimension, and the exact solution $u(x,t)$ is sufficiently smooth. If 
$\left\|\bm{u}_\sigma^0-
\begin{bmatrix}
    u(\cdot,0)\\ u(\cdot,0)
\end{bmatrix} 
\right\|\le Ch^{k+1}$,
then the OECDG method admits the optimal error estimate 
\begin{equation}
\max_{0\leq n\leq \lceil \frac{T}{\tau}\rceil} 
\left\|\boldsymbol{u}_{\sigma}^n-
\begin{bmatrix}
    u(x,n\tau)\\
    u(x,n\tau)
\end{bmatrix}
\right\|
\leq C(h^{k+1}+\tau^r)
\end{equation}
under the same type of time step constraint as the corresponding RKCDG method.
\end{theorem}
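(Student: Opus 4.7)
My approach is to decompose the error through the $L^2$ projection, recast the nonlinear OE filter as a source term so that the stability estimate \eqref{eq:stability_RKCDG} for RKCDG schemes with sources becomes applicable, and then close via discrete Gr\"onwall. Let $\Pi = (P_1, P_2)$ denote the $L^2$ projection onto $V_h \times W_h$ (with $P_1$ onto the primal space and $P_2$ onto the dual space), and write $\bm{u}_e(t) := (u(\cdot, t), u(\cdot, t))^\top$. Split
\begin{equation*}
\bm{u}_\sigma^n - \bm{u}_e(t_n) = \xi^n - \eta^n, \qquad \xi^n := \bm{u}_\sigma^n - \Pi \bm{u}_e(t_n), \qquad \eta^n := \Pi \bm{u}_e(t_n) - \bm{u}_e(t_n).
\end{equation*}
Standard approximation theory gives $\|\eta^n\| \le Ch^{k+1}$, so it suffices to prove $\|\xi^n\| \le C(h^{k+1} + \tau^r)$.

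Next, I would derive a recursion of RKCDG-with-source type for $\xi^n$. Because $\bm{u}_e$ is smooth and its two components coincide, the dissipative prefactor $(u_1-u_2)$ in $\mathcal{H}$ vanishes on $\bm{u}_e$ and, by integration by parts together with the Galerkin orthogonality of $\Pi$, one finds that $\Pi\bm{u}_e$ satisfies the RKCDG recursion up to a residual $\bm{g}^{n,l}_{\rm RK}$ which collects the RK temporal truncation (of order $\tau^r$ by classical Taylor expansion and $r$-th order consistency) and the spatial projection mismatch $\mathcal{H}(\Pi\bm{u}_e - \bm{u}_e, \cdot)$, whose $L^2$ dual norm is of order $h^{k+1}$ by the projection error estimates, trace inverse inequalities, and the special structure of $\mathcal{H}$ in \eqref{eq:1D_H}--\eqref{eq:2D_H} (analogous to the standard RKDG analysis). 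I would then rewrite the OE step as $\mathcal{F}_\tau\bm{v} = \bm{v} + \tau\bm{s}(\bm{v})$, so that the OECDG evolution \eqref{eq:RKCDG} takes exactly the RKCDG-with-source form displayed just before \eqref{eq:stability_RKCDG}. Subtracting the exact recursion from the scheme yields an equation for $\xi^n$ of the same form, with source $\bm{g}^{n,l} = \bm{g}^{n,l}_{\rm RK} + \bm{g}^{n,l}_{\rm OE}$. Applying \eqref{eq:stability_RKCDG} and iterating gives
\begin{equation*}
\|\xi^{n+m}\|^2 \le (1+C\tau)\|\xi^n\|^2 + C\tau\sum_{0\le l< ms}\|\bm{g}^{n,l}\|^2,
\end{equation*}
which, by discrete Gr\"onwall, implies $\max_n\|\xi^n\|^2 \le C e^{CT}\max_{n,l}\|\bm{g}^{n,l}\|^2$ provided the time-step constraint from \Cref{stability} is met; the latter is covered by the hypothesis $r > d/2+1$ (which controls the termination/contribution indices and yields the needed CFL regime).

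The main obstacle is bounding the OE source $\bm{g}^{n,l}_{\rm OE}$ by $O(h^{k+1}+\tau^r)$. This is delicate because the damping ODE \eqref{OEstep2D} depends nonlinearly on the stage value through $\delta_K^m(\bm{u}_h)$, in which $\sigma_K(\bm{u}_h)$ appears in the denominator, so the estimate is nonlinear even for linear advection. I would close this by a bootstrap/induction argument: assume inductively that $\|\xi^n\| \le C(h^{k+1}+\tau^r)$, which by the inverse inequality and the assumption $k>d/2$ upgrades to a uniform $L^\infty$ bound on $\bm{u}_\sigma^n$ and hence on the intermediate RK stages $\bm{u}_h^{n,l}$. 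Smoothness of the exact solution then yields $\sigma_K(\bm{u}_h^{n,l}) \ge c_0 > 0$ (bounded below) and, crucially, the overlapping difference $\partial^{\bm\alpha}(u_h^C - u_h^D)$ entering $\delta_K^m$ is $O(h^{k+1-|\bm\alpha|}+\tau^r)$ on smooth data, since both the primal and dual CDG solutions approximate the same smooth $u$. Substituting into the definition of $\delta_K^m$ shows that $\delta_K^m(\bm{u}_h^{n,l}) = O(h^{k-m}+\tau^r/h)$ with the correct scaling. Solving the linear damping ODE \eqref{OEstep2D} exactly over $[0,\tau]$ and invoking the fact that $P^{m-1}$ is an $L^2$-orthogonal projection (so $\|u_\sigma - P^{m-1}u_\sigma\|$ is controlled by the corresponding modal content of $u_h^C - u_h^D$) then gives $\|\bm{g}^{n,l}_{\rm OE}\| = O(h^{k+1}+\tau^r)$.

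Combining the two source bounds closes the induction and yields the claimed estimate $\max_n\|\bm{u}_\sigma^n - \bm{u}_e(t_n)\| \le C(h^{k+1}+\tau^r)$, with $C$ depending on $T$, on the smoothness of $u$, and on the constants from \Cref{Hcontinuous} and \Cref{stability}. The hypothesis $r > d/2+1$ is used to ensure the time-step constraint $\tau \le Ch$ arising from the CFL restriction and inverse inequalities, while $k>d/2$ is used only in the $L^\infty$ bootstrap controlling $\sigma_K$ and $\beta_{e,K}$; the initial-data bound $\|\bm{u}_\sigma^0 - \bm{u}_e(0)\| \le Ch^{k+1}$ starts the induction.
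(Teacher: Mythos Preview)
Your overall architecture---recast the OE filter as a source so that \eqref{eq:stability_RKCDG} applies, then close by discrete Gr\"onwall inside a bootstrap induction using $k>d/2$ for an $L^\infty$ upgrade---is essentially the paper's strategy. However, there is a genuine gap that would cause the argument to fail as written.

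The choice of the standard $L^2$ projection $\Pi$ does \emph{not} give the optimal-order bound $|\mathcal{H}(\Pi\bm{u}_e-\bm{u}_e,\bm{\varphi}_h)|\le Ch^{k+1}\|\bm{\varphi}_h\|$ that you assert. The ``standard RKDG analysis'' you appeal to relies on the Gauss--Radau (upwind) projection, not the $L^2$ projection, precisely because $L^2$ orthogonality does not kill the consistency error in $\mathcal{H}$. For CDG the situation is worse: the volume term $\int_\Omega \eta_2\,\partial_x\phi_1\,\mathrm{d}x$ pairs the dual-mesh projection error with a primal-mesh test function, so Galerkin orthogonality for $\Pi$ gives no cancellation; and the dissipation term carries a factor $1/\tau_{\max}\sim h^{-1}$ multiplying $\eta_1-\eta_2=P_1u-P_2u$, which is generically only $O(h^{k+1})$ in $L^2$, producing an $O(h^k)$ contribution. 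With $\Pi$ the best you can invoke is \Cref{Hcontinuous}, which yields $|\mathcal{H}(\bm\eta,\bm\varphi_h)|\le Ch^{-1}\|\bm\eta\|\|\bm\varphi_h\|\le Ch^k\|\bm\varphi_h\|$ and hence only a suboptimal $O(h^k+\tau^r)$ estimate. The paper fixes this by using the special projections $P_h^*,Q_h^*$ of Liu et al.~(2018), constructed so that $\mathcal{H}(\bm\eta,\bm\varphi_h)=0$ whenever $u\in\mathbb{P}^{k+1}(\Omega)$; a Taylor remainder argument then gives the superconvergence \eqref{eq:superconvergence}. The paper states explicitly that this property ``does not hold for the standard $L^2$ projection.''

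A secondary issue: your bound on the OE source is too loosely argued. You cannot claim $\partial^{\bm\alpha}(u_h^C-u_h^D)=O(h^{k+1-|\bm\alpha|}+\tau^r)$ pointwise without already controlling the stage error (circular), and the $L^2$-to-pointwise conversion costs $h^{-d/2}$. The paper's route is to prove $|\delta_K^m|\le C\big(h^{-1-d/2}\|\bm\zeta^{n,l}\|_{L^2(K)}+h^{k-d/2}\big)$ under the weaker inductive hypothesis $\|\bm\zeta^{n,l}\|_{L^\infty}\le h$, pair this with $\|u_h^K-P^{m-1}u_h^K\|_{L^2(K)}\le Ch^{1+d/2}$, and obtain $\|\bm u_\sigma^{n,l}-\bm u_h^{n,l}\|\le C\tau(\|\bm\zeta^{n,l}\|+h^{k+1})$; the $h^{\pm d/2}$ factors are what makes the bookkeeping close.
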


The outline of the proof of Theorem \ref{thm:optimal_error} is presented as follows.  
\begin{description}
    \item[Step 1:] We make an a priori assumption:
    \begin{equation}\label{eq:assumption}
    \|\bm{\zeta}^{n,l}\|_{L^{\infty}(\Omega)} \le h,
    \end{equation}
    where $\bm{\zeta}^{n,l}:=\bm{u}_h^{n,l}-P\bm{U}^{n,l}$. 
    The definition of the reference function $\bm{U}^{n,l}$
    and projection $P$ will be given later.
    We then prove Theorem \ref{thm:optimal_error} based on this assumption through the following steps:
    \begin{enumerate}
        \item[(a)]  Reformulate the error equation for the OECDG method as an RKCDG scheme with source terms.
        \item[(b)] Show that these source terms are of high order.
        \item[(c)] Apply the stability result of RKCDG method to derive the global-in-time error estimate.
    \end{enumerate}

    \item[Step 2:] we prove the a priori assumption \eqref{eq:assumption}
    by mathematical induction.
\end{description}

\begin{remark}
For simplicity, we assume $h<1$ and $\tau<1$ are sufficient small.
Since $u(\cdot,0) \equiv C$ is a trivial case, as $\bm{u}_{\sigma}^{n,l} \equiv [C,C]^\top$ and the error estimate in Theorem \ref{thm:optimal_error} holds automatically. Thus, we focus on the non-constant case $u(\cdot,0) \not\equiv C$, and assume without loss of generality that 
$\int_{\Omega} (u^C_\sigma)^0 \mathrm{d}\bm{x} 
=\int_{\Omega} (u^D_\sigma)^0 \mathrm{d}\bm{x} 
=\int_{\Omega} u(\cdot,0) \mathrm{d}\bm{x} 
$,
which is typically satisfied by 
projection-based initialization.
\end{remark}

\subsection*{Preliminaries}
To prove Theorem \ref{thm:optimal_error}, we introduce some preliminary concepts, including the reference function $\bm{U}^{n,l}$ and a specific spatial projection operator $P$, both of which are essential for the error analysis.

The reference function 
$\boldsymbol{U}^{n,l}(x) := 
\begin{bmatrix}
        u^{n,l}(x)\\ u^{n,l}(x)
\end{bmatrix}$,
where $u^{n,l}(x)$ is recursively defined as
\begin{align}
u^{n,0} &= u(t_n,x),\nonumber\\
u^{n,l+1} &=
\sum_{0\leq m\leq l}\left(c_{lm} u^{n,m}
-\tau d_{lm} \bm{\beta}\cdot \nabla u^{n,m}\right)
+\tau \rho^{n,l},\,l=0,\cdots, s-1, \label{eq:reference_u}
\end{align}
with $\rho^{n,l} = 0$ for $l < s-1$, and $\rho^{n,s-1}$ chosen such that $u^{n,s} = u(x,t_{n+1})$. Since the RK method is of order $r$, we have
\begin{equation}\label{eq:rho}
    \|\rho^{n,s-1}\|\le C\tau^r.
\end{equation}
Let $\bm{\rho}^{n,l} = \begin{bmatrix}\rho^{n,l}\\\rho^{n,l}\end{bmatrix}$. Since $u^{n,l}$ is smooth, we find that $\mathcal{H}(\bm{U}^{n,l}, \bm{\varphi}_h) = -\langle \bm{\beta} \cdot \nabla \bm{U}^{n,l}, \bm{\varphi}_h \rangle$ for any $\bm{\varphi}_h \in V_h \times W_h$, so that
\begin{equation}\label{stagesol}
\langle \boldsymbol{U}^{n,l+1}, \boldsymbol{\varphi}_h\rangle  =
\sum_{0\leq m\leq l}(c_{lm}\langle \boldsymbol{U}^{n,m} , \boldsymbol{\varphi}_h\rangle
+\tau d_{lm} \mathcal{H}(\boldsymbol{U}^{n,m},\boldsymbol{\varphi}_h))
+\tau \langle\bm{\rho}^{n,l}, \bm{\varphi}_h\rangle.
\end{equation}

The spatial projection $P$ is defined as
$P\boldsymbol{U}^{n,l} := \begin{bmatrix}
    P_h^* u^{n,l}\\ 
    Q_h^* u^{n,l}
\end{bmatrix} \in V_h\times W_h$, 
where the projections $P_h^*$ and $Q_h^*$ are as in \cite{liu2018optimal}. Define the projection error by $\bm{\eta}^{n,l} := \boldsymbol{U}^{n,l} - P\boldsymbol{U}^{n,l}$. The projection operator $P$ is carefully constructed to satisfy a superconvergence property (see \eqref{eq:superconvergence_H}):
\begin{equation}\label{eq:superconvergence}
|\mathcal{H}(\boldsymbol{\eta}^{n,l}, \boldsymbol{\varphi}_h)| \leq Ch^{k+1}\|u^{n,l}\|_{H^{k+1}}\|\boldsymbol{\varphi}_h\|,
\end{equation}
which does not hold for the standard $L^2$ projection.
Additionally, the projection $P$ admits
the following approximation property \cite{liu2018optimal}:
\begin{equation}\label{eq:projection_approximation}
    \| \bm{\eta}^{n,l}\|_{H^m} \le Ch^{k+1-m}
    \| \bm{\eta}^{n,l}\|_{H^{k+1}}
    \quad \forall 0\le m\le k+1.
\end{equation}

\subsection*{Proof of Theorem \ref{thm:optimal_error}}
This subsection provides the proof of Theorem \ref{thm:optimal_error}. We start by rewriting the RK step \eqref{OECDG} of the OECDG method as follows:
\begin{equation}\label{1D1}
    \langle \boldsymbol{u}_\sigma^{n,l+1}, \boldsymbol{\varphi}_h\rangle  
    =\sum_{0\leq m\leq l}
    (c_{lm}\langle \boldsymbol{u}_{\sigma}^{n,m}, \boldsymbol{\varphi}_h\rangle
    +\tau d_{lm} \mathcal{H}(\boldsymbol{u}_{\sigma}^{n,m}, \boldsymbol{\varphi}_h))
    +\langle \boldsymbol{u}_\sigma^{n,l+1}- \boldsymbol{u}_h^{n,l+1}, \boldsymbol{\varphi}_h\rangle.
\end{equation} 
Subtracting \eqref{stagesol} from \eqref{1D1} and splitting the error, we get
$
\boldsymbol{u}_{\sigma}^{n,l}-\boldsymbol{U}^{n,l}   
=
\left(\boldsymbol{u}_{\sigma}^{n,l} - P\boldsymbol{U}^{n,l}\right)
-
\left(\boldsymbol{U}^{n,l}-P\boldsymbol{U}^{n,l}\right)
=: \bm{\zeta}_{\sigma}^{n,l} - \bm{\eta}^{n,l},
$
which yields
\begin{equation}\label{split}
    \langle \bm{\zeta}_{\sigma}^{n,l+1}, \boldsymbol{\varphi}_h\rangle  
    =\sum_{0\leq m\leq l}(c_{lm}\left\langle \boldsymbol{\zeta}_{\sigma}^{n,m}, \boldsymbol{\varphi}_h \right\rangle
    +\tau d_{lm} \mathcal{H}(\boldsymbol{\zeta}_{\sigma}^{n,m} , \boldsymbol{\varphi}_h))+\tau\mathcal{G}^{n,l}(\boldsymbol{\varphi}_h),
\end{equation}
where 
\begin{equation*}
\begin{aligned}
\mathcal{G}^{n,l}(\boldsymbol{\varphi}_h) :=&\frac{1}{\tau}\langle \boldsymbol{u}_\sigma^{n,l+1} - \boldsymbol{u}_h^{n,l+1}, \boldsymbol{\varphi}_h\rangle
-\langle \bm{\rho}^{n,l}, \boldsymbol{\varphi}_h \rangle + \frac{1}{\tau}\langle\boldsymbol{\eta}^{n,l+1}-\sum_{0\leq m\leq l}c_{lm} \boldsymbol{\eta}^{n,m},\boldsymbol{\varphi}_h\rangle\\
&-\sum_{0\leq m\leq l}d_{lm} \mathcal{H}(\boldsymbol{\eta}^{n,m},\boldsymbol{\varphi}_h).
\end{aligned}   
\end{equation*}
To apply the stability estimate result, we rewrite \eqref{split} as
\begin{equation}\label{eq:error_rk_source}
    \langle \bm{\zeta}_{\sigma}^{n,l+1}, \boldsymbol{\varphi}_h\rangle  
    =\sum_{0\leq m\leq l}(c_{lm}\left\langle \boldsymbol{\zeta}_{\sigma}^{n,m}, \boldsymbol{\varphi}_h \right\rangle
    +\tau d_{lm}(\mathcal{H}(\boldsymbol{\zeta}_{\sigma}^{n,m}, \boldsymbol{\varphi}_h)
    +\langle \bm{G}^{n,m}, \boldsymbol{\varphi}_h\rangle)
    ),
\end{equation}
where $\bm{G}^{n,m}$ is defined inductively by
\begin{equation*}
    d_{ll} \langle \bm{G}^{n,l}, \boldsymbol{\varphi}_h\rangle
    =
    \mathcal{G}^{n,l}(\boldsymbol{\varphi}_h)
    -
    \sum_{0\le m \le l-1}
    d_{lm} \langle \bm{G}^{n,m}, \boldsymbol{\varphi}_h\rangle,
    \quad 0\le l \le s-1.
\end{equation*}
This completes Step 1(a) in the outline. 
For the proof of Step 1(b), 
by definition, we have
\begin{equation*}
\|\bm{G}^{n,l}\| \le C \sum_{0\le m \le l} \|\mathcal{G}^{n,m}\|,
\quad 
\|\mathcal{G}^{n,m}\| := \mathrm{sup}_{\bm{\varphi} \in V_h \times W_h, \bm{\varphi} \ne \bm{0}} 
\frac{|\mathcal{G}^{n,m}(\boldsymbol{\varphi})|}
{\|\boldsymbol{\varphi}\|}.
\end{equation*}
The estimate of $\|\mathcal{G}^{n,l}\|$
is a critical step in the error analysis of OECDG method,
which will be carefully explored later. 
We will prove that, under the assumption \eqref{eq:assumption}, the following inequality holds:
\begin{equation}\label{eq:estimate_source}
    \|\mathcal{G}^{n,l}\| \le C(\|\bm{\zeta}^{n,l}_\sigma\|+h^{k+1}+\tau^r).
\end{equation}
For the proof of Step 1(c), 
we apply the stability estimate \eqref{eq:stability_RKCDG} for the RKCDG method to \eqref{eq:error_rk_source}, and using \eqref{eq:estimate_source}, we obtain
$$\|\boldsymbol{\zeta}_{\sigma}^{n+1}\| 
\leq 
(1+C\tau)\|\boldsymbol{\zeta}_{\sigma}^{n}\|+C\tau(h^{k+1}+\tau^{r}).$$
Applying the discrete Gr\"ownwall's inequality yields
\begin{equation}
\|\boldsymbol{\zeta}_{\sigma}^n\|\leq C (\|\boldsymbol{\zeta}_{\sigma}^0\|+h^{k+1}+\tau^r).
\end{equation}
Since $\boldsymbol{\zeta}_{\sigma}^0 = \bm{u}_\sigma^0 - \bm{U}^0 + \bm{\eta}^0$, and noting that $\|\bm{u}_\sigma^0 - \bm{U}^0\| \le Ch^{k+1}$ and the approximation properties of $P$, we find
\begin{equation}\label{eq:initial_zeta}
    \|\boldsymbol{\zeta}_{\sigma}^0\|\le \|\bm{u}_\sigma^0 - \bm{U}^0\|+\|\bm{\eta}^0\| \le Ch^{k+1}.
\end{equation}
Therefore,
$$\|\boldsymbol{u}_{\sigma}^n-\boldsymbol{U}^n\|\leq \|\boldsymbol{\zeta}_{\sigma}^n\|+\|\boldsymbol{\eta}^n\|\leq C(h^{k+1}+\tau^r).$$
This completes the proof under the assumption \eqref{eq:assumption}.
The proof of assumption \eqref{eq:assumption} will be provided at the end of this subsection.

\subsection*{\em Proof of estimate \eqref{eq:estimate_source}}
We now supplement the  proof of the estimate \eqref{eq:estimate_source} 
under the assumption \eqref{eq:assumption}.
For clarity in the analysis, we decompose $\mathcal{G}^{n,l}(\boldsymbol{\varphi}_h)$ into the following two parts:
% $$\mathcal{G}_1^{n,l+1}(\boldsymbol{\varphi}_h):= -F^{n,l+1}(\boldsymbol{\varphi}_h),$$
$$\mathcal{G}_1^{n,l+1}(\boldsymbol{\varphi}_h):= 
-\langle \bm{\rho}^{n,l}, \bm{\varphi}_h \rangle+
\frac{1}{\tau}\langle\boldsymbol{\eta}^{n,l+1}-\sum_{0\leq m\leq l}c_{lm} \boldsymbol{\eta}^{n,m}, \boldsymbol{\varphi}_h\rangle
-\sum_{0\leq m\leq l}d_{lm}\mathcal{H}(\boldsymbol{\eta}^{n,m},\boldsymbol{\varphi}_h)
$$
and
$$\mathcal{G}_2^{n,l+1}(\boldsymbol{\varphi}_h):= \frac{1}{\tau}\langle \boldsymbol{u}_\sigma^{n,l+1}- \boldsymbol{u}_h^{n,l+1}, \boldsymbol{\varphi}_h\rangle.$$
The following lemma provides the estimate for $\mathcal{G}_1^{n,l+1}$.

\begin{lemma}\label{G1}
    $|\mathcal{G}_1^{n,l+1}(\boldsymbol{\varphi}_h)|
    \leq C(h^{k+1}+\tau^r)\left\|\boldsymbol{\varphi}_h\right\| \quad \forall \boldsymbol{\varphi}_h\in V_h\times W_h.$
\end{lemma}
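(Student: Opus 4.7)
The proof proceeds by decomposing $\mathcal{G}_1^{n,l+1}$ into its three constituent terms and bounding each separately. For the consistency term $\langle\bm{\rho}^{n,l},\bm{\varphi}_h\rangle$, an immediate application of Cauchy--Schwarz combined with \eqref{eq:rho} (and the convention $\rho^{n,l}=0$ for $l<s-1$) yields a contribution of $C\tau^r\|\bm{\varphi}_h\|$. For the third term $\sum_{0\le m\le l}d_{lm}\mathcal{H}(\bm{\eta}^{n,m},\bm{\varphi}_h)$, the superconvergence estimate \eqref{eq:superconvergence} of the tailored projection $P$ applies directly to each summand, and since the $d_{lm}$ are fixed RK coefficients and $\|u^{n,m}\|_{H^{k+1}}$ is uniformly bounded by the smoothness of the reference function, this sum is controlled by $Ch^{k+1}\|\bm{\varphi}_h\|$.

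The main obstacle is the middle term $\frac{1}{\tau}\langle\bm{\eta}^{n,l+1}-\sum_{0\le m\le l}c_{lm}\bm{\eta}^{n,m},\bm{\varphi}_h\rangle$, where the explicit $1/\tau$ prefactor would be catastrophic without cancellation. The key idea is to exploit the RK identity \eqref{eq:reference_u} that defines the reference function. Applying the linear projection $P$ componentwise to \eqref{eq:reference_u} produces an analogous identity for $P\bm{U}^{n,l+1}$, and subtracting this from \eqref{eq:reference_u} yields, after division by $\tau$,
\begin{equation*}
\frac{1}{\tau}\Big(\bm{\eta}^{n,l+1}-\sum_{0\le m\le l}c_{lm}\bm{\eta}^{n,m}\Big)
= -\sum_{0\le m\le l}d_{lm}\big(\bm{\beta}\cdot\nabla\bm{U}^{n,m}-P(\bm{\beta}\cdot\nabla\bm{U}^{n,m})\big) + \big(\bm{\rho}^{n,l}-P\bm{\rho}^{n,l}\big).
\end{equation*}
Each summand on the right-hand side is now a standard projection error of a smooth function, and the approximation property \eqref{eq:projection_approximation}, combined with uniform bounds on $\|\bm{\beta}\cdot\nabla u^{n,m}\|_{H^{k+1}}$ and $\|\rho^{n,l}\|_{H^{k+1}}$, produces the $L^2$ bound $Ch^{k+1}$. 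A final Cauchy--Schwarz step then delivers $Ch^{k+1}\|\bm{\varphi}_h\|$ for this term.

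Summing the three contributions gives $|\mathcal{G}_1^{n,l+1}(\bm{\varphi}_h)|\le C(h^{k+1}+\tau^r)\|\bm{\varphi}_h\|$, as required. The crux of the argument is the middle step: recognizing that the RK-consistency structure of the reference function can be transferred through the linear projection $P$ is what furnishes the extra factor of $\tau$ needed to absorb the dangerous $1/\tau$ prefactor, reducing the remaining work to routine approximation-theoretic bounds. Importantly, none of the OE-specific nonlinearity enters here, since $\mathcal{G}_1$ isolates precisely the part of $\mathcal{G}$ that does not involve the difference $\bm{u}_\sigma^{n,l+1}-\bm{u}_h^{n,l+1}$; that nonlinear contribution is deferred to the companion estimate for $\mathcal{G}_2$.
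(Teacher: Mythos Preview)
Your proof is correct and follows essentially the same route as the paper: both split $\mathcal{G}_1^{n,l+1}$ into the three terms and handle them the same way, with the middle term rewritten (via linearity of $P$ applied to \eqref{eq:reference_u}) as a projection error of the $\tau$-independent quantity $-\sum_m d_{lm}\bm{\beta}\cdot\nabla\bm{U}^{n,m}+\bm{\rho}^{n,l}$, followed by the approximation property \eqref{eq:projection_approximation}. The only minor difference is that for the third term you invoke the superconvergence bound \eqref{eq:superconvergence} directly, whereas the paper actually establishes \eqref{eq:superconvergence} within this proof via a Taylor decomposition $u^{n,m}=Tu^{n,m}+Ru^{n,m}$ (with $Tu^{n,m}\in\mathbb{P}^{k+1}$), using that $\mathcal{H}(T\bm{U}^{n,m}-PT\bm{U}^{n,m},\bm{\varphi}_h)=0$ and then applying weak boundedness \eqref{eq:H_bounded} to the remainder; since \eqref{eq:superconvergence} is stated in the preliminaries, your direct citation is legitimate.
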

\begin{proof}
Using \eqref{eq:rho}, we have $\langle \bm{\rho}^{n,l}, \bm{\varphi}_h \rangle \le C\tau^r \|\boldsymbol{\varphi}_h\|$.
Based on the definition in \eqref{eq:reference_u}, it follows that
$$
\bm{W}^{n,l}:=
\frac{1}{\tau}\big(\boldsymbol{U}^{n,l+1}-\sum_{0\leq m\leq l}c_{lm}\boldsymbol{U}^{n,m}\big)
=
-\sum_{0\leq m\leq l}
d_{lm} \bm{\beta}\cdot \nabla \bm{U}^{n,m}
+\bm{\rho}^{n,l},
$$
where $\bm{W}^{n,l}$ does not depend on $\tau$.
By the approximation properties of $P$, as given in \eqref{eq:projection_approximation}, we obtain
$
\|\tau^{-1}(\bm{\eta}^{n,l+1}-\sum_{0\leq m\leq l}c_{lm}\boldsymbol{\eta}^{n,m})\| 
=\|\bm{W}^{n,l} - P\bm{W}^{n,l}\|\leq Ch^{k+1}
$.

We can extend Lemma 3.3 in \cite{liu2018optimal} as follows: for any $\bm{\varphi} \in V_h \times W_h$, 
$\mathcal{H}(\bm{\eta}^n,\boldsymbol{\varphi}) = 0$
if $u^n \in \mathbb{P}^{k+1}(\Omega)$. 
By decomposing $u^{n,m}$ into $Tu^{n,m}+Ru^{n,m}$ through a Taylor expansion, where $Tu^{n,m}$ represents the sum of polynomials 
of degree at most $k+1$, and $Ru^{n,m}$ is the remainder term, we have
\begin{align}
|\mathcal{H}(\boldsymbol{\eta}^{n,m},\bm{\varphi}_h)|
\le &
|\mathcal{H}(T\bm{U}^{n,m}-PT\bm{U}^{n,m},\bm{\varphi}_h)|+
|\mathcal{H}(R\bm{U}^{n,m}-PR\bm{U}^{n,m},\bm{\varphi}_h)|\nonumber\\
=&
|\mathcal{H}(R\bm{U}^{n,m}-PR\bm{U}^{n,m},\bm{\varphi}_h)| \label{eq:superconvergence_H}\\
\le &
Ch^{-1}\|R\bm{U}^{n,m}-PR\bm{U}^{n,m}\|\|\bm{\varphi}_h\|
\le Ch^{k+1}\|\bm{\varphi}_h\|.\nonumber
\end{align}
Thus, we conclude that
$\big|\sum_{0\leq m\leq l}d_{lm}\mathcal{H}(\bm{\eta}^{n,m},\bm{\varphi}_h)\big|
\le Ch^{k+1}\|\bm{\varphi}_h\big\|.$
This completes the proof.
\end{proof}

The second term, $\mathcal{G}_2^{n,l+1}(\boldsymbol{\varphi}_h)$, is more challenging to estimate, and we approach it through an inductive process. First, by integrating $\hat{t}$ from 0 to $\tau$ in the damping ODE \eqref{OEstep2D}, we obtain 
$$
\left\{
\begin{aligned}
    \int_{C} ((u_{\sigma}^C)^{n,l}-&(u_{h}^C)^{n,l})\varphi_1\mathrm{d}\boldsymbol{x}= \\
    &-\sum_{m=0}^k \delta_C^m(\bm{u}_h^{n,l})\int_0^\tau \int_{C}\big((u_{\sigma}^C)^{n,l}(\zeta)-P^{m-1}(u_{\sigma}^C)^{n,l}(\zeta)\big)\varphi_1\mathrm{d}\bm{x}\mathrm{d}\zeta,\\
    \int_{D} ((u_{\sigma}^D)^{n,l}-&(u_{h}^D)^{n,l})\varphi_2\mathrm{d}\boldsymbol{x}= \\
    &-\sum_{m=0}^k \delta_D^m(\bm{u}_h^{n,l})\int_0^\tau \int_{D}\big((u_{\sigma}^D)^{n,l}(\zeta)-P^{m-1}(u_{\sigma}^D)^{n,l}(\zeta)\big)\varphi_2\mathrm{d}\bm{x}\mathrm{d}\zeta.
\end{aligned}
\right.
$$
Taking $\varphi_1 = (u_{\sigma}^C)^{n,l}-(u_{h}^C)^{n,l}$,
$\varphi_2 =(u_{\sigma}^D)^{n,l}-(u_{h}^D)^{n,l}$,
and applying the Cauchy--Schwarz inequality, we have
\begin{equation*}
    \left\{\begin{aligned}
    \left\|(u_{\sigma}^C)^{n,l}-(u_{h}^C)^{n,l}\right\|_{L^2(C)} 
    &\leq \sum_{m=0}^k \delta_C^m(\bm{u}_h^{n,l})\int_0^{\tau}\left\|(u_{\sigma}^C)^{n,l}(\zeta)-P^{m-1}(u_{\sigma}^C)^{n,l}(\zeta)\right\|_{L^2(C)}\mathrm{d}\zeta,
    \\
    \left\|(u_{\sigma}^D)^{n,l}-(u_{h}^D)^{n,l}\right\|_{L^2(D)} 
    &\leq \sum_{m=0}^k \delta_D^m(\bm{u}_h^{n,l})\int_0^{\tau}\left\|(u_{\sigma}^D)^{n,l}(\zeta)-P^{m-1}(u_{\sigma}^D)^{n,l}(\zeta)\right\|_{L^2(D)}\mathrm{d}\zeta.
\end{aligned}\right.
\end{equation*}

It is important to note that when the damping ODE \eqref{OEstep2D} is rewritten as an ODE system for the coefficients of the Legendre basis, the magnitudes of these coefficients do not increase; see equation (2.12) in \cite{peng2023oedg}. This observation implies that, for $K = C$ or $D$ and $\zeta\in (0,\tau]$, we have 
 $$\begin{aligned}\left\|(u_{\sigma}^{K})^{n,l}(\zeta)-P^{m-1}(u_{\sigma}^{K})^{n,l}(\zeta)\right\|_{L^2(K)}&\leq \left\|(u_{\sigma}^{K})^{n,l}(0)-P^{m-1}(u_{\sigma}^{K})^{n,l}(0)\right\|_{L^2(K)} \\&= \left\|(u_{h}^{K})^{n,l}-P^{m-1}(u_{h}^{K})^{n,l}\right\|_{L^2(K)}.\end{aligned}$$ 
Thus, 
\begin{equation}\label{eq:estimate}
\left\{
\begin{aligned}
    \left\|(u_{\sigma}^C)^{n,l}-(u_{h}^C)^{n,l}\right\|_{L^2(C)}&\leq 
    \tau\sum_{m=0}^k \delta_C^m(\bm{u}_h^{n,l})\left\|(u_{h}^C)^{n,l}-P^{m-1}(u_{h}^C)^{n,l}\right\|_{L^2(C)},
    \\
    \left\|(u_{\sigma}^D)^{n,l}-(u_{h}^D)^{n,l}\right\|_{L^2(D)}&\leq 
    \tau\sum_{m=0}^k \delta_D^m(\bm{u}_h^{n,l})\left\|(u_{h}^D)^{n,l}-P^{m-1}(u_{h}^D)^{n,l}\right\|_{L^2(D)},
\end{aligned}\right.
\end{equation}
which provides a basis for the following estimates of 
$\left\|(u_{h}^K)^{n,l}-P^{m-1}(u_{h}^K)^{n,l}\right\|_{L^2(K)}$
and $\delta_K^m(\bm{u}_h^{n,l})$
for $K=C$ or $D$.

\begin{lemma}\label{prop2}
    If $\left\|\boldsymbol{\zeta}^{n,l}\right\|_{L^{\infty}(\Omega)} \leq h$, then for $0\le m \le k$, we have
    $$
    \|(u_h^K)^{n,l}-P^{m-1}(u_h^K)^{n,l}\|_{L^2(K)}\leq Ch^{1+\frac{d}{2}}.$$
\end{lemma}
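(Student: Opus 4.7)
The idea is to decompose $(u_h^K)^{n,l}$ using the reference solution as $(u_h^K)^{n,l} = (PU^{n,l})^K + (\bm{\zeta}^{n,l})^K$, which gives
\begin{equation*}
(u_h^K)^{n,l} - P^{m-1}(u_h^K)^{n,l} = \bigl[(PU^{n,l})^K - P^{m-1}(PU^{n,l})^K\bigr] + \bigl[(\bm{\zeta}^{n,l})^K - P^{m-1}(\bm{\zeta}^{n,l})^K\bigr],
\end{equation*}
and I will bound each bracket in $L^2(K)$ separately by $Ch^{1+d/2}$.

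For the smooth part, I would insert and subtract $u^{n,l}$ and use the $L^2$-contractivity of $P^{m-1}$ to obtain
\begin{equation*}
\|(PU^{n,l})^K - P^{m-1}(PU^{n,l})^K\|_{L^2(K)} \le 2\|u^{n,l} - (PU^{n,l})^K\|_{L^2(K)} + \|u^{n,l} - P^{m-1} u^{n,l}\|_{L^2(K)}.
\end{equation*}
The first term is controlled by the cellwise approximation property of the projections $P_h^*$ and $Q_h^*$ defined in \cite{liu2018optimal}, which together with the local volume factor $|K|^{1/2}\sim h^{d/2}$ yields a bound of $Ch^{k+1+d/2}$. The second term is a classical local $L^2$-projection error: combining $\|u^{n,l} - P^{m-1} u^{n,l}\|_{L^2(K)} \le C h^m |u^{n,l}|_{H^m(K)}$ with $|u^{n,l}|_{H^m(K)} \le C h^{d/2}$ (thanks to the smoothness of $u^{n,l}$ inherited from \eqref{eq:reference_u} and the residual bound \eqref{eq:rho}) gives $Ch^{m+d/2}$ for $m\ge 1$, and the convention $P^{-1}:=P^0$ handles $m=0$ to produce $Ch^{1+d/2}$. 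Since $h<1$, both contributions are dominated by $Ch^{1+d/2}$ uniformly in $0\le m \le k$.

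For the discrete-error part, I would simply apply the a priori hypothesis \eqref{eq:assumption} together with the $L^2$-stability of $P^{m-1}$ and the quasi-uniformity of the mesh:
\begin{equation*}
\|(\bm{\zeta}^{n,l})^K - P^{m-1}(\bm{\zeta}^{n,l})^K\|_{L^2(K)} \le 2\|(\bm{\zeta}^{n,l})^K\|_{L^2(K)} \le 2\|\bm{\zeta}^{n,l}\|_{L^{\infty}(\Omega)}\,|K|^{1/2} \le C h \cdot h^{d/2}.
\end{equation*}
Adding the two bounds gives the claimed estimate.

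There is no serious obstacle here, but one detail that must be tracked carefully is the uniformity over $0\le m \le k$: the naive cellwise estimate $\|u^{n,l} - P^{m-1}u^{n,l}\|_{L^2(K)} \lesssim h^{m+d/2}$ would degrade to $h^{d/2}$ at $m=0$, and it is precisely the convention $P^{-1}:=P^0$ that upgrades this to $h^{1+d/2}$. This exponent is exactly the scale that matches, on one side, the a priori hypothesis $\|\bm{\zeta}^{n,l}\|_{L^\infty}\le h$ and, on the other side, the eventual need to convert the cellwise bound into the global damping estimate required for \eqref{eq:estimate_source} in the subsequent inductive step.
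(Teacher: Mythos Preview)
Your argument is correct. The paper's own proof is a one-line citation---``apply Lemma 4.14 from \cite{peng2023oedg} to the components of $\boldsymbol{u}_h^{n,l}$''---so you are effectively supplying the details behind that external lemma. Your decomposition $(u_h^K)^{n,l}=(P\bm{U}^{n,l})^K+(\bm{\zeta}^{n,l})^K$, followed by the triangle inequality and the local $L^2$-projection and $P_h^*,Q_h^*$ approximation estimates, is exactly the natural route and matches what that cited result does; the same localized approximation of $P_h^*$ is already invoked in the paper (see the proof of Lemma~\ref{lem_delta}), so you are not importing any unjustified tool. Your remark about the role of the convention $P^{-1}:=P^0$ at $m=0$ is spot-on and is precisely why the bound comes out as $h^{1+d/2}$ uniformly in $0\le m\le k$.
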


\begin{proof}
    This lemma follows by applying Lemma 4.14 from \cite{peng2023oedg} to the components of $\boldsymbol{u}_h^{n,l}$.
\end{proof}

\begin{lemma}\label{lem_avg}
    If $\|\boldsymbol{\zeta}^{n,l}\|_{L^{\infty}(\Omega)} \leq h$, then there exists a constant $C_{**,l}>0$, independent of $n$, such that
    \begin{equation*}
        \|(u_h^K)^{n,l}-{\rm avg}((u_h^K)^{n,l})\|_{L^{\infty}(\Omega)}
        \geq \frac{1}{2}\|(u^K_h)^0-{\rm avg}((u^K_h)^0)\|_{L^{\infty}(\Omega)}>0
    \end{equation*}
    provided $h \le C_{**,l}$.
\end{lemma}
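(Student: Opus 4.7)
The plan is to compare $(u_h^K)^{n,l}$ against the smooth reference stage $u^{n,l}$, exploiting two structural facts: (i) for linear advection with constant velocity $\bm{\beta}$ and periodic boundary conditions, the exact solution is a pure translation, so $\|u(\cdot,t)-{\rm avg}(u(\cdot,t))\|_{L^\infty(\Omega)}\equiv \mathcal{M}_0:=\|u(\cdot,0)-{\rm avg}(u(\cdot,0))\|_{L^\infty(\Omega)}>0$ under the non-constant, normalized setup fixed by the preceding remark; and (ii) the initialization ${\rm avg}((u^K_h)^0)={\rm avg}(u(\cdot,0))$, combined with the mean-preservation of every RK$+$OE update (which holds because $\sum_m c_{lm}=1$, the CDG flux sums telescope under periodicity, and the OE damping only modifies the higher modes), yields ${\rm avg}((u_h^K)^{n,l})={\rm avg}(u^{n,l})={\rm avg}(u(\cdot,0))$ for every $n,l,K$. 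The lemma then reduces to controlling $\|(u_h^K)^{n,l}-u^{n,l}\|_{L^\infty(\Omega)}$ and $\|u^{n,l}-u(\cdot,t_{n,l})\|_{L^\infty(\Omega)}$, where $t_{n,l}$ is the intermediate time of stage $l$.

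First, I would write $(u_h^K)^{n,l}-u^{n,l}=\zeta^{n,l,K}-\eta^{n,l,K}$ and combine the a priori hypothesis $\|\bm{\zeta}^{n,l}\|_{L^\infty(\Omega)}\le h$ with the $L^\infty$ projection estimate $\|\bm{\eta}^{n,l}\|_{L^\infty(\Omega)}\le Ch^{k+1}$, which follows from \eqref{eq:projection_approximation} and Sobolev embedding (using $k>d/2$ and the smoothness of $u$), to obtain $\|(u_h^K)^{n,l}-u^{n,l}\|_{L^\infty(\Omega)}\le 2h$ for small $h$. Next, because $\rho^{n,l}=0$ for $l<s-1$ and $\rho^{n,s-1}=O(\tau^r)$ by \eqref{eq:rho}, a standard RK-consistency estimate for the transport operator $-\bm{\beta}\cdot\nabla$ on smooth data gives $\|u^{n,l}-u(\cdot,t_{n,l})\|_{L^\infty(\Omega)}\le C_1\tau$, with $C_1$ controlled by the $W^{k+1,\infty}$-norms of $u$ on $[0,T]$. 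Combining with fact (i) yields $\|u^{n,l}-{\rm avg}(u^{n,l})\|_{L^\infty(\Omega)}\ge \mathcal{M}_0-C_1\tau$, and invoking the equality of averages from (ii) the reverse triangle inequality gives
\[
\|(u_h^K)^{n,l}-{\rm avg}((u_h^K)^{n,l})\|_{L^\infty(\Omega)}\ge \mathcal{M}_0-C_1\tau-2h.
\]
A symmetric calculation at $n=0$, using the initial-data hypothesis $\|\bm{u}_\sigma^0-(u(\cdot,0),u(\cdot,0))\|\le Ch^{k+1}$ and an inverse inequality on the broken polynomial space to pass from $L^2$ to $L^\infty$ (again requiring $k>d/2$), produces the matching upper bound $\|(u_h^K)^0-{\rm avg}((u_h^K)^0)\|_{L^\infty(\Omega)}\le \mathcal{M}_0+C_2 h^{k+1-d/2}$. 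The claim then reduces to the scalar inequality $\mathcal{M}_0-C_1\tau-2h \ge \tfrac{1}{2}(\mathcal{M}_0+C_2 h^{k+1-d/2})$, which holds once $h$ (and hence $\tau=O(h)$ by CFL) is smaller than a threshold $C_{**,l}$ determined by $\mathcal{M}_0$, $\bm{\beta}$, the RK tableau, and the smoothness norms of $u$.

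The main obstacle I expect is ensuring that $C_1$ and $C_2$, and therefore $C_{**,l}$, are \emph{independent of} $n$, as the lemma demands. This is resolved by noting that the RK tableau is fixed, the exact solution $u$ enjoys uniformly bounded $W^{k+1,\infty}(\Omega)$-norms on $[0,T]$ by assumed smoothness, and the projection bounds on $\bm{\eta}^{n,l}$ depend only on these uniform smoothness norms. Hence every fresh RK restart from $u(\cdot,t_n)$ incurs the same consistency constant $C_1$ and the same projection constant $C_2$, so the argument closes uniformly in $n$.
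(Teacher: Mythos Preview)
Your proposal is correct and follows essentially the same strategy as the paper: first establish conservation of the mean $\mathrm{avg}((u_h^K)^{n,l})=\mathrm{avg}((u_h^K)^0)$ by testing the scheme against constants (which the paper does by taking $\bm{\varphi}_h=[1,0]^\top$ or $[0,1]^\top$ in \eqref{eq:RKCDG}), then compare $(u_h^K)^{n,l}$ to the smooth exact solution through the $\zeta-\eta$ splitting and translation invariance, which is precisely the content of Lemma~4.12 in \cite{peng2023oedg} that the paper invokes. A minor remark: your appeal to ``Sobolev embedding'' for $\|\bm{\eta}^{n,l}\|_{L^\infty}\le Ch^{k+1}$ is not quite the right mechanism on broken spaces---use instead the standard cellwise $L^\infty$ approximation estimate for the projection (or inverse inequality plus the $L^2$ bound)---but the conclusion and the overall argument are sound.
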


\begin{proof}
    Since we have assumed that $\text{avg}((u_h^C)^{0}) = \text{avg}((u_h^D)^{0})$, it can be shown that for any $n\ge0$ and $0\le l \le s$, the average satisfies ${\rm avg}((u_h^K)^{n,l}) = {\rm avg}((u_h^K)^0)$. This result follows by taking $\bm{\varphi}_h = [1,0]^\top$ or $[0,1]^\top$ in \eqref{eq:RKCDG}. Applying the proof technique of Lemma 4.12 in \cite{peng2023oedg} yields the desired conclusion.
\end{proof}

Since our new damping coefficient $\delta_{K}$ differs from that used in the OEDG method \cite{peng2023oedg}, we must establish the following lemma to estimate $|\delta_{K}^m(\bm{u}_h^{n,l})|$.

\begin{lemma}\label{lem_delta}
    If $\|\boldsymbol{\zeta}^{n,l}\|_{L^{\infty}(\Omega)} \leq h$, then there exists a constant $C_{**,l} > 0$, independent of $n$, such that
    \begin{align*}
        |\delta_{K}^m(\bm{u}_h^{n,l})| &\leq C\left(h^{-1-\frac{d}{2}}\|\bm{\zeta}^{n,l}\|_{L^2(K)} + h^{k-\frac{d}{2}}\|u^{n,l}\|_{H^{k+1}(K)}\right),
    \end{align*}
    provided $h \le C_{**,l}$.
\end{lemma}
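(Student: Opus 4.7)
\textbf{Proof plan for \Cref{lem_delta}.} The plan is to bound $|\delta_K^m(\bm{u}_h^{n,l})|$ directly from its defining formula, handling the denominator $\sigma_K(\bm{u}_h^{n,l})$ and the numerator edge integral separately. First, \Cref{lem_avg} furnishes, for $h\le C_{**,l}$, the uniform positive lower bound $\sigma_K(\bm{u}_h^{n,l}) \ge \frac{1}{2}\|(u_h^K)^0-{\rm avg}((u_h^K)^0)\|_{L^{\infty}(\Omega)} > 0$. Since this lower bound is independent of $n$, it remains to show that the numerator is bounded by $C\bigl(h^{-1-d/2}\|\bm{\zeta}^{n,l}\|_{L^2(K)} + h^{k-d/2}\|u^{n,l}\|_{H^{k+1}(K)}\bigr)$ times that same constant.

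Next, I would split $u_h^C - u_h^D = (\zeta^C - \zeta^D) + (P_h^* u^{n,l} - Q_h^* u^{n,l})$, isolating a piecewise-polynomial error part and a smooth-function projection-difference part. For the polynomial part, standard derivative-inverse together with trace-inverse inequalities for degree-$k$ polynomials on the primal or dual cell adjacent to $e$ yield, for $|\bm{\alpha}|=m$, $\|\partial^{\bm{\alpha}}\zeta^C\|_{L^2(e)} \le C h^{-m-1/2}\|\zeta^C\|_{L^2(K')}$, and an analogous bound for $\zeta^D$ on the unique dual cell whose interior contains $e$. For the projection part, I would rewrite $P_h^* u - Q_h^* u = (u - Q_h^* u) - (u - P_h^* u)$ and combine \eqref{eq:projection_approximation} with a scaled trace inequality to obtain $\|\partial^{\bm{\alpha}}(u - P_h^* u)\|_{L^2(e)} \le C h^{k+1/2-m}\|u^{n,l}\|_{H^{k+1}(K')}$, and similarly for $Q_h^*$.

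Finally, I would combine these edge bounds via Cauchy--Schwarz, $\int_e |f|\,\mathrm{d}s \le |e|^{1/2}\|f\|_{L^2(e)}$, together with $|e|\sim h^{d-1}$ and $h_{e,K}\sim h$. The prefactor $h_{e,K}^{m-1}/|e|$ scales like $h^{m-d}$, so multiplying through converts the $\zeta$-bound into $h^{-1-d/2}\|\bm{\zeta}^{n,l}\|_{L^2(K)}$ and the projection bound into $h^{k-d/2}\|u^{n,l}\|_{H^{k+1}(K)}$; summing over the $O(1)$ edges in $\partial K$ and absorbing the constants $\beta_{e,K}$, $(2m+1)/((2k-1)m!)$ preserves the form.

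The main obstacle will be the bookkeeping between primal and dual meshes, since $\zeta^C$ and $\zeta^D$ are polynomial on \emph{different} cells: when $K \in \mathcal{T}_h^C$ and $e\in\partial K$, the relevant piece of $\zeta^D$ lives on a neighboring dual cell $K_D$, so one must absorb $\|\zeta^D\|_{L^2(K_D)}$ into $\|\bm{\zeta}^{n,l}\|_{L^2(K)}$ by enlarging the patch, which is permissible because each primal cell intersects only finitely many dual cells of comparable measure (and conversely when $K\in\mathcal{T}_h^D$). A secondary technical point is justifying the edge version of the projection approximation bound from its $L^2$-form via a routine reference-cell scaling argument; this is standard but should be stated explicitly so that the final constant $C$ depends only on $k$, $d$, the mesh regularity, and $\|u^{n,l}\|_{H^{k+1}}$.
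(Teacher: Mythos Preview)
Your proposal is correct and follows essentially the same route as the paper: bound $\sigma_K$ below via \Cref{lem_avg}, apply Cauchy--Schwarz on the edge integral, then split $u_h^C-u_h^D$ into $\zeta$-parts (handled by inverse and trace inequalities) and $\eta$-parts (handled by the multiplicative trace inequality and the approximation property \eqref{eq:projection_approximation}). The scaling you compute matches the paper's.

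The one place where the paper proceeds slightly differently is the cross-mesh bookkeeping you flag at the end. Your claim that an edge $e\in\partial K$ (for $K\in\mathcal{T}_h^C$) lies in the interior of a \emph{unique} dual cell is correct in 1D but not in 2D: there each primal edge is bisected by a dual-cell interface, so $\zeta^D$ is only piecewise polynomial along $e$. The paper avoids your patch-enlargement fix by instead decomposing $K$ itself into the four sub-cells $K\cap D$ (quarters of dual cells), applying the inverse/trace inequalities on each sub-cell, and summing; this yields $\|\zeta^D\|_{L^2(K)}$ directly and hence the bound exactly as stated with $\|\bm{\zeta}^{n,l}\|_{L^2(K)}$ on the right. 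Your enlargement would give the same estimate with $K$ replaced by a bounded-overlap neighborhood, which still suffices for \Cref{lem:u_sigma_u_h} after summation, but does not quite match the lemma as written.
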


\begin{proof} 
Without loss of generality, we prove this lemma only for the case $K \in \mathcal{T}_h^C$. Note that
\begin{equation}\label{eq:estimate_0}
\begin{aligned}
&|\delta_{K}^m(\bm{u}_h^{n,l})|
=\bigg|\sum_{e\in \partial K} \beta_{e,K}\frac{(2m+1)h_{e,K}^{m-1}}{(2k-1)m!\sigma_K(\bm{u}_h)|e|}
\int_e\sqrt{\sum_{|\bm{\alpha}|=m}\big(\partial^{\bm{\alpha}}((u_h^C)^{n,l}-(u_h^D)^{n,l})\big)^2} \mathrm{d}s\bigg|\\
\le &\,C\sum_{e\in \partial K} h^{m-1}  |e|^{-\frac{1}{2}}
\bigg\|\sum_{|\bm{\alpha}|=m}\partial^{\bm{\alpha}}((u_h^C)^{n,l}-(u_h^D)^{n,l})\bigg\|_{L^2(e)}\\
\le &\,C 
\sum_{|\bm{\alpha}|=m} h^{m-\frac{d+1}{2}}
\Big(
\big\|\partial^{\bm{\alpha}}((u_h^C)^{n,l}-u^{n,l})\big\|_{L^2(\partial K)}+
\big\|\partial^{\bm{\alpha}}((u_h^D)^{n,l}-u^{n,l})\big\|_{L^2(\partial K)}
\Big),
\end{aligned}
\end{equation}
where we have used Lemma \ref{lem_avg}, the fact $|\beta_{e,K}|\le C$,
and the Cauchy--Schwarz inequality
 in the first inequality, 
and triangular inequality in the second inequality. 
For the first term, we observe that
\begin{equation*}
\begin{aligned}
\big\|\partial^{\bm{\alpha}}((u_h^C)^{n,l}-u^{n,l})\big\|_{L^2(\partial K)}
& =\big\|\partial^{\bm{\alpha}}((\zeta^C)^{n,l}-(\eta^C)^{n,l})\big\|_{L^2(\partial K)}\\
&\le\, 
\big\|\partial^{\bm{\alpha}}(\zeta^C)^{n,l} \big\|_{L^2(\partial K)}
+
\big\|\partial^{\bm{\alpha}}(\eta^C)^{n,l} \big\|_{L^2(\partial K)},
\end{aligned}
\end{equation*}
where $\zeta^C$ and $\eta^C$ are the first components of $\bm{\zeta}$ and $\bm{\eta}$, respectively.
Employing the inverse inequality gives 
\begin{equation*}
\big\|\partial^{\bm{\alpha}}(\zeta^C)^{n,l} \big\|_{L^2(\partial K)}
\le C h^{-m-\frac{1}{2}}
\big\|(\zeta^C)^{n,l} \big\|_{L^2(K)}.
\end{equation*}
Applying the multiplicative trace inequality and the approximation property of $P^*_h$, we obtain
\begin{equation*}
\begin{aligned}
\big\|\partial^{\bm{\alpha}}(\eta^C)^{n,l} \big\|_{L^2(\partial K)}^2
\le 
&C \big\|\partial^{\bm{\alpha}}(\eta^C)^{n,l} \big\|_{L^2(K)}
\big(
\big\|\partial^{\bm{\alpha}}(\eta^C)^{n,l}\big\|_{H^1(K)}
+ h^{-1}
\big\|\partial^{\bm{\alpha}}(\eta^C)^{n,l}\big\|_{L^2(K)}
\big)\\
\le&
C h^{2k+1-2m} 
\big\|u^{n,l}\big\|_{H^{k+1}(K)}^2.
\end{aligned}
\end{equation*}
Therefore,
\begin{equation}\label{eq:estimate_1}
\big\|\partial^{\bm{\alpha}}((u_h^C)^{n,l}-u^{n,l})\big\|_{L^2(\partial K)}
\le 
C h^{-m-\frac{1}{2}}
\left(
\big\|(\zeta^C)^{n,l} \big\|_{L^2(K)}+
h^{k+1} 
\big\|u^{n,l}\big\|_{H^{k+1}(K)}^2\right).
\end{equation}
Similarly,  it can be shown that
\begin{equation}\label{eq:estimate_2}
\big\|\partial^{\bm{\alpha}}((u_h^D)^{n,l}-u^{n,l})\big\|_{L^2(\partial K)}
\le 
C h^{-m-\frac{1}{2}}
\left(
\big\|(\zeta^D)^{n,l} \big\|_{L^2(K)}+
h^{k+1} 
\big\|u^{n,l}\big\|_{H^{k+1}(K)}^2\right).
\end{equation}
It is worth mentioning that although $(u_h^D)^{n,l}$ is a piecewise polynomial on cell $K \in \mathcal{T}_h^C$, we can decompose $K=[x_{i-\frac{1}{2}},x_{i+\frac{1}{2}}]\times [y_{j-\frac{1}{2}},y_{j+\frac{1}{2}}]$ into four parts, i.e.,
\begin{equation*}
K = \bigcup_{s_1,s_2 \in \{0,\frac{1}{2}\}}
[x_{i-\frac{1}{2}+s_1},x_{i+s_1}]\times
[y_{j-\frac{1}{2}+s_2},y_{j+s_2}],
\end{equation*}
and prove \eqref{eq:estimate_2}.
Finally, substituting \eqref{eq:estimate_1} and \eqref{eq:estimate_2} into \eqref{eq:estimate_0}, and noting that for $K=C$ or $D$, $\big\|(\zeta^K)^{n,l} \big\|_{L^2(K)} \le 
\big\|\bm{\zeta}^{n,l} \big\|_{L^2(K)}$,
the proof is completed.
\end{proof}

By combining Lemma \ref{prop2}, Lemma \ref{lem_delta}, 
and equations \eqref{eq:estimate}, 
we arrive at the following lemma.

\begin{lemma}\label{lem:u_sigma_u_h}
If $\|\boldsymbol{\zeta}^{n,l}\|_{L^{\infty}(\Omega)}\leq h$,
there exists a constant $C_{**,l}>0$, independent of $n$,
such that
\begin{align}\label{G3_1}
    \|\bm{u}_{\sigma}^{n,l}-\bm{u}_{h}^{n,l}\|
    \le C\tau\big(\|\boldsymbol{\zeta}^{n,l}\|+h^{k+1}\big)
\end{align}
provided $h\le C_{**,l}$.
\end{lemma}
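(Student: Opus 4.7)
The plan is to combine the three ingredients cited in the lemma's statement in a purely algebraic fashion, cell by cell, and then sum the $L^{2}$ contributions. The key observation is that the powers of $h$ arising from Lemma~\ref{prop2} and Lemma~\ref{lem_delta} multiply exactly to $1$ or to $h^{k+1}$, so no nontrivial cancellation or additional estimates are required beyond careful bookkeeping.

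First, I would fix a cell $K \in \mathcal{T}_h^C$ (the argument for $K \in \mathcal{T}_h^D$ is identical by symmetry) and start from inequality \eqref{eq:estimate}, which bounds $\|(u^C_\sigma)^{n,l} - (u^C_h)^{n,l}\|_{L^2(C)}$ by $\tau$ times a sum over $m$ of $\delta_C^m(\bm{u}_h^{n,l}) \cdot \|(u^C_h)^{n,l} - P^{m-1}(u^C_h)^{n,l}\|_{L^2(C)}$. Substituting Lemma~\ref{prop2} gives $\|(u^C_h)^{n,l} - P^{m-1}(u^C_h)^{n,l}\|_{L^2(C)} \le C h^{1+d/2}$, and substituting Lemma~\ref{lem_delta} gives $|\delta_C^m(\bm{u}_h^{n,l})| \le C\bigl(h^{-1-d/2}\|\bm{\zeta}^{n,l}\|_{L^2(C)} + h^{k-d/2}\|u^{n,l}\|_{H^{k+1}(C)}\bigr)$. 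Multiplying and noting $h^{1+d/2} \cdot h^{-1-d/2} = 1$ and $h^{1+d/2}\cdot h^{k-d/2} = h^{k+1}$, the exponents of $h$ line up exactly to yield the per-cell bound
\begin{equation*}
\bigl\|(u^C_\sigma)^{n,l} - (u^C_h)^{n,l}\bigr\|_{L^2(C)}
\le C\tau\bigl(\|\bm{\zeta}^{n,l}\|_{L^2(C)} + h^{k+1}\|u^{n,l}\|_{H^{k+1}(C)}\bigr),
\end{equation*}
where the sum over $0\le m \le k$ is absorbed into the constant (since $k$ is fixed).

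Next, I would square both sides, use $(a+b)^2 \le 2(a^2+b^2)$, and sum over all primal cells $C \in \mathcal{T}_h^C$. Because the $L^2$-norm decomposes additively over cells and $\sum_C \|u^{n,l}\|_{H^{k+1}(C)}^2 = \|u^{n,l}\|_{H^{k+1}(\Omega)}^2 \le C$ by the assumed smoothness of the exact solution $u$, this yields $\|(u^C_\sigma)^{n,l} - (u^C_h)^{n,l}\| \le C\tau(\|\bm{\zeta}^{n,l}\| + h^{k+1})$. The entirely analogous argument on the dual mesh produces the matching bound for the $D$-component, and adding the two component estimates in the product norm $\|\cdot\|$ delivers the desired inequality \eqref{G3_1}.

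I do not anticipate a genuine obstacle here: the proof is essentially a direct multiplication of the three cited estimates followed by a summation, and the exponents of $h$ are arranged precisely so that the $h^{d/2}$ factors cancel. The only point requiring a little care is the summation step, where one must invoke the Cauchy--Schwarz-type inequality $(a+b)^2 \le 2(a^2+b^2)$ before summing, and then rely on the smoothness of the reference solution to ensure $\|u^{n,l}\|_{H^{k+1}(\Omega)}$ is bounded uniformly in $n$ and $l$ (which follows from the recursive definition \eqref{eq:reference_u} together with the smoothness of $u$ and the bound \eqref{eq:rho} on $\rho^{n,l}$). The constant $C_{**,l}$ in the conclusion can be inherited directly from the one supplied by Lemma~\ref{lem_delta} (which is the binding constraint, as Lemma~\ref{prop2} has no such smallness requirement).
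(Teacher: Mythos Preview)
Your proposal is correct and matches the paper's own approach exactly: the paper states only that the lemma follows ``by combining Lemma~\ref{prop2}, Lemma~\ref{lem_delta}, and equations~\eqref{eq:estimate},'' and your argument spells out precisely this combination, with the per-cell multiplication, the cancellation of the $h^{\pm(1+d/2)}$ factors, and the summation over cells carried out correctly. Your remark that $C_{**,l}$ is inherited from Lemma~\ref{lem_delta} is also accurate.
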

	
\begin{lemma}
    \label{zeta_zetasigma}
    $\left\|\boldsymbol{\zeta}^{n,l+1}\right\|\leq C\sum_{m=0}^{l}\left\|\boldsymbol{\zeta}_{\sigma}^{n,m}\right\|+C\tau(h^{k+1}+\tau^r),\quad \forall 0\leq l \leq s-1.$
\end{lemma}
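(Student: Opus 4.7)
The plan is to derive an error identity for $\bm{\zeta}^{n,l+1}$ by subtracting the reference-function relation \eqref{stagesol} from the OECDG update \eqref{OECDG}, then to test against $\bm{\zeta}^{n,l+1}$ itself and control each term using the approximation/superconvergence properties of $P$ together with the weak boundedness of $\mathcal{H}$ already established in Lemma \ref{Hcontinuous}.

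Writing $\bm{u}_h^{n,l+1}-\bm{U}^{n,l+1}=\bm{\zeta}^{n,l+1}-\bm{\eta}^{n,l+1}$ and $\bm{u}_\sigma^{n,m}-\bm{U}^{n,m}=\bm{\zeta}_\sigma^{n,m}-\bm{\eta}^{n,m}$, and rearranging so as to group the $\bm{\eta}^{n,l+1}$ contribution together with the $\sum_m c_{lm}\bm{\eta}^{n,m}$ terms, I obtain, for every $\bm{\varphi}_h\in V_h\times W_h$,
\begin{align*}
\langle\bm{\zeta}^{n,l+1},\bm{\varphi}_h\rangle
=&\,\Big\langle\bm{\eta}^{n,l+1}-\sum_{m=0}^{l}c_{lm}\bm{\eta}^{n,m},\bm{\varphi}_h\Big\rangle
+\sum_{m=0}^{l}c_{lm}\langle\bm{\zeta}_\sigma^{n,m},\bm{\varphi}_h\rangle\\
&+\tau\sum_{m=0}^{l}d_{lm}\mathcal{H}(\bm{\zeta}_\sigma^{n,m},\bm{\varphi}_h)
-\tau\sum_{m=0}^{l}d_{lm}\mathcal{H}(\bm{\eta}^{n,m},\bm{\varphi}_h)
-\tau\langle\bm{\rho}^{n,l},\bm{\varphi}_h\rangle.
\end{align*}
The key structural observation, already exploited in the proof of Lemma \ref{G1}, is that $\tau^{-1}\big(\bm{\eta}^{n,l+1}-\sum_m c_{lm}\bm{\eta}^{n,m}\big)=(I-P)\bm{W}^{n,l}$ with $\bm{W}^{n,l}$ a $\tau$-independent smooth function, so by \eqref{eq:projection_approximation} this combination has $L^2$ norm of size $O(\tau h^{k+1})$.

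I would then set $\bm{\varphi}_h=\bm{\zeta}^{n,l+1}\in V_h\times W_h$ and apply Cauchy--Schwarz to the inner-product terms, the weak boundedness \eqref{eq:H_bounded} to $\mathcal{H}(\bm{\zeta}_\sigma^{n,m},\bm{\zeta}^{n,l+1})$, the superconvergence property \eqref{eq:superconvergence} to $\mathcal{H}(\bm{\eta}^{n,m},\bm{\zeta}^{n,l+1})$, and the bound $\|\bm{\rho}^{n,l}\|\le C\tau^{r}$ from \eqref{eq:rho}. The main delicate point is the factor $h^{-1}$ produced by the weak boundedness of $\mathcal{H}$: it is absorbed by the prefactor $\tau$ thanks to the time-step restriction $\tau\lesssim h$ (and even stricter conditions in the weak-stability cases of Theorem \ref{stability}), so that $\tau h^{-1}\le C$ reduces that contribution to $C\|\bm{\zeta}_\sigma^{n,m}\|\|\bm{\zeta}^{n,l+1}\|$. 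Collecting every estimate and dividing through by $\|\bm{\zeta}^{n,l+1}\|$ leads to
$$\|\bm{\zeta}^{n,l+1}\|\le C\sum_{m=0}^{l}\|\bm{\zeta}_\sigma^{n,m}\|+C\tau h^{k+1}+C\tau^{r+1},$$
which is exactly the stated bound after writing $\tau^{r+1}=\tau\cdot\tau^{r}$. No induction or a priori hypothesis is needed at this stage, since \eqref{eq:assumption} is not invoked anywhere in the argument.
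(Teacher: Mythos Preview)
Your argument is correct. The error identity you derive from subtracting \eqref{stagesol} from \eqref{OECDG} is exactly right, and each of the five terms is controlled precisely as you describe: the combined projection-error term via $\bm{W}^{n,l}=(I-P)^{-1}\cdot\tau^{-1}(\bm{\eta}^{n,l+1}-\sum_m c_{lm}\bm{\eta}^{n,m})$ and \eqref{eq:projection_approximation}; the $\mathcal{H}(\bm{\zeta}_\sigma^{n,m},\cdot)$ term via Lemma~\ref{Hcontinuous} and the CFL-type condition $\tau\le Ch^\kappa$ with $\kappa\ge1$; the $\mathcal{H}(\bm{\eta}^{n,m},\cdot)$ term via \eqref{eq:superconvergence}; and the $\bm{\rho}^{n,l}$ term via \eqref{eq:rho}. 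Your observation that the a~priori assumption \eqref{eq:assumption} is nowhere invoked is also important and matches the paper's later remark that ``Lemma~\ref{zeta_zetasigma} does not depend on assumption \eqref{eq:assumption}.''

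The paper itself does not give a proof here; it simply states that the argument follows Lemma~4.11 of \cite{peng2023oedg} and omits the details. Your writeup supplies exactly the kind of direct, self-contained argument one would expect in that reference, so there is no meaningful difference in approach to report.
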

\begin{proof}
    The proof is similar to that of Lemma 4.11 in 
    \cite{peng2023oedg}, and is omitted here.
\end{proof}

Since $\boldsymbol{\zeta}_{\sigma}^{n,l+1} = \boldsymbol{\zeta}^{n,l+1}+\boldsymbol{u}_{\sigma}^{n,l+1}-\boldsymbol{u}_{h}^{n,l+1}$, by the triangle inequality, we have
\begin{equation*}
\|\boldsymbol{\zeta}_{\sigma}^{n,l+1}\| \leq \|\boldsymbol{\zeta}^{n,l+1}\|+\|\boldsymbol{u}_{\sigma}^{n,l+1}-\boldsymbol{u}_{h}^{n,l+1}\|,
\quad \text{for}\ 0 \leq l \leq s-1.
\end{equation*}
Then, by \Cref{lem:u_sigma_u_h} and \Cref{zeta_zetasigma}, an inductive argument (by setting $l=0,1,\cdots,s-1$) allows us to derive that
\begin{equation}\label{zeta_sigma_in}
\begin{aligned}
    \|\boldsymbol{\zeta}_{\sigma}^{n,l+1}\|\leq C\|\boldsymbol{\zeta}_{\sigma}^{n}\|+C\tau(h^{k+1}+\tau^r),\quad \text{for}\ 0 \leq l \leq s-1,
\end{aligned}
\end{equation} 
and 
\begin{equation}\label{zeta_in}
    \begin{aligned}
        \|\boldsymbol{\zeta}^{n,l+1}\|\leq C\|\boldsymbol{\zeta}_{\sigma}^{n}\|+C\tau(h^{k+1}+\tau^r),\quad \text{for}\ 0 \leq l \leq s-1,
\end{aligned}\end{equation}
under the condition
$\|\boldsymbol{\zeta}^{n,l+1}\|_{L^{\infty}(\Omega)}\leq h$ for $0\le l \le s-1$.
We then obtain the following estimate for $\|\mathcal{G}_2^{n,l+1}\|$:
\begin{equation}\label{eq:G2}
    \|\mathcal{G}_2^{n,l+1}\|  \le \frac{\|\boldsymbol{u}_{\sigma}^{n,l+1}-\boldsymbol{u}_{h}^{n,l+1}\|}{\tau}\leq C(\|\boldsymbol{\zeta}_{\sigma}^n\|+h^{k+1}+\tau^r).
\end{equation}
The constant $C$ here is independent of $n$, $h$, $\tau$, and $l$. 
By combining Lemma \ref{G1} with the estimate \eqref{eq:G2},
we establish \eqref{eq:estimate_source}
under the assumption \eqref{eq:assumption}.
The proof of estimate \eqref{eq:estimate_source} is completed.

\subsection*{\em Proof of Assumption \eqref{eq:assumption}}
We now provide the proof of assumption \eqref{eq:assumption} for 
$0 \le n \le \lceil \frac{T}{\tau} \rceil$ 
and $1 \le l \le s$, as required by Step 2 in the outline of the proof of Theorem \ref{thm:optimal_error}. 
The validity of assumption \eqref{eq:assumption} is established by the following lemma.

\begin{lemma}\label{lem:assumption}
Suppose $k>\frac{d}{2}$ and $r>\frac{d}{2}+1$. For integers $n \ge 0$ and $0\le l \le s-1$, 
if there exists a constant $C_{*}$
such that
\begin{align}\label{eq:assumption_2}
\|\bm{\zeta}_\sigma^{n,m}\| \le C(h^{k+1}+\tau^r) \quad
\forall 0\le m \le l,
\end{align}
for $h\le C_{*}$,
then
$
\|\bm{\zeta}^{n,l+1}\|_{L^{\infty}(\Omega)} \le h
$
and
$
\|\bm{\zeta}_\sigma^{n,l+1}\| \le C(h^{k+1}+\tau^r)
$
for $h\le {C}_{*}^l$.
Here ${C}_{*}^l$ is a fixed constant independent of $n$.
\end{lemma}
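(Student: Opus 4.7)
The plan is to combine Lemma \ref{zeta_zetasigma} with a standard inverse inequality to establish the $L^\infty$ bound on $\bm{\zeta}^{n,l+1}$, and then apply Lemma \ref{lem:u_sigma_u_h} together with the triangle inequality to conclude the $L^2$ bound on $\bm{\zeta}_\sigma^{n,l+1}$. The two conclusions must be established in this order because Lemma \ref{lem:u_sigma_u_h} itself requires the $L^\infty$ smallness of $\bm{\zeta}^{n,l+1}$ as a hypothesis.

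First, I would note that Lemma \ref{zeta_zetasigma} holds unconditionally. Invoking it together with the induction hypothesis \eqref{eq:assumption_2}, I obtain immediately
\[
\|\bm{\zeta}^{n,l+1}\| \leq C\sum_{m=0}^{l}\|\bm{\zeta}_\sigma^{n,m}\| + C\tau(h^{k+1}+\tau^r) \leq C(h^{k+1}+\tau^r).
\]
Since $\bm{\zeta}^{n,l+1}$ lives in the piecewise polynomial space $V_h \times W_h$, the standard inverse inequality gives $\|\bm{\zeta}^{n,l+1}\|_{L^\infty(\Omega)} \leq C h^{-d/2}\|\bm{\zeta}^{n,l+1}\|$, hence
\[
\|\bm{\zeta}^{n,l+1}\|_{L^\infty(\Omega)} \leq C\bigl(h^{k+1-d/2} + h^{-d/2}\tau^r\bigr).
\]
The assumption $k>d/2$ ensures the first term is $o(h)$, and under the CFL-type constraint $\tau \leq C h$ from Theorem \ref{stability} together with $r>d/2+1$, the second term satisfies $h^{-d/2}\tau^r \leq C h^{r-d/2} = o(h)$. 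Thus there is a threshold $C_*^l > 0$, depending only on these constants (and in particular independent of $n$), such that $\|\bm{\zeta}^{n,l+1}\|_{L^\infty(\Omega)} \leq h$ whenever $h \leq C_*^l$.

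With the $L^\infty$ smallness in hand, Lemma \ref{lem:u_sigma_u_h} at stage index $l+1$ yields
\[
\|\bm{u}_\sigma^{n,l+1}-\bm{u}_h^{n,l+1}\| \leq C\tau\bigl(\|\bm{\zeta}^{n,l+1}\| + h^{k+1}\bigr) \leq C\tau(h^{k+1}+\tau^r).
\]
Then by the decomposition $\bm{\zeta}_\sigma^{n,l+1}=\bm{\zeta}^{n,l+1}+(\bm{u}_\sigma^{n,l+1}-\bm{u}_h^{n,l+1})$ and the triangle inequality,
\[
\|\bm{\zeta}_\sigma^{n,l+1}\| \leq \|\bm{\zeta}^{n,l+1}\| + \|\bm{u}_\sigma^{n,l+1}-\bm{u}_h^{n,l+1}\| \leq C(1+\tau)(h^{k+1}+\tau^r) \leq C(h^{k+1}+\tau^r),
\]
which delivers the second conclusion.

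The main obstacle is the passage from the $L^2$ estimate to the $L^\infty$ bound $\|\bm{\zeta}^{n,l+1}\|_{L^\infty(\Omega)} \leq h$: it is precisely here that the regularity hypotheses $k>d/2$ and $r>d/2+1$ are needed to compensate for the loss of $h^{-d/2}$ in the inverse inequality. A subtle point is to track that the constants in both Lemma \ref{zeta_zetasigma} and Lemma \ref{lem:u_sigma_u_h} are uniform in $n$, so that the resulting smallness threshold $C_*^l$ on $h$ is indeed independent of the time level; this uniformity is what will ultimately allow the outer induction on $n$ (in Step 2 of the global proof) to close.
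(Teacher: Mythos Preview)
Your proposal is correct and follows essentially the same route as the paper: first apply Lemma~\ref{zeta_zetasigma} (which is unconditional) together with the hypothesis~\eqref{eq:assumption_2} to get the $L^2$ bound on $\bm{\zeta}^{n,l+1}$, upgrade to $L^\infty$ via the inverse inequality and the CFL-type constraint, and then invoke Lemma~\ref{lem:u_sigma_u_h} plus the triangle inequality for $\bm{\zeta}_\sigma^{n,l+1}$. The only cosmetic difference is that the paper writes the CFL condition as $\tau \le C_{\rm CFL}h^{\kappa}$ with $\kappa\ge 1$ and explicitly records the resulting threshold as $C_*^l=\min\{C_*,\,C_{l+1}^{-1/(\min\{k,r-1\}-d/2)},\,C_{**,l}\}$, thereby absorbing the smallness requirement $h\le C_{**,l+1}$ coming from Lemma~\ref{lem:u_sigma_u_h}; you should make sure your threshold $C_*^l$ incorporates that constant as well.
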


\begin{proof}
It is worth noting that Lemma \ref{zeta_zetasigma} does not depend on assumption \eqref{eq:assumption}.
By employing Lemma \ref{zeta_zetasigma} and assumption \eqref{eq:assumption_2}, we obtain
\begin{equation}\label{eq:zeta_L2}
\|\bm{\zeta}^{n,l+1}\| \le C(h^{k+1}+\tau^r).
\end{equation}
Since the components of $\bm{\zeta}^{n,l+1}$ are piecewise polynomials, we can estimate the infinity norm using the inverse inequality:
\begin{align*}
\|\bm{\zeta}^{n,l+1}\|_{L^{\infty}(\Omega)}
&\le
\sup_{K}
\|\bm{\zeta}^{n,l+1}\|_{L^{\infty}(K)}
\le
\sup_{K}
Ch^{-\frac{d}{2}}
\|\bm{\zeta}^{n,l+1}\|_{L^{2}(K)}\\
&\le
Ch^{-\frac{d}{2}}
\|\bm{\zeta}^{n,l+1}\|.
\end{align*}
Combining this with \eqref{eq:zeta_L2} and the CFL-type condition $\tau \le C_{\rm CFL}h^{\kappa}$, we have
\begin{align*}
\|\bm{\zeta}^{n,l+1}\|_{L^{\infty}(\Omega)}
&\le 
Ch^{-\frac{d}{2}}
(h^{k+1}+(C_{\rm CFL}h^{\kappa})^r)
\le
C_{l+1}h^{\min\{k+1, r\}-\frac{d}{2}}\\
&\le h \quad \text{if } 
h \le C_{l+1}^{-1/(\min\{k,r-1\}-d/2)},
\end{align*}
where we have used the fact that $\kappa\ge 1$.
This validates assumption \eqref{eq:assumption}.
Therefore, invoking Lemma \ref{lem:u_sigma_u_h}, we have
\begin{equation*}
    \|\bm{\zeta}_{\sigma}^{n,l+1}\| 
    \le 
    \|\bm{\zeta}^{n,l+1}\|+
    \|\bm{u}_{\sigma}^{n,l+1}-\bm{u}_{h}^{n,l+1}\|
    \le 
    C(h^{k+1}+\tau^r)
\end{equation*}
if $h \le C_{*}^l:=\min\{C_{*}, C_{l+1}^{-1/(\min\{k,r-1\}-d/2)}, C_{**,l}\}$.
The proof of Lemma \ref{lem:assumption} is thus complete.
\end{proof}

With the help of Lemma \ref{lem:assumption}, we can easily prove assumption \eqref{eq:assumption} using an inductive approach. Since \eqref{eq:assumption_2} holds for $n=l=0$ (see \eqref{eq:initial_zeta}), we have
$
\|\bm{\zeta}^{0,1}\|_{L^{\infty}(\Omega)} \le h
$
and
$
\|\bm{\zeta}_\sigma^{0,1}\| \le C(h^{k+1}+\tau^r).
$
Therefore, \eqref{eq:assumption_2} is valid for $n=0$ and $l=1$. Consequently, we obtain 
$
\|\bm{\zeta}^{0,2}\|_{L^{\infty}(\Omega)} \le h
$
and
$
\|\bm{\zeta}_\sigma^{0,2}\| \le C(h^{k+1}+\tau^r).
$
By continuing this process inductively, we can show that
$$
\|\bm{\zeta}^{0,l}\|_{L^{\infty}(\Omega)} \le h
\quad
\forall 1\le l \le s.
$$
Note that $\bm{\zeta}_{\sigma}^{1,0}=\bm{\zeta}_{\sigma}^{0,s}$ by definition, and we have obtained $\|\bm{\zeta}_{\sigma}^{0,s}\| \le C(h^{k+1}+\tau^r)$ by setting $n=0$ and $l=s-1$. Thus, \eqref{eq:assumption_2} is also true for $n=1$ and $l=0$. Continuing this procedure inductively, we can similarly show that
$$
\|\bm{\zeta}^{1,l}\|_{L^{\infty}(\Omega)} \le h
\quad
\forall 1\le l \le s.
$$
Therefore, assumption \eqref{eq:assumption} is proven by induction.

    \section{Numerical examples}\label{sec:numerical}

This section presents benchmark tests in 1D and 2D to validate the optimal error estimates and the effectiveness of the OECDG method. For accuracy tests, we use the $(k+1)$th-order explicit RK time discretization for the $\mathbb{P}^k$-based OECDG method, while a third-order explicit strong-stability-preserving (SSP) RK method is used for other tests. In 1D, the time step $\tau$ is set as $\frac{C_{\rm CFL}h}{\beta}$, where $\beta$ denotes the maximum wave speed. In 2D, we use $\tau = \frac{C_{\rm CFL}}{{\beta_x}/{h_x} + {\beta_y}/{h_y}}$, with $\beta_x$ and $\beta_y$ as the wave speeds in the $x$ and $y$ directions, respectively. Unless stated otherwise, $C_{\rm CFL}$ is set to 0.52, 0.3, and 0.2 for discontinuous cases, while for accuracy tests, values of 0.45, 0.33, and 0.31 are used for $\mathbb{P}^1$, $\mathbb{P}^2$, and $\mathbb{P}^3$ elements, respectively. %The results presented correspond to $u_h^C$.

\subsection{1D linear advection equation}
We solve $u_t + u_x = 0$ on $\Omega = [0,1]$ with periodic boundary conditions.

\begin{exmp}
{\rm 
This smooth test evaluates convergence rates with the initial condition $u_0(x) = \sin(2\pi x)$. Table \ref{linear_accuracy} shows the errors and convergence rates for the $\mathbb{P}^k$-based OECDG method at $t = 1.1$, confirming $(k+1)$-order accuracy across three norms, which aligns with our theoretical error estimates.} 

\begin{table}[!tbh]
\centering
\caption{Errors and convergence rates of $\mathbb{P}^k$-based OECDG for 1D advection equation.}
\begin{center}
\begin{tabular}{c|c|c|c|c|c|c|c}
\bottomrule[1.0pt]
$k$ & $N_x$ & $l^1$ errors  & rate & $l^2$ errors & rate & $l^\infty$ errors & rate \\
\hline
\multirow{5}*{$1$}
& 80  & 1.82e-3 & -- & 8.97e-4 & -- & 9.33e-4 & -- \\ 
& 160 & 4.01e-4 & 2.18 & 1.96e-4 & 2.19 & 2.02e-4 & 2.21 \\ 
& 320 & 9.66e-5 & 2.06 & 4.71e-5 & 2.06 & 4.76e-5 & 2.08 \\ 
& 640 & 2.38e-5 & 2.02 & 1.17e-5 & 2.01 & 1.18e-5 & 2.01 \\ 
& 1280 & 5.92e-6 & 2.01 & 2.91e-6 & 2.00 & 2.93e-6 & 2.01 \\ 
\hline
\multirow{5}*{$2$}
& 80  & 1.16e-5 & -- & 5.41e-6 & -- & 4.93e-6 & -- \\ 
& 160 & 1.15e-6 & 3.33 & 5.57e-7 & 3.28 & 5.46e-7 & 3.18 \\ 
& 320 & 1.27e-7 & 3.17 & 6.20e-8 & 3.17 & 6.02e-8 & 3.18 \\ 
& 640 & 1.52e-8 & 3.07 & 7.39e-9 & 3.07 & 7.12e-9 & 3.08 \\ 
& 1280 & 1.87e-9 & 3.02 & 9.11e-10 & 3.02 & 8.85e-10 & 3.01 \\ 
\hline
\multirow{4}*{$3$}    
& 80  & 5.26e-8 & -- & 2.66e-8 & -- & 3.19e-8 & -- \\ 
& 160 & 2.40e-9 & 4.45 & 1.31e-9 & 4.35 & 1.57e-9 & 4.35 \\ 
& 320 & 1.36e-10 & 4.14 & 7.52e-11 & 4.12 & 8.53e-11 & 4.20 \\ 
& 640 & 8.41e-12 & 4.01 & 4.59e-12 & 4.04 & 4.94e-12 & 4.11 \\ 
& 1280 & 5.27e-13 & 4.00 & 2.85e-13 & 4.01 & 2.99e-13 & 4.05 \\ 
\toprule[1.0pt]
\end{tabular}
\label{linear_accuracy}
\end{center}
\end{table}
\end{exmp}

\begin{exmp}\label{linear_discontinuity}
{\rm 
This discontinuous test case is initialized with 
$
u_0(x) =  
    \sin(2\pi x)$ for $x \in [0.3,0.8]$ and 
    $\cos(2\pi x) - 0.5$ for $x \notin [0.3,0.8]$, 
using 256 uniform cells to demonstrate our method's capability to resolve discontinuities without spurious oscillations. Figure \ref{Fig: 3.2a} presents the oscillation-free results, confirming that the OECDG method accurately captures discontinuities and that higher-order elements enhance resolution. Moreover, the new, more compact, dual damping mechanism provides slightly better resolution than the original one from the non-central OEDG method \cite{peng2023oedg}.
}

\begin{figure}[!tbh]
\centering 
\begin{subfigure}{0.325\linewidth}
\includegraphics[width=1\linewidth]{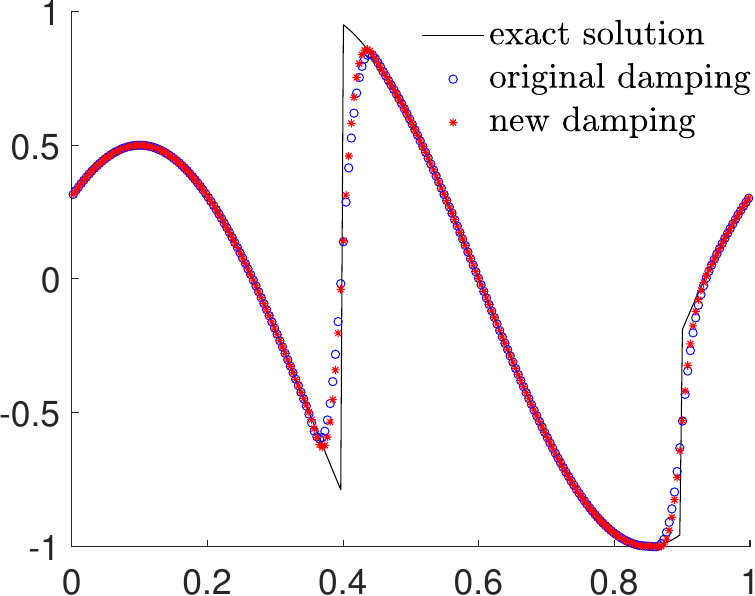}
%\subcaption{$\mathbb{P}^1$}
\end{subfigure}
\begin{subfigure}{0.325\linewidth}
\includegraphics[width=1\linewidth]{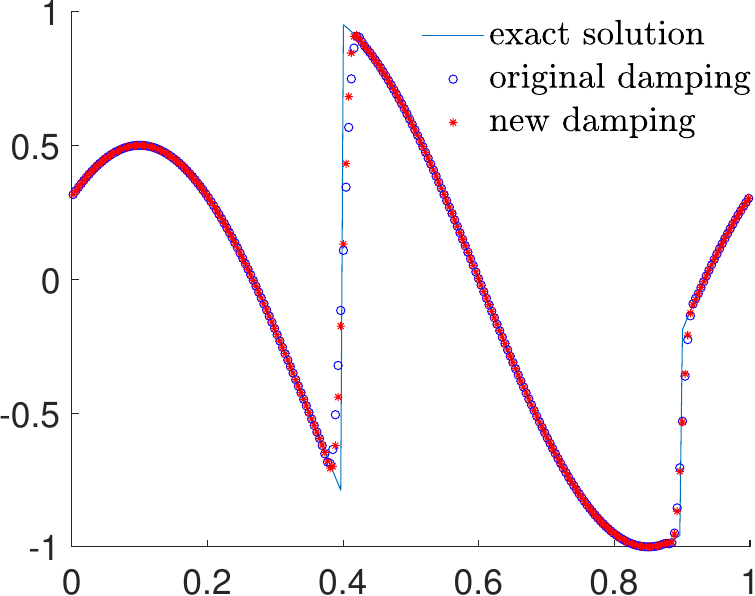}
%\subcaption{$\mathbb{P}^2$}
\end{subfigure}
\begin{subfigure}{0.325\linewidth}
\includegraphics[width=1\linewidth]{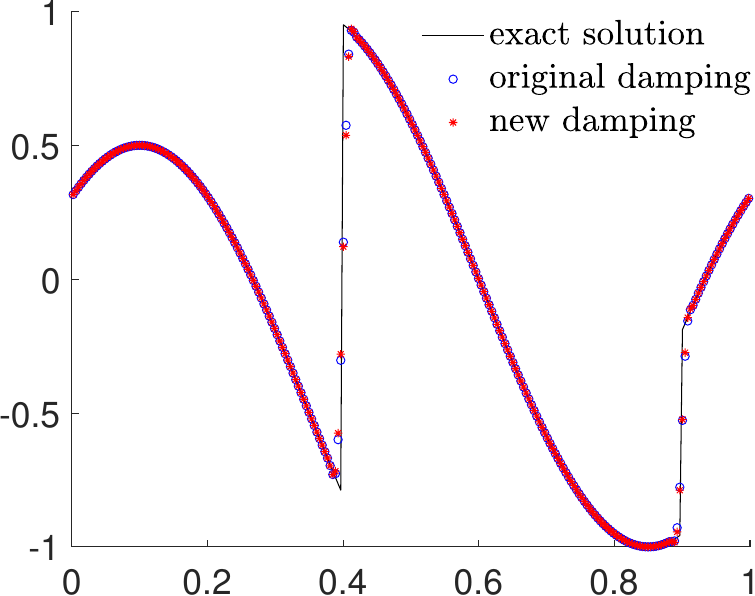}
%\subcaption{$\mathbb{P}^3$}
\end{subfigure}
\caption{Results of the $\mathbb{P}^1$, $\mathbb{P}^2$, and $\mathbb{P}^3$ OECDG methods (from left to right) at $t=1.1$.}
\label{Fig: 3.2a}
\end{figure}
\end{exmp}

\subsection{1D inviscid Burgers equation}
Next, we solve the Burgers equation $u_t + (\frac{u^2}{2})_x = 0$ with $u(x,0)=\sin(x)+0.5$ on the periodic domain $\Omega = [0,2\pi]$.

\begin{exmp}
{\rm 
This test runs the simulation up to $t = 0.6$, while the solution remains smooth. Table \ref{burgers} presents the errors and convergence rates, confirming that the $\mathbb{P}^k$-based OECDG method achieves $(k+1)$th-order accuracy even with nonlinear flux.
}
\begin{table}[!tbh]
\centering
\caption{Errors and convergence rates of $\mathbb{P}^k$-based OECDG for 1D Burgers equation.}
\begin{center}
\begin{tabular}{c|c|c|c|c|c|c|c} 
\bottomrule[1.0pt]
$k$ & $N_x$ & $l^1$ errors & rate & $l^2$ errors & rate & $l^{\infty}$ errors & rate \\
\hline
\multirow{5}*{$1$}
& 128 & 7.38e-4 & -- & 5.90e-4 & -- & 1.83e-3 & -- \\
& 256 & 1.66e-4 & 2.15 & 1.30e-4 & 2.18 & 3.91e-4 & 2.23 \\
& 512 & 4.05e-5 & 2.03 & 3.12e-5 & 2.06 & 9.17e-5 & 2.09 \\
& 1024 & 9.99e-6 & 2.02 & 7.55e-6 & 2.04 & 2.16e-5 & 2.08 \\
& 2048 & 2.48e-6 & 2.01 & 1.86e-6 & 2.02 & 5.25e-6 & 2.04 \\
\hline
\multirow{5}*{$2$}
& 128 & 7.55e-6 & -- & 7.61e-6 & -- & 2.29e-5 & -- \\
& 256 & 9.11e-7 & 3.05 & 8.97e-7 & 3.09 & 2.58e-6 & 3.15 \\
& 512 & 1.14e-7 & 3.00 & 1.12e-7 & 3.00 & 3.52e-7 & 2.87 \\
& 1024 & 1.43e-8 & 2.99 & 1.41e-8 & 2.99 & 4.62e-8 & 2.93 \\
& 2048 & 1.79e-9 & 3.00 & 1.77e-9 & 2.99 & 5.94e-9 & 2.96 \\
\hline
\multirow{5}*{$3$}    
& 128 & 1.14e-7 & -- & 1.89e-7 & -- & 7.91e-7 & -- \\
& 256 & 5.16e-9 & 4.46 & 8.40e-9 & 4.49 & 3.76e-8 & 4.40 \\
& 512 & 2.69e-10 & 4.26 & 4.25e-10 & 4.30 & 1.95e-9 & 4.27 \\
& 1024 & 1.54e-11 & 4.12 & 2.38e-11 & 4.16 & 1.09e-10 & 4.16 \\
& 2048 & 1.04e-12 & 3.89 & 1.41e-12 & 4.07 & 6.65e-12 & 4.04 \\
\toprule[1.0pt]
\end{tabular}
\label{burgers}
\end{center}
\end{table}
\end{exmp} 

\begin{exmp}\label{burgers_dis}
{\rm 
To evaluate the OECDG method’s capability in handling discontinuities, we run the simulation up to $t = 2.2$ on a mesh of 256 uniform cells. The results are presented in Figure \ref{Fig: 3.1}. The OECDG method with the original damping mechanism from \cite{peng2023oedg} shows a minor overshoot, particularly in the $\mathbb{P}^3$ solution, while the new dual damping mechanism prevents spurious overshoots and oscillations.
}
\begin{figure}[!tbh]
\centering 
\begin{subfigure}{0.325\linewidth}
\includegraphics[width=1\linewidth]{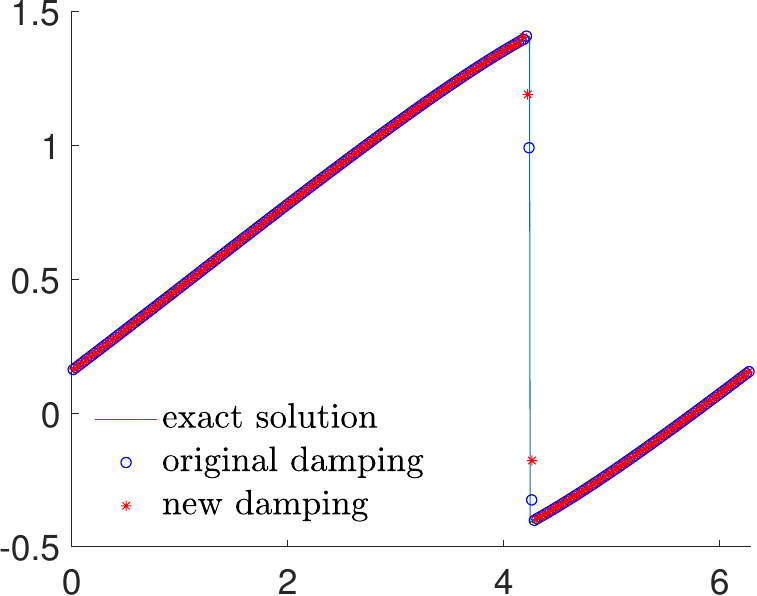}
%\subcaption{}
\end{subfigure}
\begin{subfigure}{0.325\linewidth}
\includegraphics[width=1\linewidth]{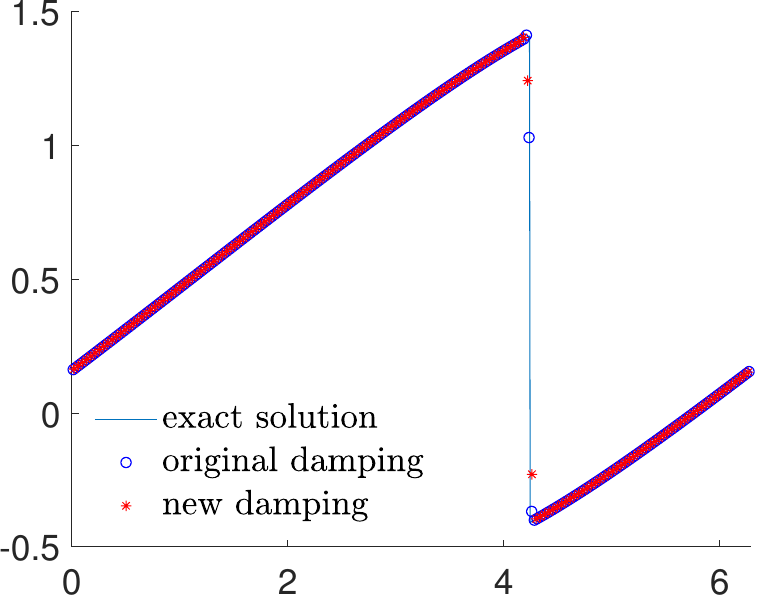}
%\subcaption{$\mathbb{P}^2$}
\end{subfigure}
\begin{subfigure}{0.325\linewidth}
\includegraphics[width=1\linewidth]{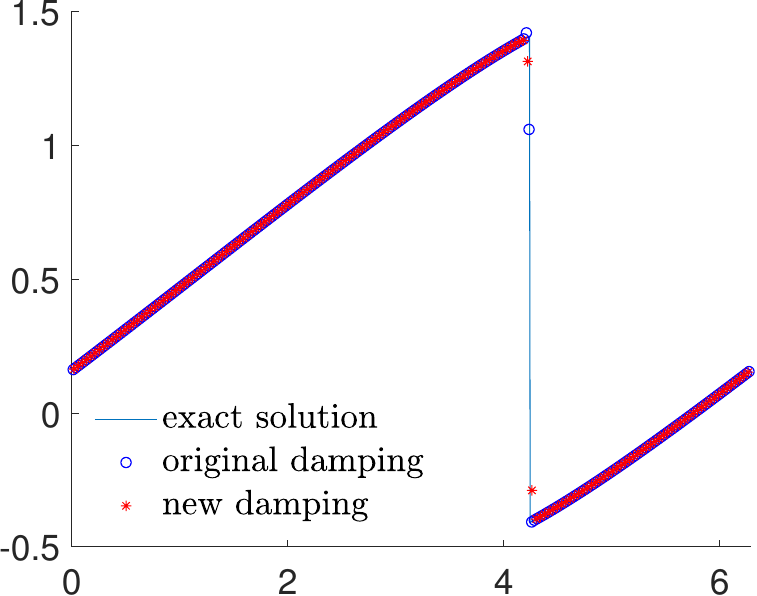}
%\subcaption{$\mathbb{P}^3$}
\end{subfigure}
\caption{OECDG solutions for 1D Burgers equation. Form left to right:  $\mathbb{P}^1$, $\mathbb{P}^2$, and $\mathbb{P}^3$.}
\label{Fig: 3.1}
\end{figure}
\end{exmp}

\subsection{Lighthill--Whitham--Richards traffic flow model \cite{lu2008explicit}}
In this traffic flow model, $f(u)$ is given by piecewise quadratic polynomials as follows:
\begin{itemize}
    \item For $u \in [0, 50]$, $f(u) = -0.4u^2 + 100u$,
    \item For $u \in [50, 100]$, $f(u) = -0.1u^2 + 15u + 3500$,
    \item For $u \in [100, 350]$, $f(u) = -0.024u^2 - 5.2u + 4760$.
\end{itemize}

\begin{exmp}\label{trafficflow}
{\rm 
This test simulates traffic flow on a 20 km one-way road affected by an incident, traffic lights, and entry control measures. Initially, vehicles are concentrated between 10 km and 15 km with a density of 350 vehicles/km due to an accident. To the left, traffic flows at 50 vehicles/km, while to the right, density is modeled as $70(20 - x)$. For the first 10 minutes, vehicle entry is restricted, causing congestion. After that, entry resumes with a higher density of 75 vehicles/km for 20 minutes, and finally returns to 50 vehicles/km. Traffic lights at the exit alternate between green (0 vehicles/km) for two minutes and red (350 vehicles/km) for one minute. 
For the simulation, the road is divided into 800 uniform sections. Figure \ref{fig: traffic} shows the third-order OECDG solution with $C_{\rm CFL} = 0.2$, compared to a reference solution from the local Lax--Friedrichs scheme using 80,000 cells. The OECDG method effectively resolves high-frequency waves without spurious oscillations.
}
\begin{figure}[!tbh]
    \centering 
    \begin{subfigure}{0.325\linewidth}
        \includegraphics[width=1\linewidth]{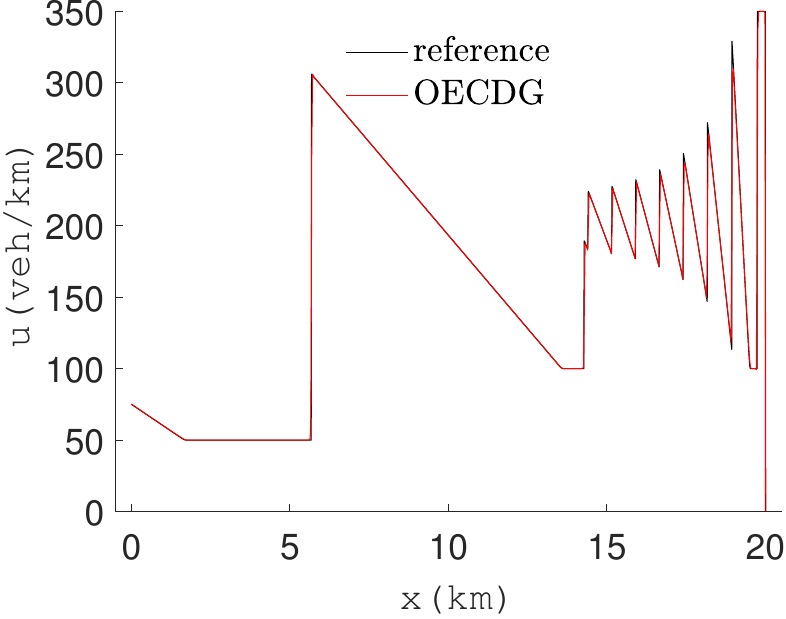}
    \end{subfigure}
    \begin{subfigure}{0.325\linewidth}
        \includegraphics[width=1\linewidth]{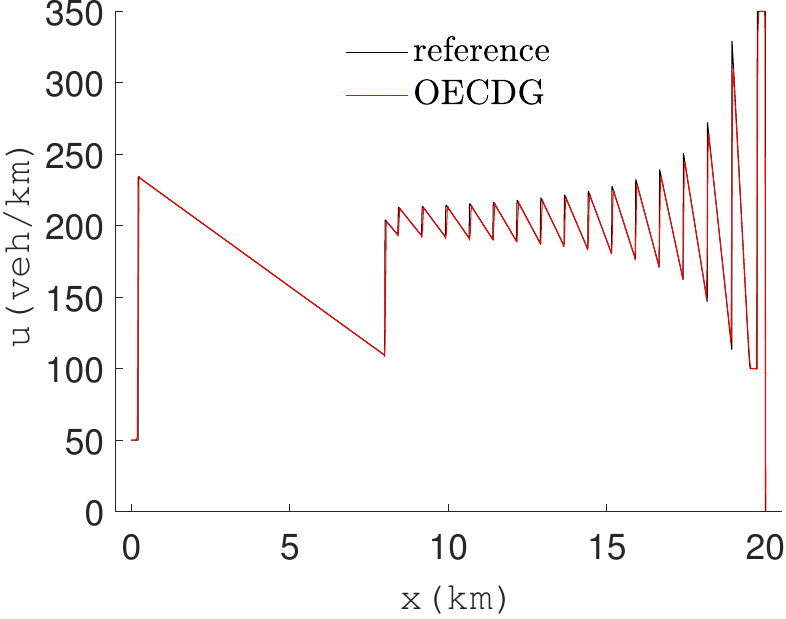}
    \end{subfigure}
    \begin{subfigure}{0.325\linewidth}
        \includegraphics[width=1\linewidth]{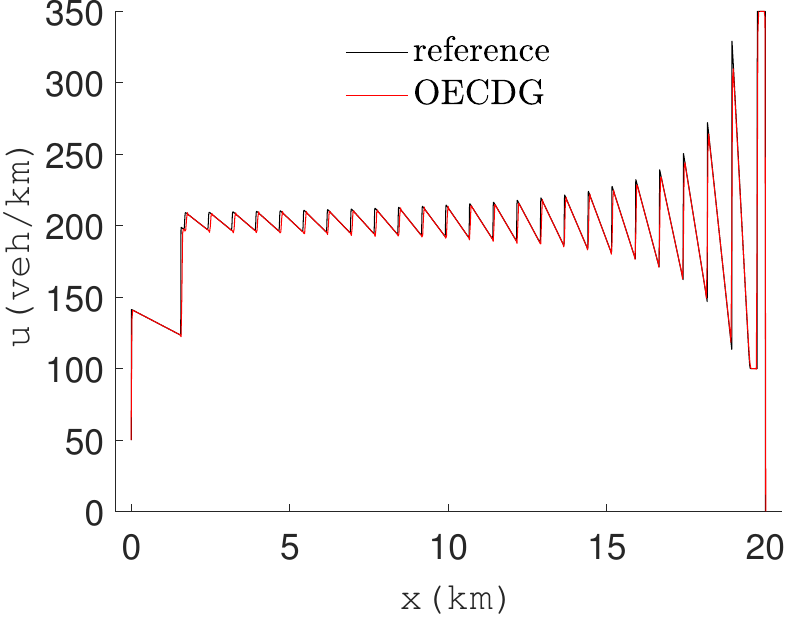}
    \end{subfigure}
    \caption{Density profiles of traffic flow at $t = 0.5h$, $1.0h$, and $1.5h$ (left to right).}
    \label{fig: traffic}	
\end{figure}
\end{exmp}

	\subsection{1D compressible Euler equations}
We consider several benchmark examples of the Euler equations ${\bm u}_t + ( {\bm f}({\bm u}) )_x={\bf 0}$, where ${\bm u}=(\rho, \rho v, E)^\top$ and ${\bm f}({\bm u}) = (\rho v, \rho v^2 + p, (E+p)v)^\top$, with $\rho$ as density, $v$ as velocity, $p$ as pressure, and $E=\frac{1}{2} \rho v^2 + \frac{p}{\gamma - 1}$ as total energy. The adiabatic index $\gamma$ is 1.4 unless otherwise specified.

\begin{exmp}[Shu--Osher problem]
\label{shu-osher}
{\rm 
The initial data includes a sinusoidal wave $(\rho_0, v_0, p_0) = (1+0.2\sin(5x), 0, 1)$ for $x > -4$ and a shock at $x = -4$ with $(\rho_0, v_0, p_0) = (3.857143, 2.629369, 10.33333)$ for $x < -4$, generating complex wave structures. This problem is solved on $\Omega = [-5, 5]$ using 200 uniform cells. 
We compare the OECDG results at $t = 1.8$ to a reference solution from the local Lax--Friedrichs scheme with 500,000 cells. As shown in Figure \ref{fig: Shuosher}, higher-order OECDG methods effectively capture intricate wave features, and the new dual damping operator offers slightly improved resolution of high-frequency waves.

\begin{figure}[!tbh]
    \centering 
    \begin{subfigure}{0.325\linewidth}
        \includegraphics[width=1\linewidth]{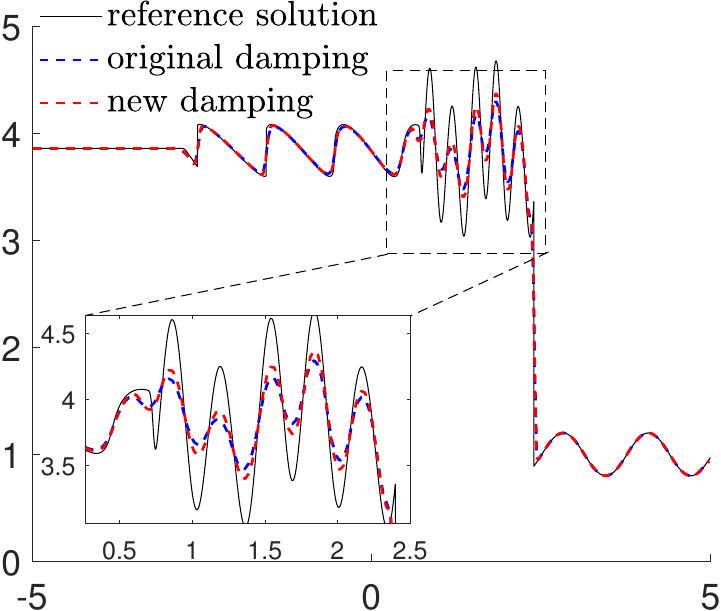}
        %\subcaption{$\mathbb{P}^1$}
    \end{subfigure}
    \begin{subfigure}{0.325\linewidth}
        \includegraphics[width=1\linewidth]{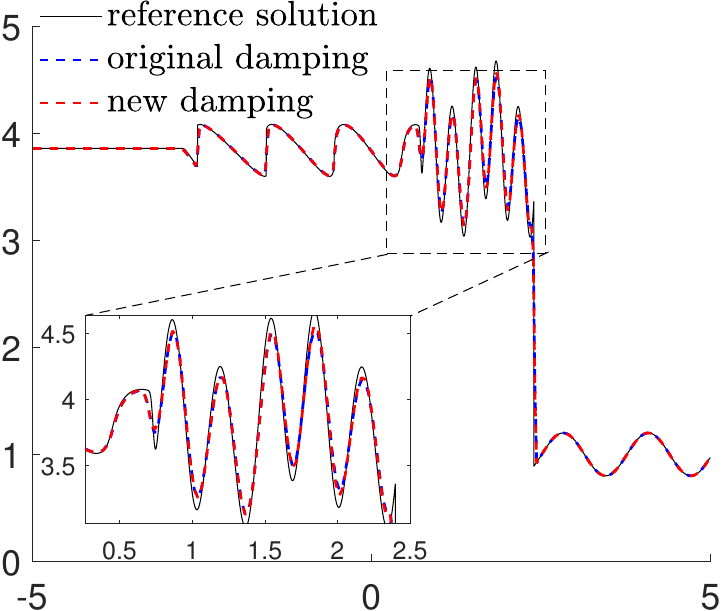}
        %\subcaption{$\mathbb{P}^2$}
    \end{subfigure}
    \begin{subfigure}{0.325\linewidth}
        \includegraphics[width=1\linewidth]{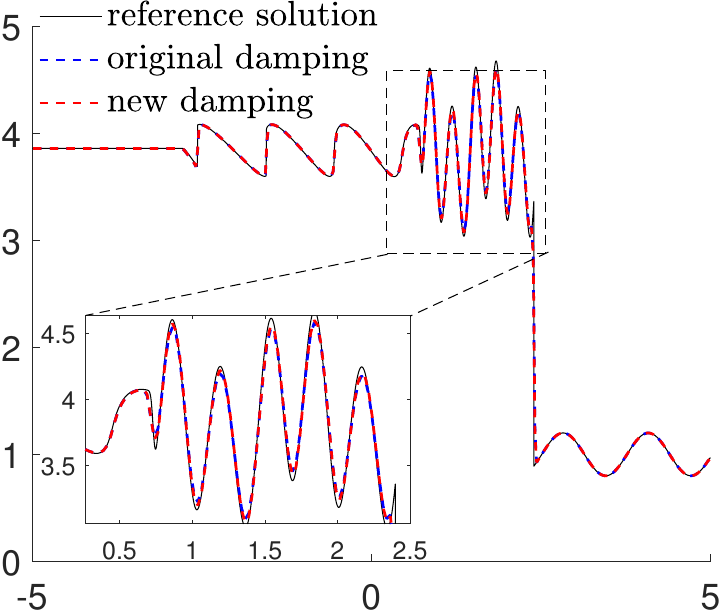}
        %\subcaption{$\mathbb{P}^3$}
    \end{subfigure}
    \caption{Density at $t = 1.8$ computed by $\mathbb{P}^1$, $\mathbb{P}^2$, and $\mathbb{P}^3$ OECDG (from left to right).}
    \label{fig: Shuosher}	
\end{figure}
}
\end{exmp}

\begin{exmp}[Sod problem]
\label{Sod}
{\rm
We examine the Sod problem on the domain $[-5, 5]$ with outflow boundary conditions. The initial conditions are $(\rho_0, v_0, p_0) = (0.125, 0, 0.1)$ for $x > 0$ and $(1, 0, 1)$ for $x < 0$. Figure \ref{fig: SOD} shows the density at $t = 1.3$ computed using the $\mathbb{P}^3$-based OECDG method with 256 cells, demonstrating desirable oscillation-free resolution of discontinuities.
}
\end{exmp}

\begin{exmp}[two blast waves]
\label{twoblastwave}
{\rm 
We simulate two interacting blast waves on $\Omega = [0, 1]$ with reflective boundaries. The initial conditions are $(\rho_0, v_0, p_0) = (1, 0, 10^3)$ for $0 < x < 0.1$, $(1, 0, 10^{-2})$ for $0.1 < x < 0.9$, and $(1, 0, 10^2)$ for $x > 0.9$. Figure \ref{fig: tb} shows the density at $t = 0.038$ from the $\mathbb{P}^2$-based OECDG method with 960 cells, compared to a reference solution from the local Lax--Friedrichs scheme with 300,000 cells. The OECDG method, without characteristic decomposition, achieves superior resolution compared to the OFDG method in \cite{liu2022essentially} and avoids nonphysical oscillations.
}
\end{exmp}

	\begin{exmp}[\rm Sedov blast problem \cite{zhang2010positivity}]
		\rm 
		\label{sedov}
The OECDG method’s ability to mitigate spurious numerical oscillations enables the solution of low density, low-pressure problems without requiring positivity-preserving limiters. One example is the Sedov blast problem, where initial conditions set density, velocity, and total energy to 1, 0, and $10^{-12}$, respectively, except in the central cell. The central cell has the same density and velocity but a much higher total energy, scaled by $h$ as $E = \frac{3200000}{h}$. The domain spans $[-2, 2]$ with 201 cells and outflow boundary conditions. 
Figure \ref{fig: sedov} shows the density at $t = 0.001$ computed with the fourth-order OECDG method, compared to a reference solution from the local Lax--Friedrichs scheme using 8005 cells.

		\begin{figure}[!tbh]
			
			\begin{minipage}[t]{0.9\linewidth}
				\centering 
			\begin{subfigure}{0.45\linewidth}{\includegraphics[width = 1\linewidth]{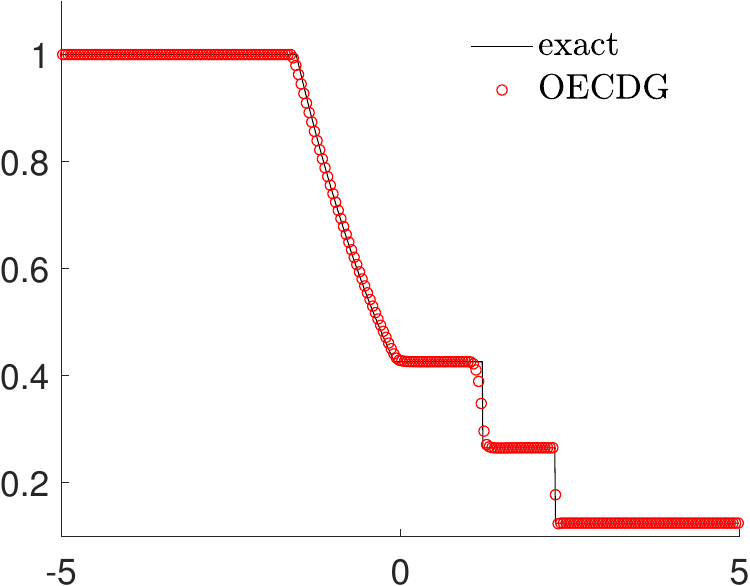}
					\subcaption{$\mathbb P^3$, Sod problem at $t = 1.3$.}
					\label{fig: SOD}	}
			\end{subfigure}
			\begin{subfigure}{0.45\linewidth}{\includegraphics[width = 1\linewidth]{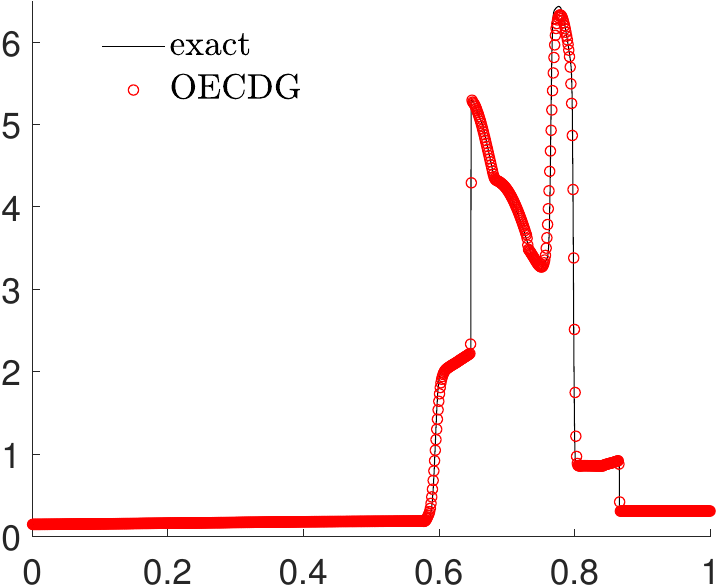}
			\subcaption{$\mathbb P^2$, two blast wave at $t = 0.038$.}
						\label{fig: tb}	}
			\end{subfigure}
			\end{minipage}
			\\
		
			\begin{minipage}[t]{0.9\linewidth}
			\centering
			\begin{subfigure}{0.45\linewidth}{\includegraphics[width = 1\linewidth]{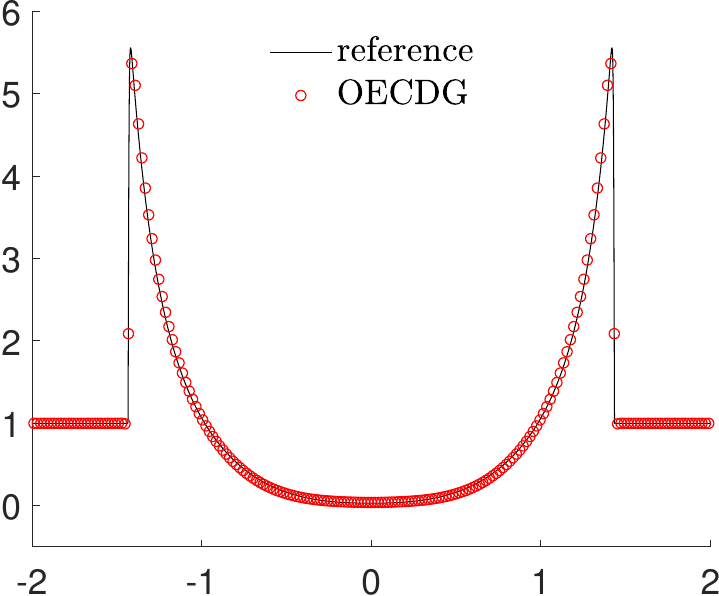}
				\subcaption{$\mathbb P^3$,  Sedov problem at $t = 10^{-3}$.}
				\label{fig: sedov}	}
			\end{subfigure}
			\begin{subfigure}{0.45\linewidth}{\includegraphics[width = 1\linewidth]{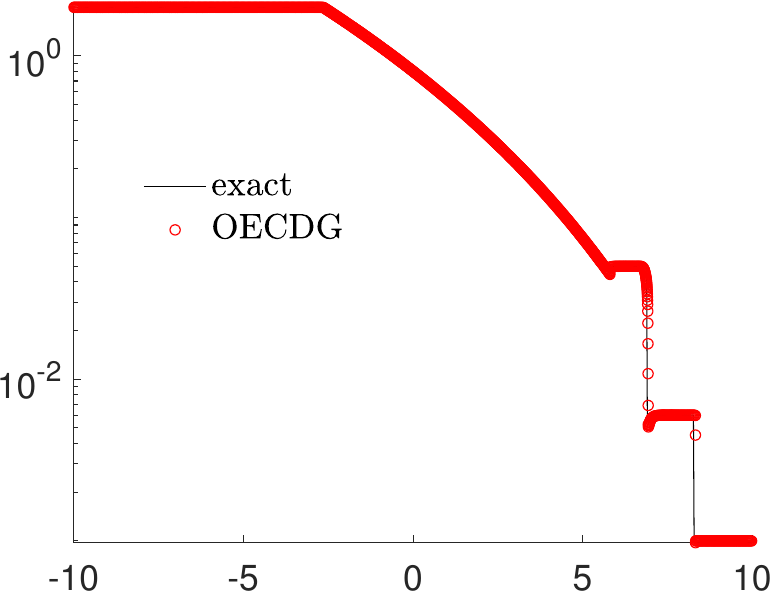}
					\subcaption{$\mathbb P^3$,  Leblanc problem at $t =10^{-4}$.}
					\label{fig: leblanc}	}
			\end{subfigure}
		\end{minipage}
			\caption{Densities of OECDG solutions for \Cref{Sod} to \ref{leblanc}.}
		\end{figure}
	\end{exmp}

\begin{exmp}[Leblanc problem]
\label{leblanc}
\rm
The Leblanc shock tube problem is set on $\Omega = [-10, 10]$ with outflow boundary conditions and initial values of $(\rho_0, v_0, p_0) = (2, 0, 10^9)$ for $x < 0$ and $(0.001, 0, 1)$ for $x > 0$. This setup involves an extreme pressure ratio of $10^9$ and a strong shock in a near-vacuum region, posing challenges for numerical methods. Figure \ref{fig: leblanc} shows the density computed by the $\mathbb{P}^3$-based OECDG method on a natural logarithmic scale at $t = 0.0001$ with 6400 cells, demonstrating robust performance without positivity-preserving limiters.
\end{exmp}

\subsection{2D linear advection equation}
In this subsection, we solve $u_t + u_x + u_y = 0$ on $\Omega = [0, 1] \times [0, 1]$ with periodic boundary conditions.

\begin{exmp}\label{2Dlinear_smooth} \rm To validate optimal error estimates, we simulate with a smooth initial condition $u_0(x, y) = \sin^2(\pi(x + y))$ up to $t = 1.1$. As shown in \Cref{2Dlinear_accuracy}, the $\mathbb{P}^k$-based OECDG methods achieve convergence rates near $k+1$ with mesh refinement, confirming our theoretical analysis. 

				\begin{table}[!tbh]
			\centering
			% h-here,t-top,b-bottom,优先级依次下降
			\caption{Errors and convergence rates of $\mathbb{P}^k$-based OECDG for 2D advection equation.}
			\begin{center}
				% 居中
				%\small
				\begin{tabular}{c|c|c|c|c|c|c|c} % 三线表不能有竖线,l-left,c-center,r-right
					\bottomrule[1.0pt]
					%三线表-top 线
					$k$ &	$N_x$ &$l^1$ errors  & rate & $l^2$ errors  & rate & $l^\infty$ errors  & rate   \\
					\hline
					%三线表-middle 线		
					
					\multirow{5}*{$1$}
				%&128$\times$128&6.60e-03&-&7.84e-03&-&1.402e-02&-\\ 
				%&256$\times$256&1.10e-03&2.59&1.25e-03&2.65&2.02e-03&2.80\\ 
                 &256$\times$256&1.10e-03&--&1.25e-03&--&2.02e-03&--\\ 
				&512$\times$512&1.61e-04&2.76&1.90e-04&2.72&3.61e-04&2.48\\ 
				&1024$\times$1024&2.79e-05&2.53&3.28e-05&2.54&6.52e-05&2.47\\ 
				&2048$\times$2048&6.09e-06&2.20&6.85e-06&2.26&1.27e-05&2.36\\ 
				&4096$\times$4096&1.45e-06&2.07&1.62e-06&2.09&2.72e-06&2.23\\
				\hline
				\multirow{4}*{$2$}
				&128$\times$128&1.92e-04&--&2.12e-04&--&3.06e-04&--\\ 
				&256$\times$256&1.24e-05&3.95&1.37e-05&3.95&2.04e-05&3.91\\ 
				&512$\times$512&8.25e-07&3.91&9.04e-07&3.92&1.45e-06&3.82\\ 
				&1024$\times$1024&5.80e-08&3.83&6.33e-08&3.84&1.12e-07&3.69\\ 
				&2048$\times$2048&4.47e-09&3.70&4.89e-09&3.69&9.74e-09&3.53\\
				\hline
				\multirow{3}*{$3$}    
				&128$\times$128&4.22e-06&--&4.97e-06&--&9.91e-06&--\\ 
				&256$\times$256&1.06e-07&5.31&1.18e-07&5.40&2.08e-07&5.58\\ 
				&512$\times$512&3.14e-09&5.08&3.51e-09&5.06&8.19e-09&4.67\\ 
				&1024$\times$1024&1.01e-10&4.96&1.14e-10&4.94&3.55e-10&4.53\\
					\toprule[1.0pt]
					%三线表-底线
				\end{tabular}
				\label{2Dlinear_accuracy}
			\end{center}
		\end{table}

	\end{exmp}
\begin{exmp}\label{linear2D_dis}
\rm
In this example, the initial value forms a pentagram shape, defined as $u_0(x, y) = 1$ for $\sqrt{x^2 + y^2} \leq \frac{3 + 3^{\sin(5\theta)}}{8}$ and $0$ elsewhere, where $\theta = \arccos\left(\frac{x}{r}\right)$ for $y \geq 0$ and $2\pi - \arccos\left(\frac{x}{r}\right)$ for $y < 0$. 
Figure \ref{Fig: linearpentagram} shows OECDG solutions with $320 \times 320$ cells, capturing the pentagram’s propagation without spurious oscillations. As expected, higher-degree elements provide better resolution.

\begin{figure}[!thb]
    \centering 
    \begin{subfigure}{0.325\linewidth}
        \includegraphics[width=1\linewidth]{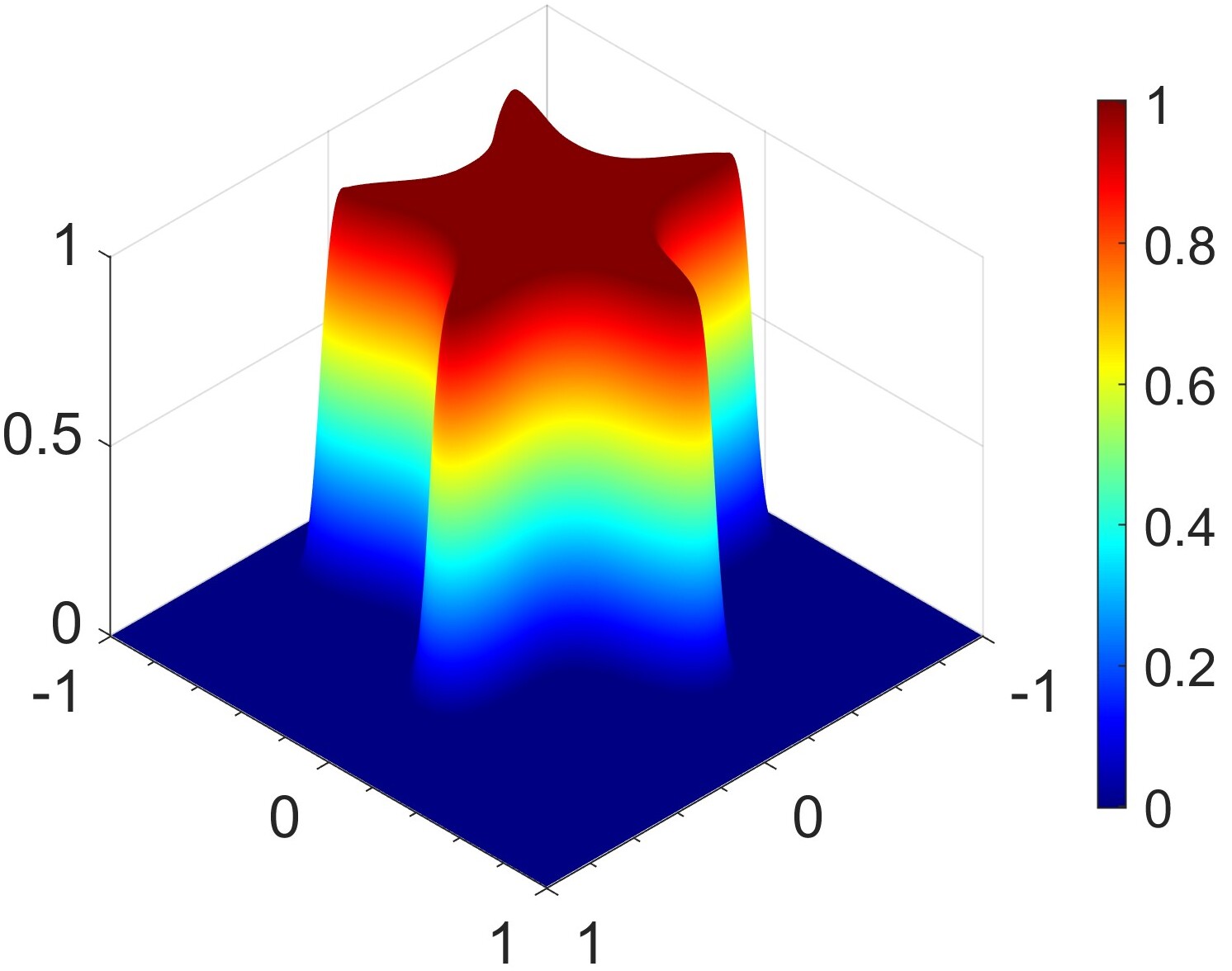}
    \end{subfigure}
    \begin{subfigure}{0.325\linewidth}
        \includegraphics[width=1\linewidth]{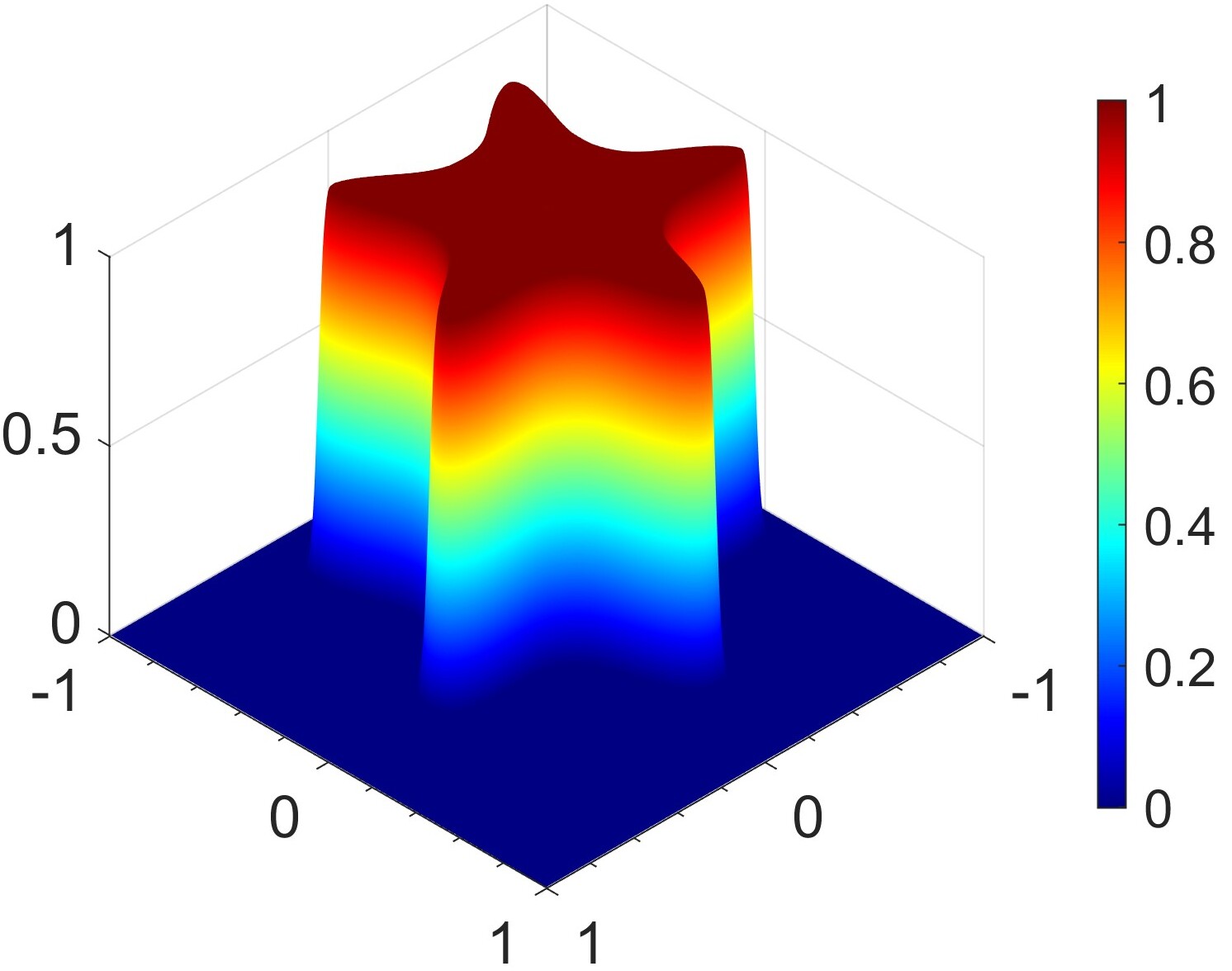}
    \end{subfigure}
    \begin{subfigure}{0.325\linewidth}
        \includegraphics[width=1\linewidth]{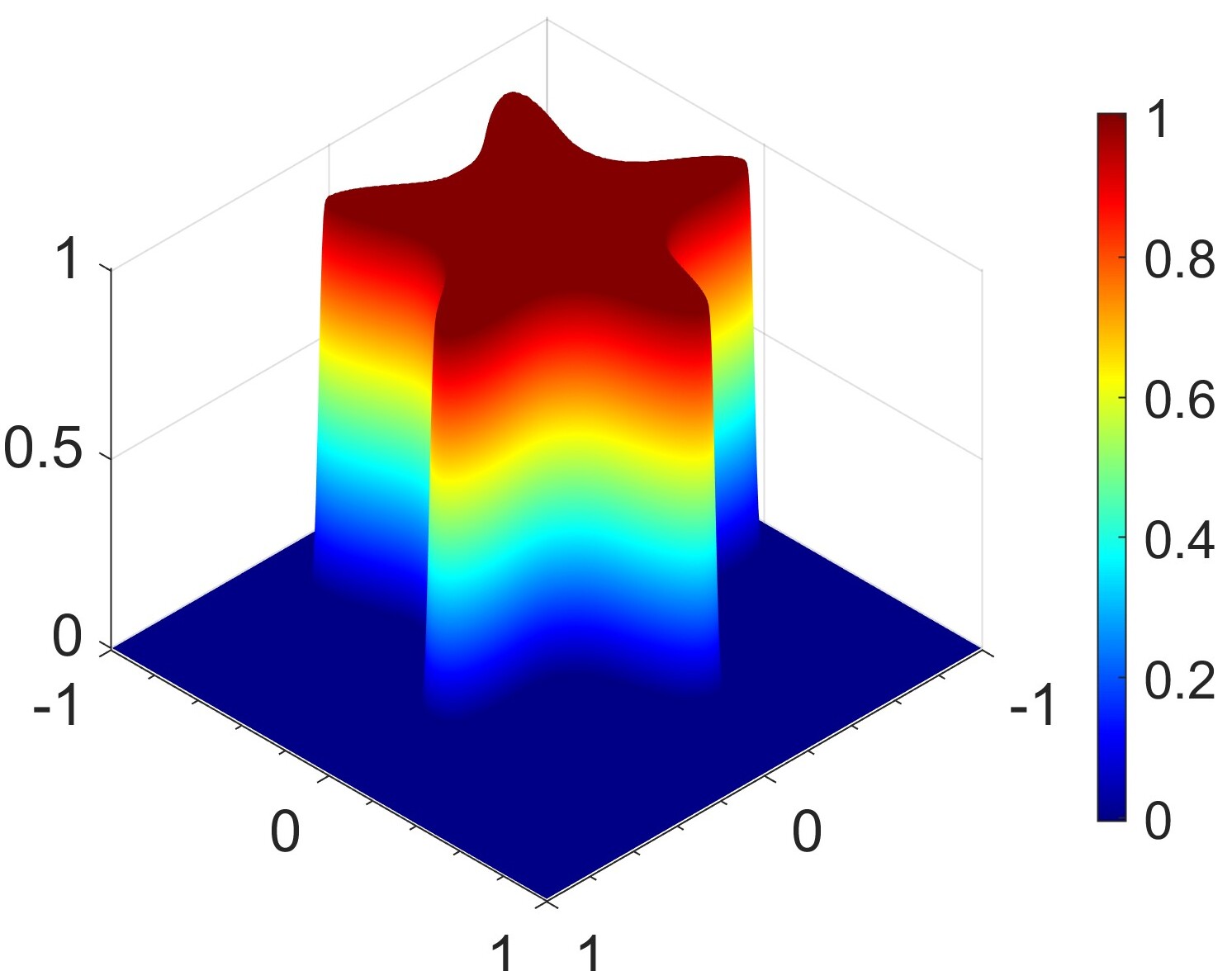}
    \end{subfigure}
    \\
    \begin{subfigure}{0.325\linewidth}
        \includegraphics[width=1\linewidth]{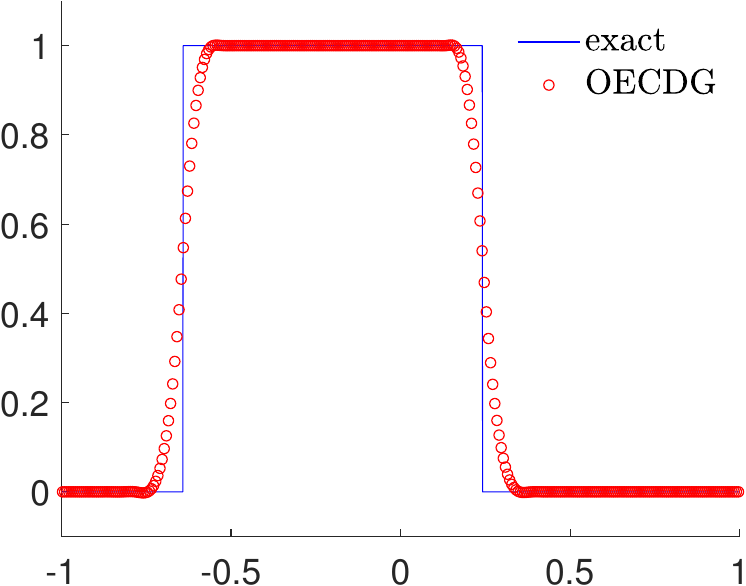}
    \end{subfigure}
    \begin{subfigure}{0.325\linewidth}
        \includegraphics[width=1\linewidth]{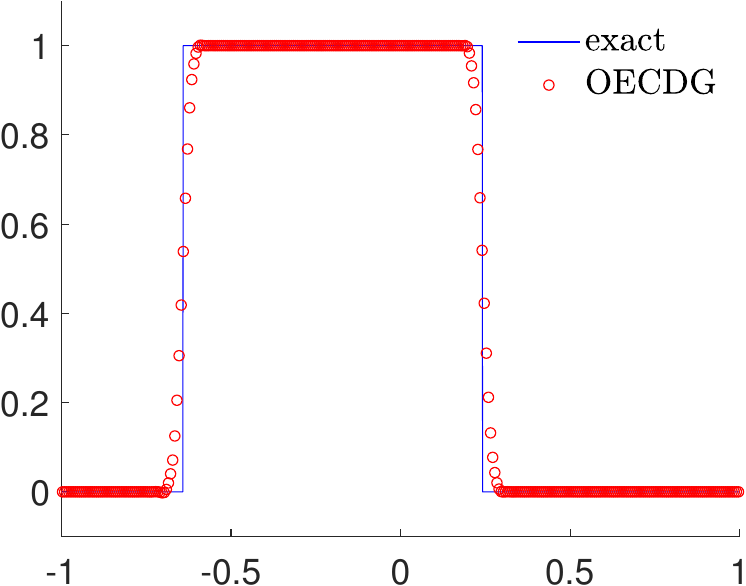}
    \end{subfigure}
    \begin{subfigure}{0.325\linewidth}
        \includegraphics[width=1\linewidth]{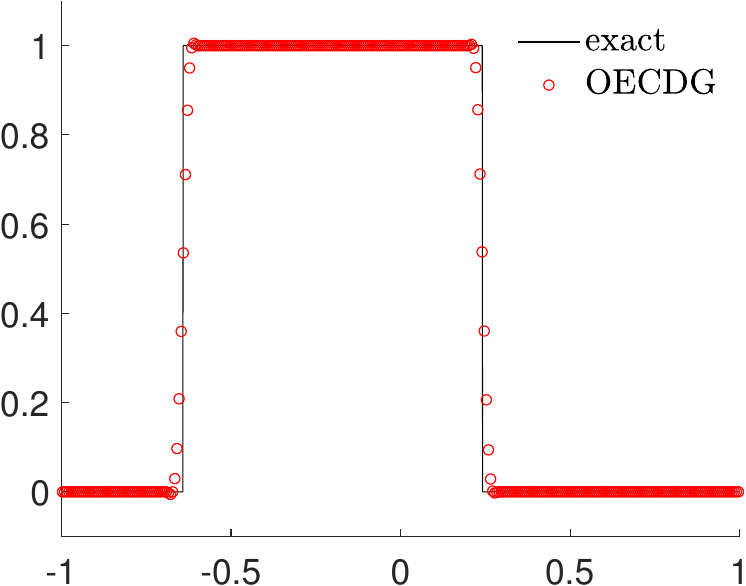}
    \end{subfigure}
    \caption{Top: OECDG solutions at $t = 1.8$. Bottom: Solution cross-sections along $y = -0.25$. Left to right: $\mathbb{P}^1$, $\mathbb{P}^2$, and $\mathbb{P}^3$ elements.}
    \label{Fig: linearpentagram}
\end{figure}
\end{exmp}

\subsection{A nonconvex nonlinear conservation law} We solve $u_t + (\sin(u))_x + (\cos(u))_y = 0$ on $[-2, 2] \times [-2.5, 1.5]$ with $320 \times 320$ cells up to $t = 1$. Inflow conditions are applied on the top and left boundaries, with outflow on the bottom and right. 
\begin{exmp}[KPP problem \cite{kurganov2007adaptive}]
\rm 
The initial data $u(x, y, 0)$ is set to $3.5$ within the circle $x^2 + y^2 < 1$ and $0.25$ elsewhere.  
This problem involves complex 2D wave patterns and a non-convex flux function, challenging high-order schemes. Figure \ref{Fig: KPP} shows the third-order OECDG solution, which captures intricate wave structures, eliminates spurious oscillations, and outperforms the OEDG result.

\begin{figure}[!tbh] 
			\begin{subfigure}{0.46\linewidth}{    \includegraphics[width=1\textwidth]{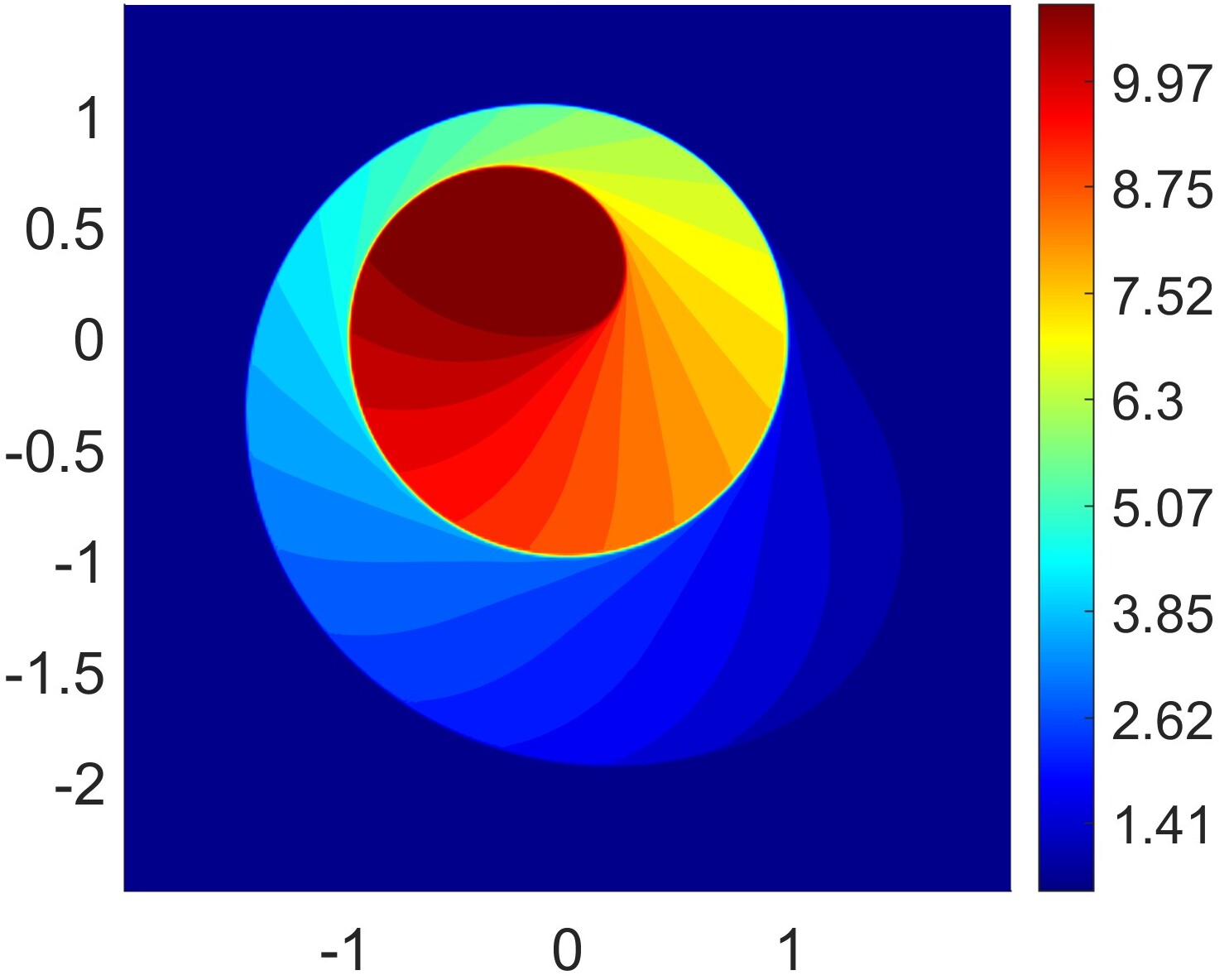}	
					\subcaption{OECDG solution for KPP problem.}
					\label{Fig: KPP}	}
			\end{subfigure}
			\begin{subfigure}{0.47\linewidth}{  \includegraphics[width=1\textwidth]{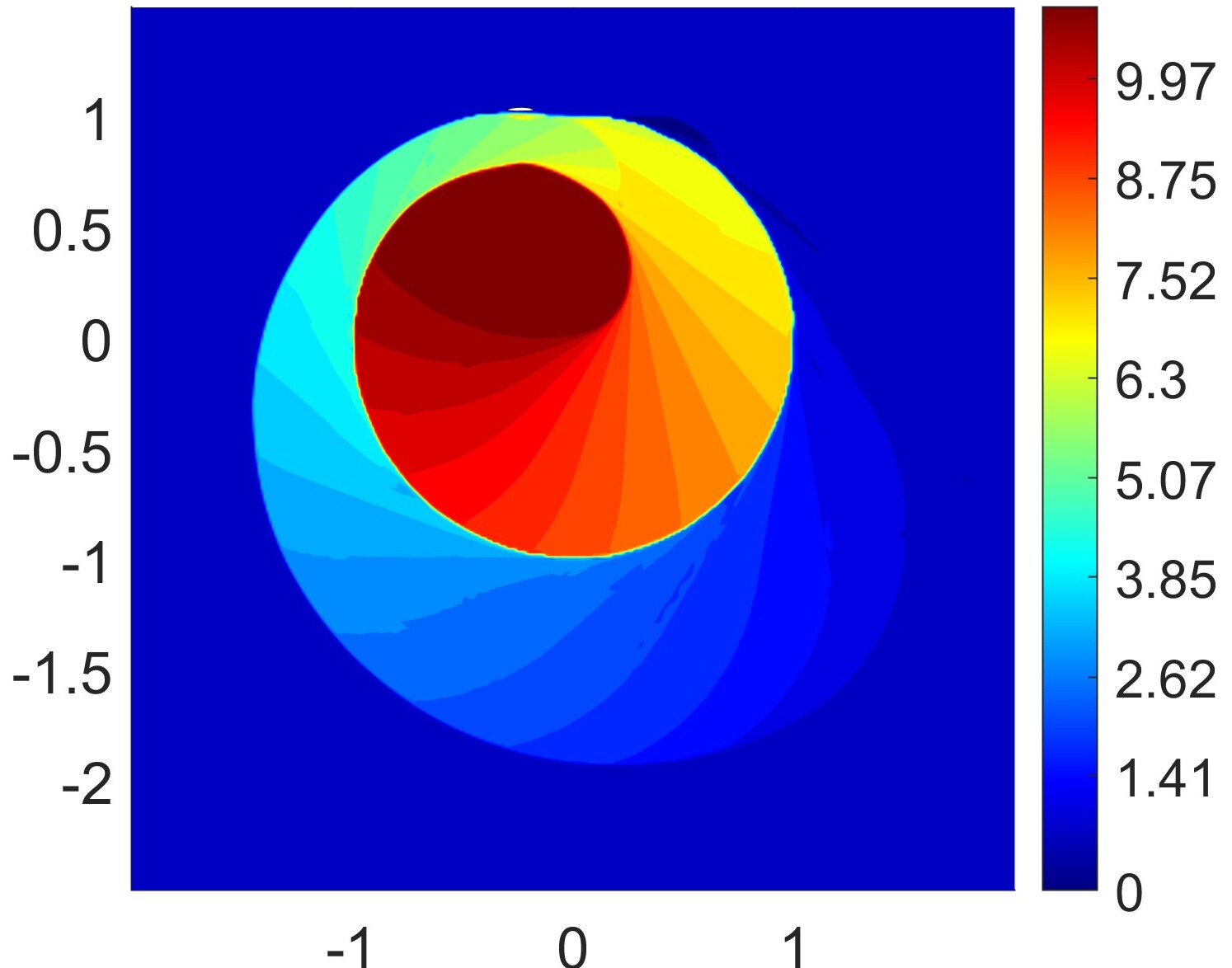}\subcaption{OEDG solution for KPP problem.}
		\label{Fig: OEDGKPP}	}
			\end{subfigure}

\end{figure}
\end{exmp}

\subsection{2D Euler equations}
In this subsection, we simulate various benchmark problems for the 2D Euler equations ${\bm u}_t + \nabla \cdot \boldsymbol{f}({\bm u}) = {\bf 0}$, where ${\bm u} = (\rho, \rho {\bm v}, E)^\top$ and 
${\bm f}({\bm u}) = (\rho {\bm v}, \rho {\bm v} \otimes {\bm v} + p {\bf I}, (E + p) {\bm v})^\top$. 
Here, $\rho$ represents density, ${\bm v}$ is the velocity field, $p$ denotes pressure, and the total energy is $E = \frac{1}{2} \rho |{\bm v}|^2 + \frac{p}{\gamma - 1}$. The adiabatic index $\gamma$ is set to $1.4$ unless specified otherwise.

\begin{exmp}
\rm 
This smooth problem is simulated on $[0, 2] \times [0, 2]$ using uniform rectangular meshes. The exact solution remains $(\rho, u, v, p) = (1 + 0.2 \sin(\pi(x - y)), 1, 1, 1)$. 
Table \ref{2DEuler_steady} lists the errors and convergence rates of the OECDG solutions at $t = 10$, confirming that the methods achieve optimal convergence rates for the 2D Euler equations.

\begin{table}[!tbh]
    \centering
    \caption{Errors and convergence rates for the $\mathbb{P}^k$-based OECDG method.}
    \begin{center}
        \begin{tabular}{c|c|c|c|c|c|c|c}
            \bottomrule[1.0pt]
            $k$ &	$N_x$ &$e_1$  & rate & $e_2$  & rate & $e_3$  & rate \\
            \hline
            \multirow{4}*{$1$}
            &80$\times$80&9.65e-04&-&1.31e-03&-&5.76e-03&-\\ 
            &160$\times$160&1.51e-04&2.68&2.01e-04&2.71&1.21e-03&2.25\\ 
            &320$\times$320&2.09e-05&2.85&2.77e-05&2.86&1.62e-04&2.91\\ 
            &640$\times$640&2.93e-06&2.84&3.80e-06&2.87&2.03e-05&3.00\\ 
            &1280$\times$1280&4.49e-07&2.71&5.70e-07&2.74&2.58e-06&2.98\\ 

            \hline
            \multirow{4}*{$2$}
          &80$\times$80&2.00e-05&-&2.71e-05&-&9.56e-05&-\\ 
          &160$\times$160&1.26e-06&3.99&1.70e-06&3.99&6.265E-06&3.93\\ 
          &320$\times$320&7.98e-08&3.98&1.07e-07&3.99&4.17e-07&3.91\\ 
          &640$\times$640&5.19e-09&3.94&6.89e-09&3.96&2.82e-08&3.89\\ 
          &1280$\times$1280&3.67e-10&3.82&4.80e-10&3.84&2.01e-09&3.81\\ 
            \hline
            \multirow{2}*{$3$}    
            &80$\times$80&1.68e-07&-&2.42e-07&-&1.46e-06&-\\ 
            &160$\times$160&7.08e-09&4.57&9.66e-09&4.65&5.69e-08&4.69\\ 
            &320$\times$320&2.44e-10&4.86&3.28e-10&4.88&1.71e-09&5.06\\ 
            &640$\times$640&8.24e-12&4.88&1.10e-11&4.90&5.71e-11&4.90\\ 
            \toprule[1.0pt]
        \end{tabular}
        \label{2DEuler_steady}
    \end{center}
\end{table}
\end{exmp}

\begin{exmp}[supersonic flow \cite{zhu2017numerical}]
The initial conditions are $(\rho_0, u_0, v_0, p_0) = \left(1, \cos\left(\frac{\pi}{12}\right), \sin\left(\frac{\pi}{12}\right), \frac{1}{9\gamma}\right)$, modeling a supersonic flow interacting with plates positioned at $(x, y) \in (2, 3) \times \{\pm 2\}$ with an attack angle of $15^{\circ}$. Inflow conditions are applied at the lower and left edges, with outflow conditions on the upper and right edges. 
Figure \ref{Fig: supersonic}(a) shows the density contour computed using the fourth-order OECDG method on the domain $[0, 10] \times [-5, 5]$ at $t = 100$ with $200 \times 200$ rectangular cells. Figure \ref{Fig: supersonic}(b) demonstrates that the average residue reaches machine epsilon, indicating a well-resolved steady state. The OECDG solution shows no spurious oscillations.

\begin{figure}[!tbh]
    \centering
    \begin{subfigure}[h]{.52\linewidth}
        \centering 
        \includegraphics[width=0.9\textwidth]{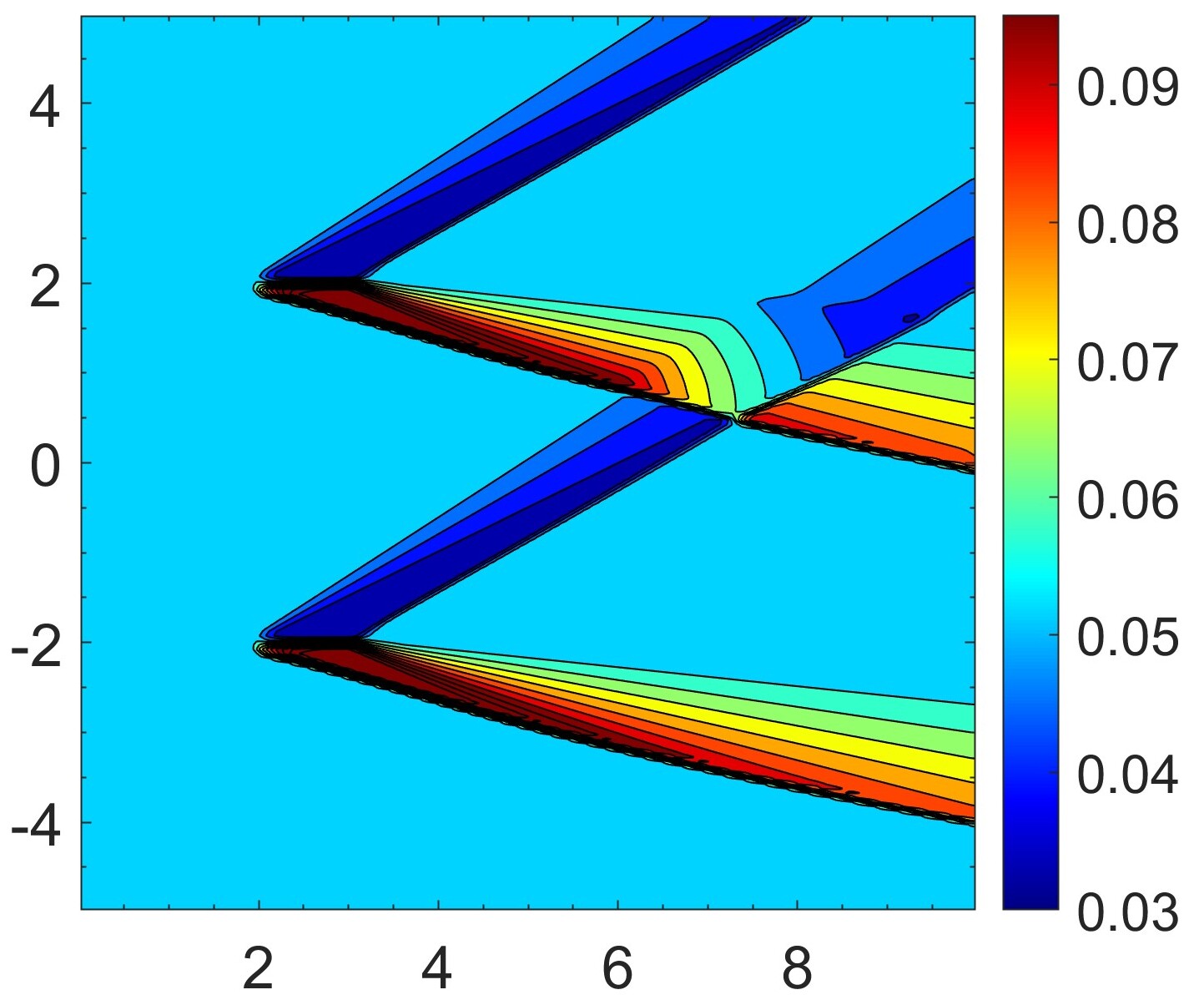}
        \subcaption{Density contour at $t=100$}
        \label{2D_supersonicAA}
    \end{subfigure}
    \hfill 
    \begin{subfigure}[h]{.46\linewidth}
        \centering 
        \includegraphics[width=1.05\textwidth]{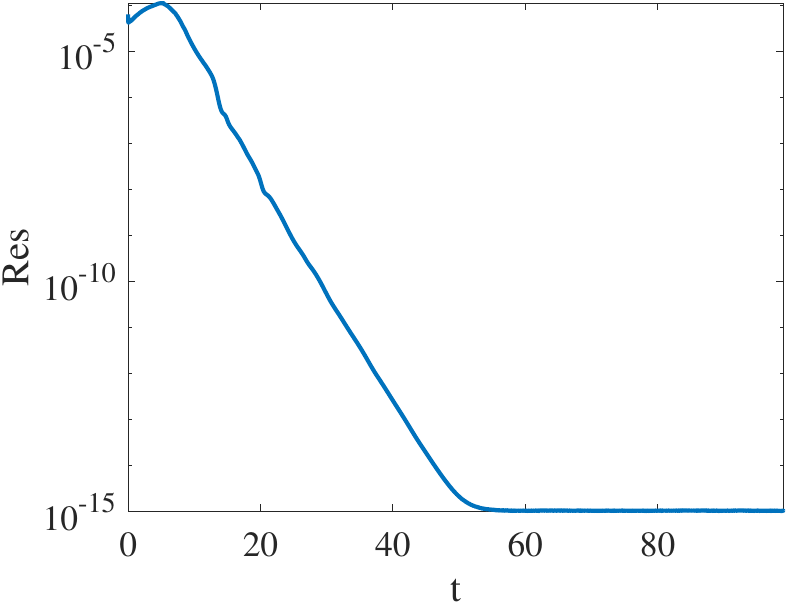}
        \subcaption{Average residue}
        \label{2D_supersonicBB}
    \end{subfigure}
    \caption{Supersonic flow problem simulated with the $\mathbb{P}^3$-based OECDG scheme.}
    \label{Fig: supersonic}
\end{figure}
\end{exmp}

\begin{exmp}[shock-vortex interaction]
\rm
This test simulates the interaction between a vortex and a stationary shock located to the left of $x = 0.5$ in the domain $[0, 2] \times [0, 1]$. The shock, aligned perpendicular to the $x$-axis, has conditions $(\rho, u, v, p) = (1, 1.1\sqrt{\gamma}, 0, 1)$. 
The vortex introduces perturbations defined by
\[
(\delta u, \delta v) = \frac{\epsilon}{r_c} e^{\alpha(1 - \tau^2)} (y - y_c, x_c - x), \quad \delta T = -\frac{(\gamma - 1) \epsilon^2}{4\alpha \gamma} e^{2\alpha(1 - \tau^2)}, \quad \delta S = 0,
\]
where $T = p / \rho$ and $S = \ln(p / \rho^{\gamma})$. The vortex is centered at $(x_c, y_c) = (0.25, 0.5)$ with strength $\epsilon = 0.3$, critical radius $r_c = 0.05$, and decay rate $\alpha = 0.204$. The domain is bounded by walls on the upper and lower sides, with inflow and outflow conditions on the left and right, respectively. Figure \ref{Fig: vortex} shows the density contours from the fourth-order OECDG method at three time points up to $t = 0.8$, using $400 \times 200$ uniform cells. The shock-vortex interaction dynamics match the results in \cite{peng2023oedg} and are captured with high resolution and free of nonphysical oscillations.
\end{exmp}

		\begin{figure}[!tbh]
			\centering
			\begin{subfigure}[h]{.325\linewidth}
				\centering 
				\includegraphics[width=0.99\textwidth]{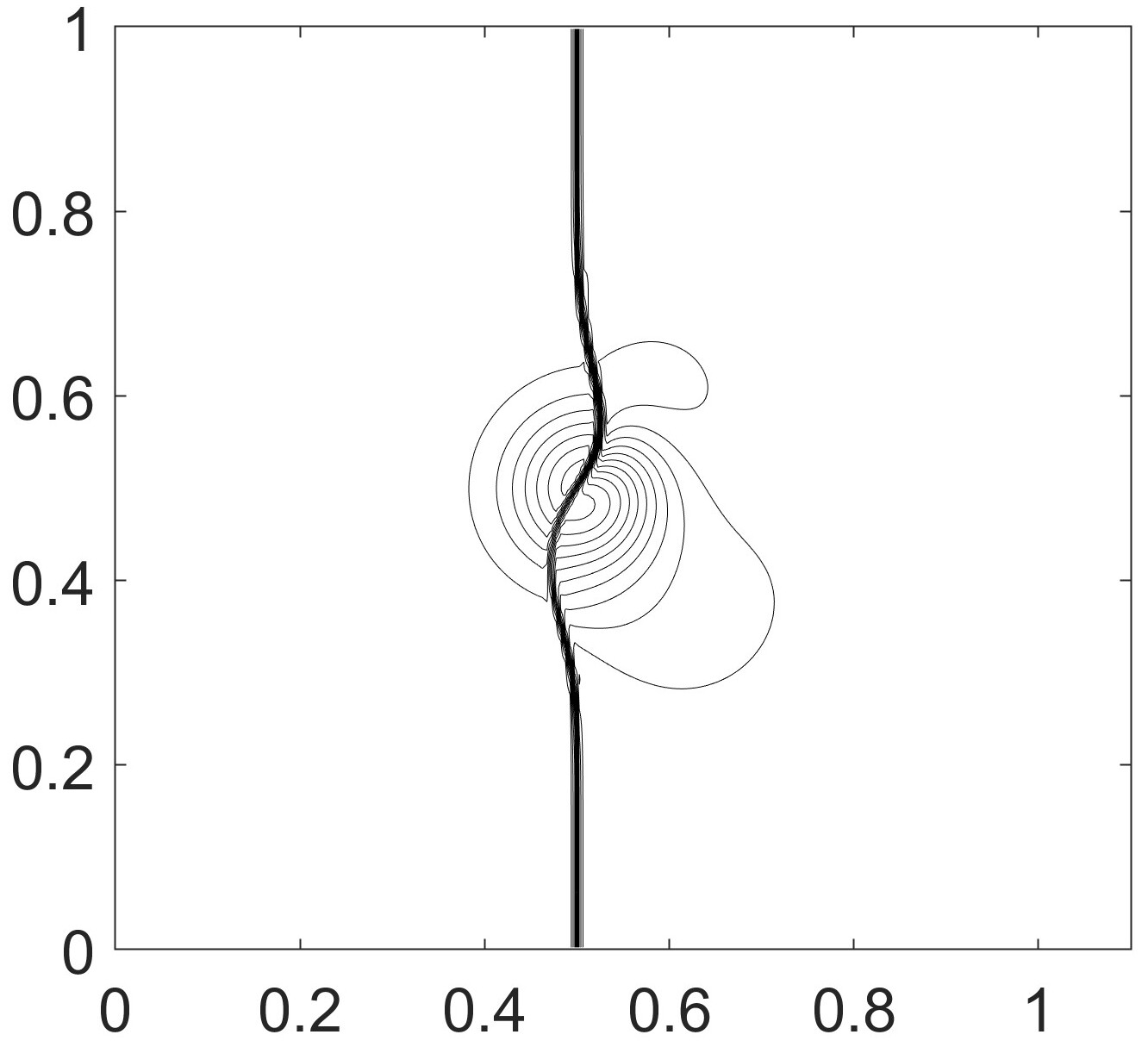}
				\subcaption{$t=0.203$}
			\end{subfigure}
			\hfill 			
			\begin{subfigure}[h]{.325\linewidth}
				\centering 
				\includegraphics[width=0.99\textwidth]{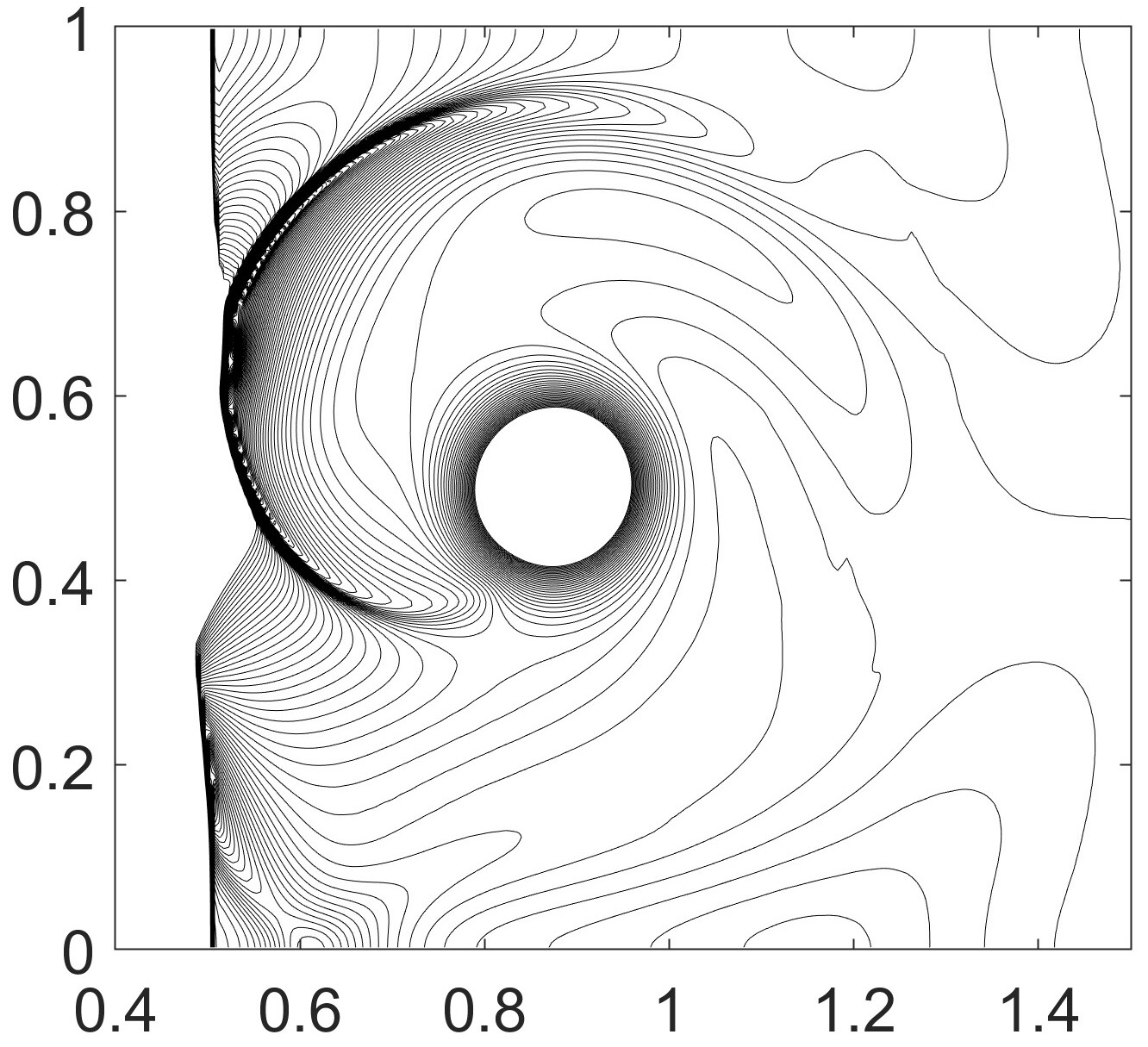}
				\subcaption{$t=0.529$}
			\end{subfigure}
			\hfill 
			\begin{subfigure}[h]{.325\linewidth}
				\centering 
				\includegraphics[width=0.99\textwidth]{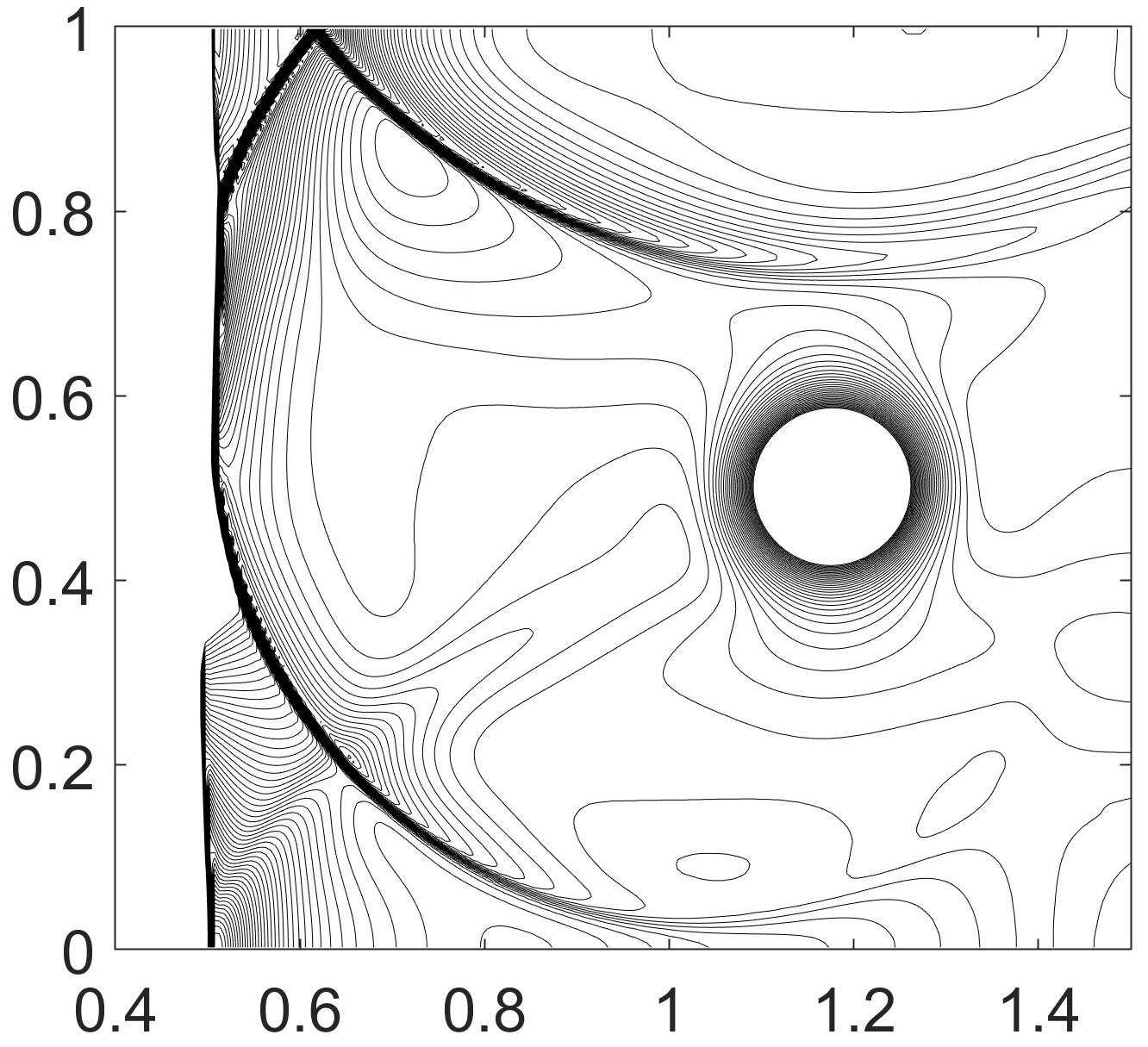}
				\subcaption{$t=0.8$}
			\end{subfigure}
			\caption{Density contours for shock-vortex interaction. (a): 30 contour lines from 0.68 to 1.3. (b)--(c): 90 contour lines from 1.19 to 1.36.}
			\label{Fig: vortex}
		\end{figure}

\begin{exmp}[high-speed jets \cite{ha2005numerical}]
\rm
This example simulates two high-speed jet problems, with the heat capacity ratio set to $\gamma = \frac{5}{3}$. Outflow boundary conditions are applied on all sides except for the jet inlet. Both tests use the $\mathbb{P}^2$-based OECDG method with a positivity-preserving limiter \cite{li2016maximum} and $C_{\rm CFL} = 0.2$. Simulations are performed on the upper half of the domain, with full-domain solutions obtained by mirroring across the midline. 
\begin{itemize}
    \item {\bf Mach 80 Jet}: The initial state is $(\rho, u, v, p) = (0.5, 0, 0, 0.4127)$ over $(0, 2) \times (-0.5, 0.5)$, with a jet inflow of $(\rho, u, v, p) = (5, 30, 0, 0.4127)$ at ${0} \times (-0.05, 0.05)$. The simulation proceeds to $t = 0.07$ with $480 \times 60$ uniform cells. Logarithmic density, pressure, and temperature are plotted in Figure \ref{fig: jet2000}.
    \item {\bf Mach 2000 Jet}: The domain $(0, 1) \times (-0.25, 0.25)$ is initialized with $(\rho, u, v, p) = (0.5, 0, 0, 0.4127)$. A high-speed jet of $(\rho, u, v, p) = (5, 800, 0, 0.4127)$ enters through $\{0\} \times (-0.05, 0.05)$. Simulations are conducted up to $t = 0.001$ on a grid of $320 \times 80$ cells, with cell-averaged density, pressure, and temperature presented in logarithmic scale in Figure \ref{fig: jet2000}.
\end{itemize}
Both simulations effectively capture bow shocks and interface shear flows, demonstrating the OECDG scheme's robustness in handling challenging jet simulations without generating nonphysical oscillations.

\end{exmp}

\begin{figure}[!tbh]
    \centering 
    \begin{subfigure}{0.32\linewidth}
        \includegraphics[width=1\linewidth]{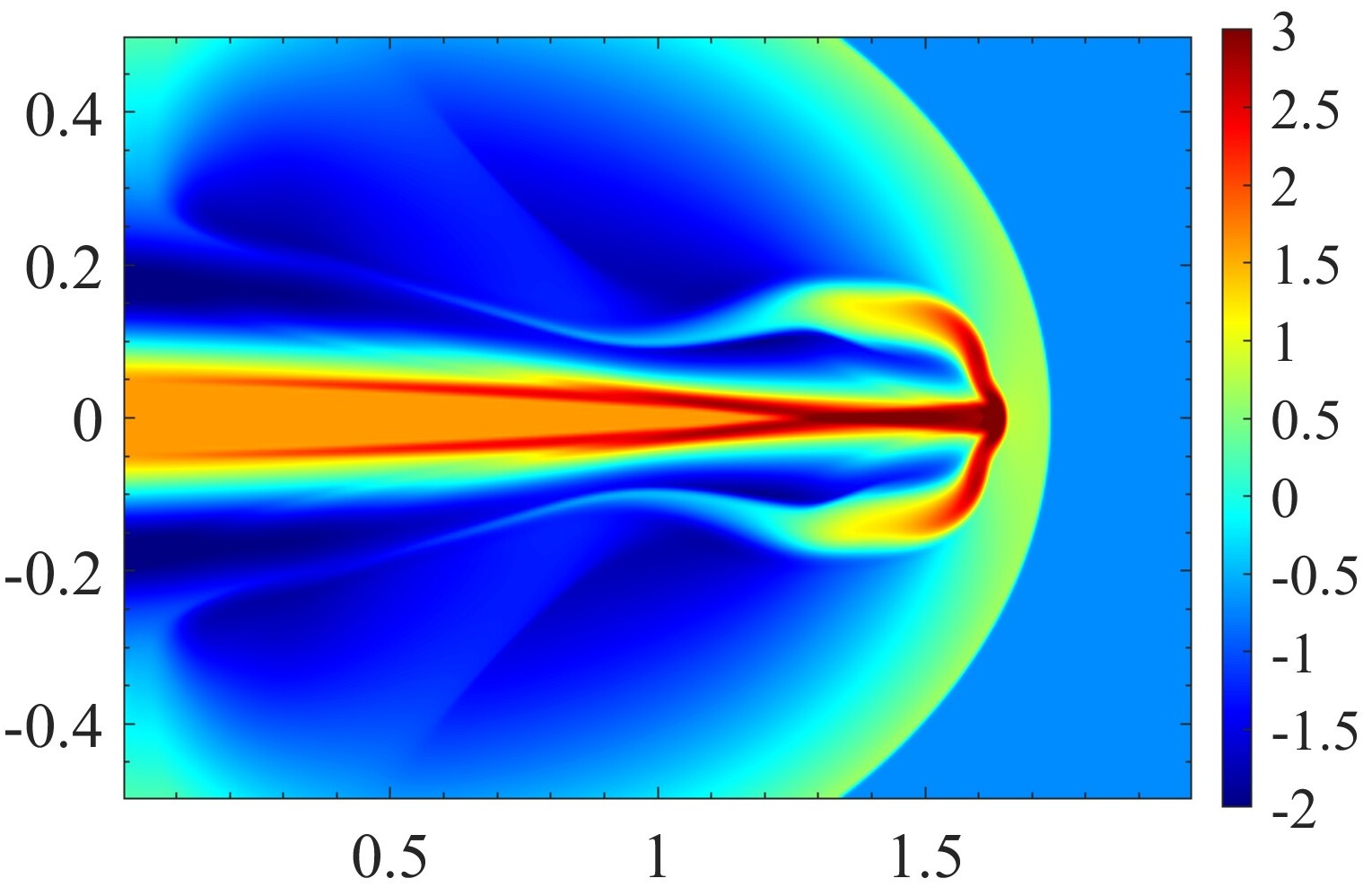}
    \end{subfigure}
    \begin{subfigure}{0.32\linewidth}
        \includegraphics[width=1\linewidth]{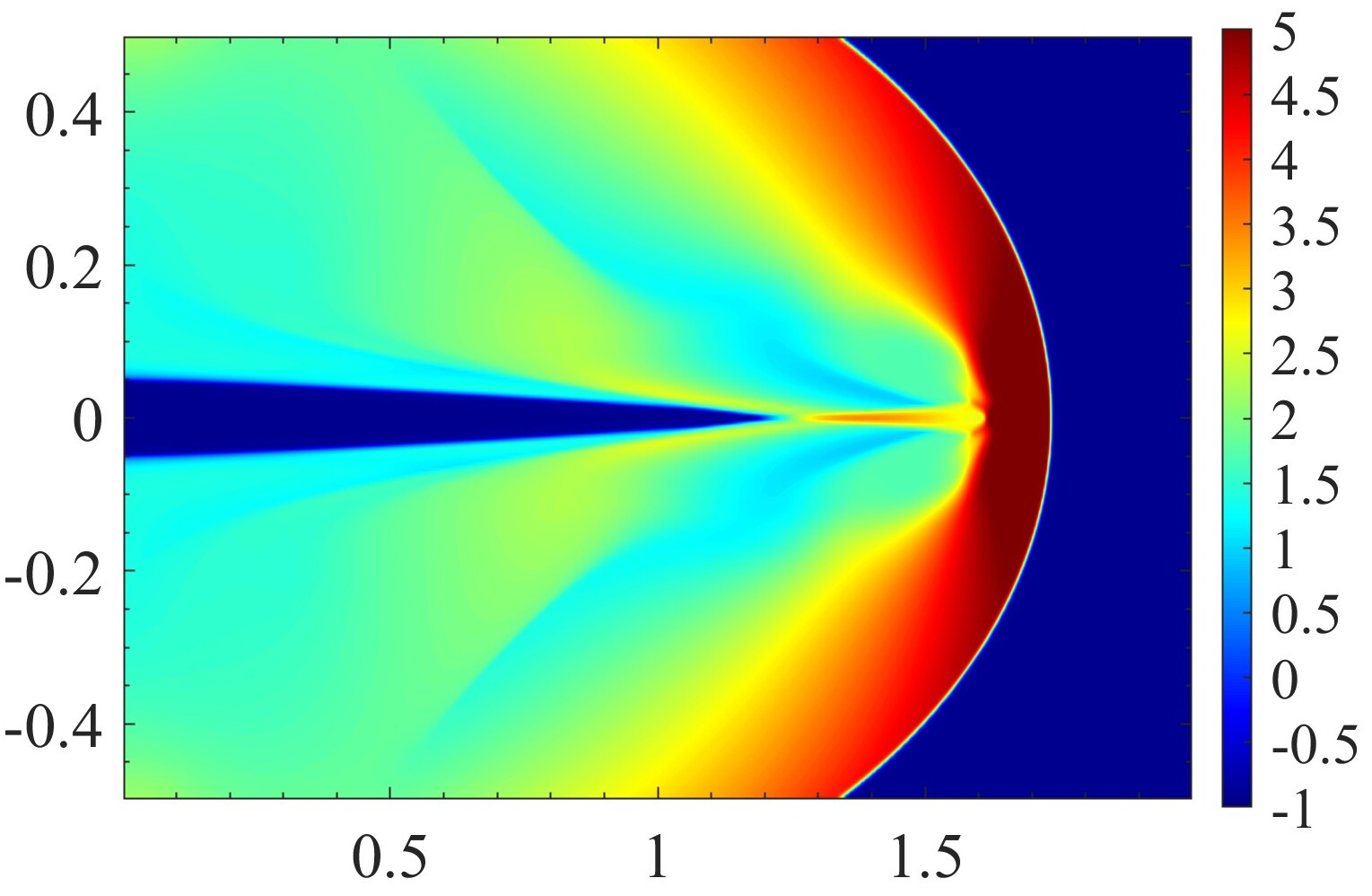}

    \end{subfigure}
    \begin{subfigure}{0.32\linewidth}
        \includegraphics[width=1\linewidth]{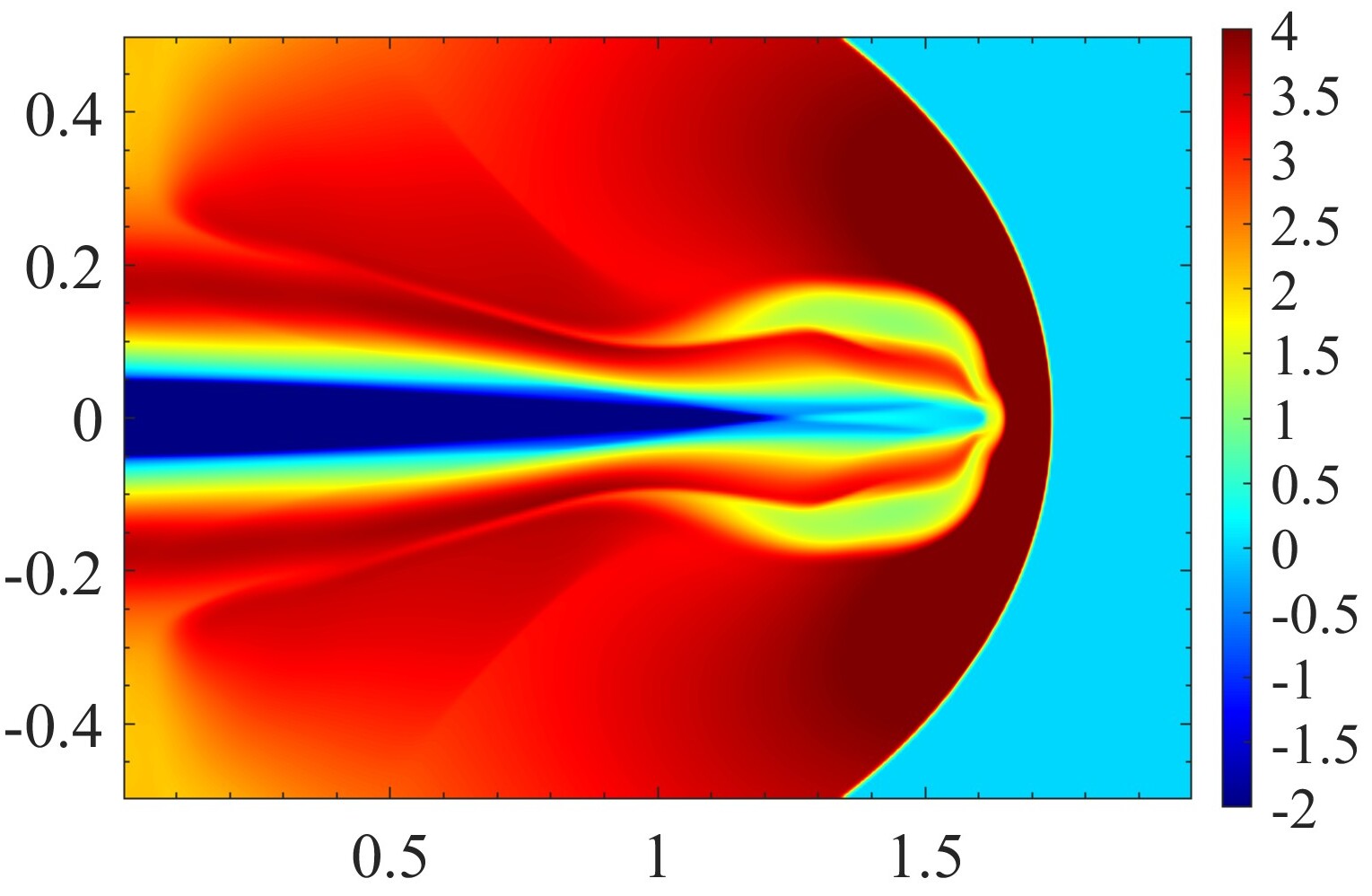}
    \end{subfigure}
    \\
    \begin{subfigure}{0.32\linewidth}
        \includegraphics[width=1\linewidth]{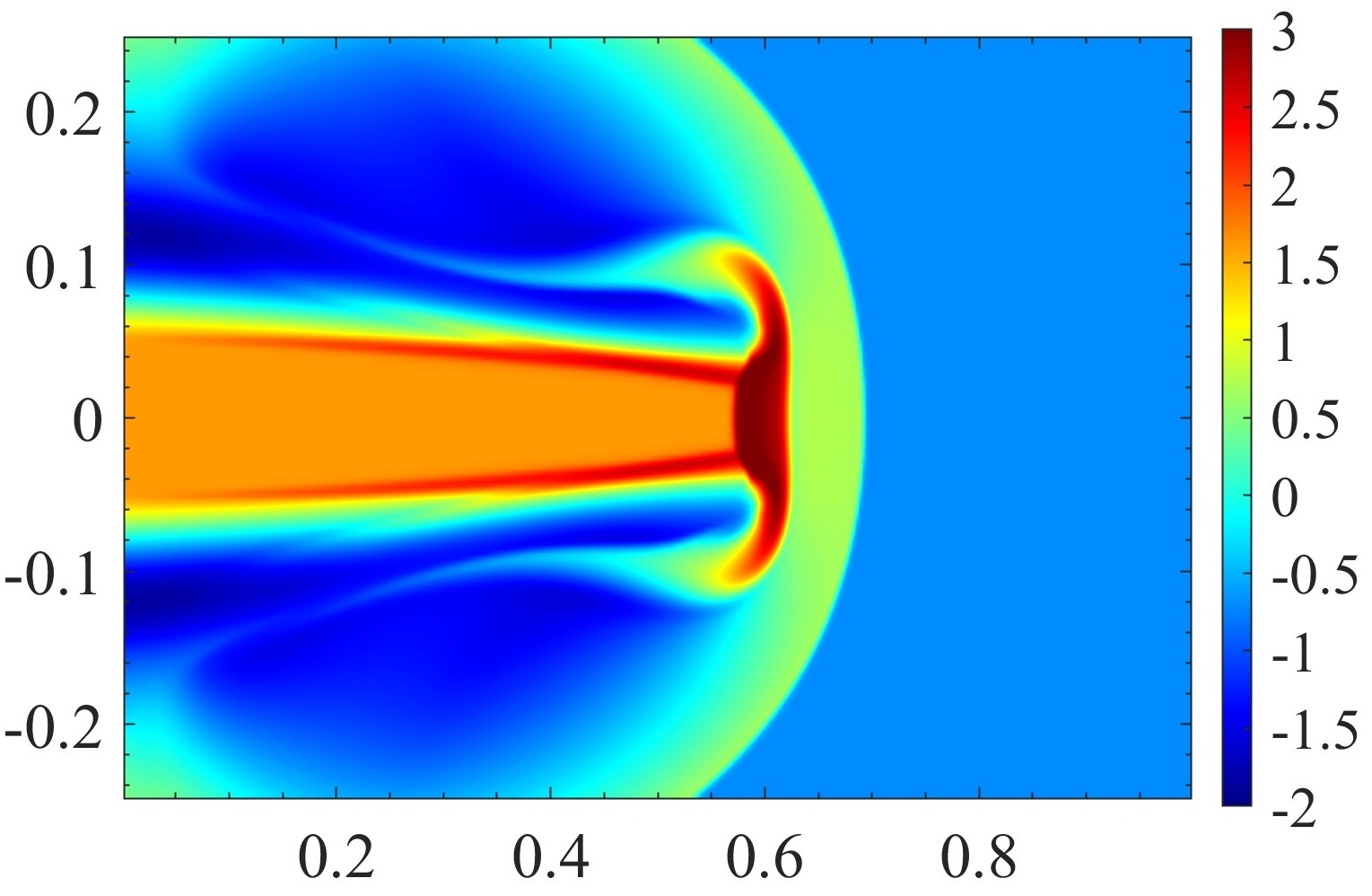}
    \end{subfigure}
    \begin{subfigure}{0.32\linewidth}
        \includegraphics[width=1\linewidth]{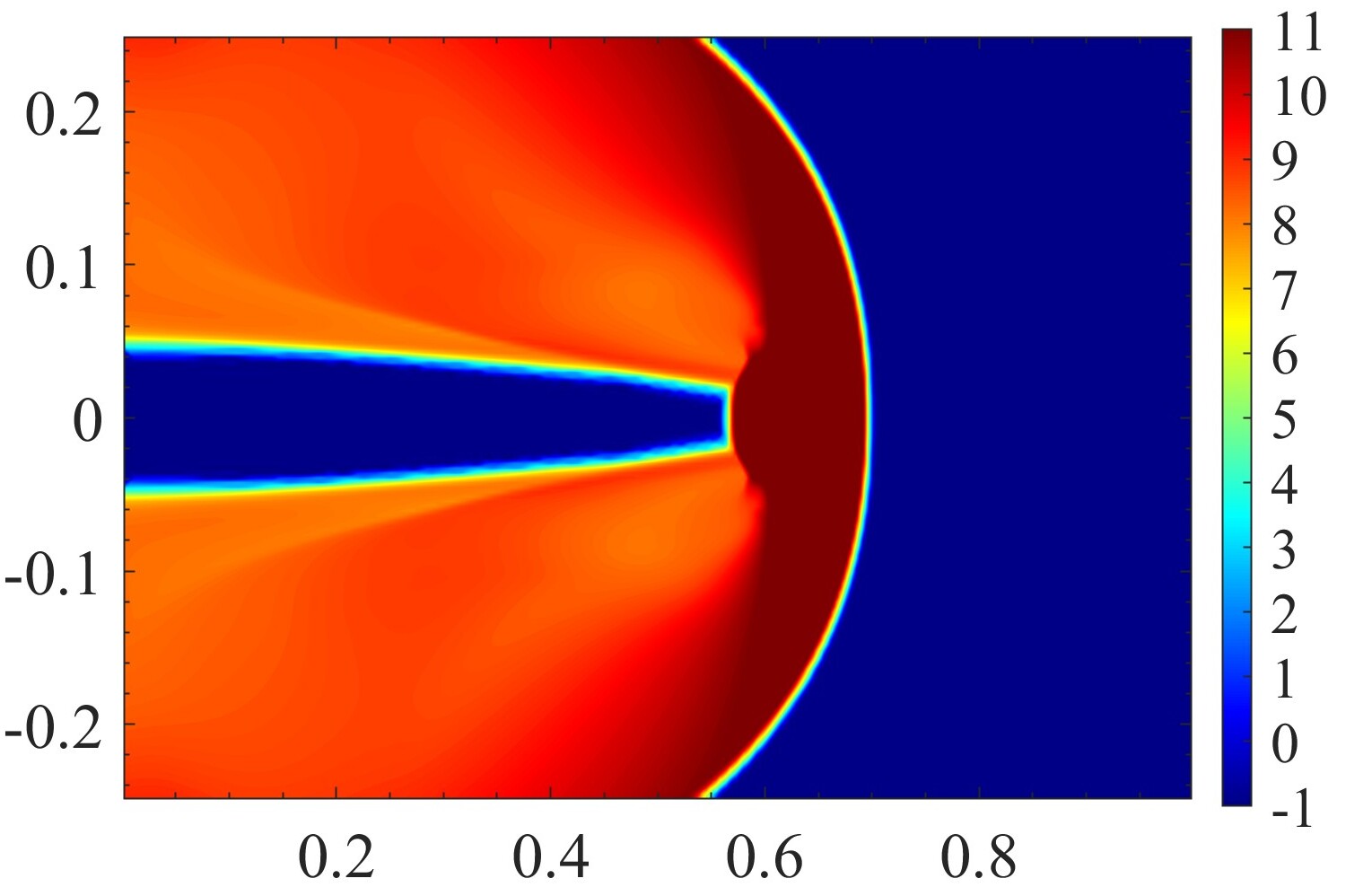}
    \end{subfigure}
    \begin{subfigure}{0.32\linewidth}
        \includegraphics[width=1\linewidth]{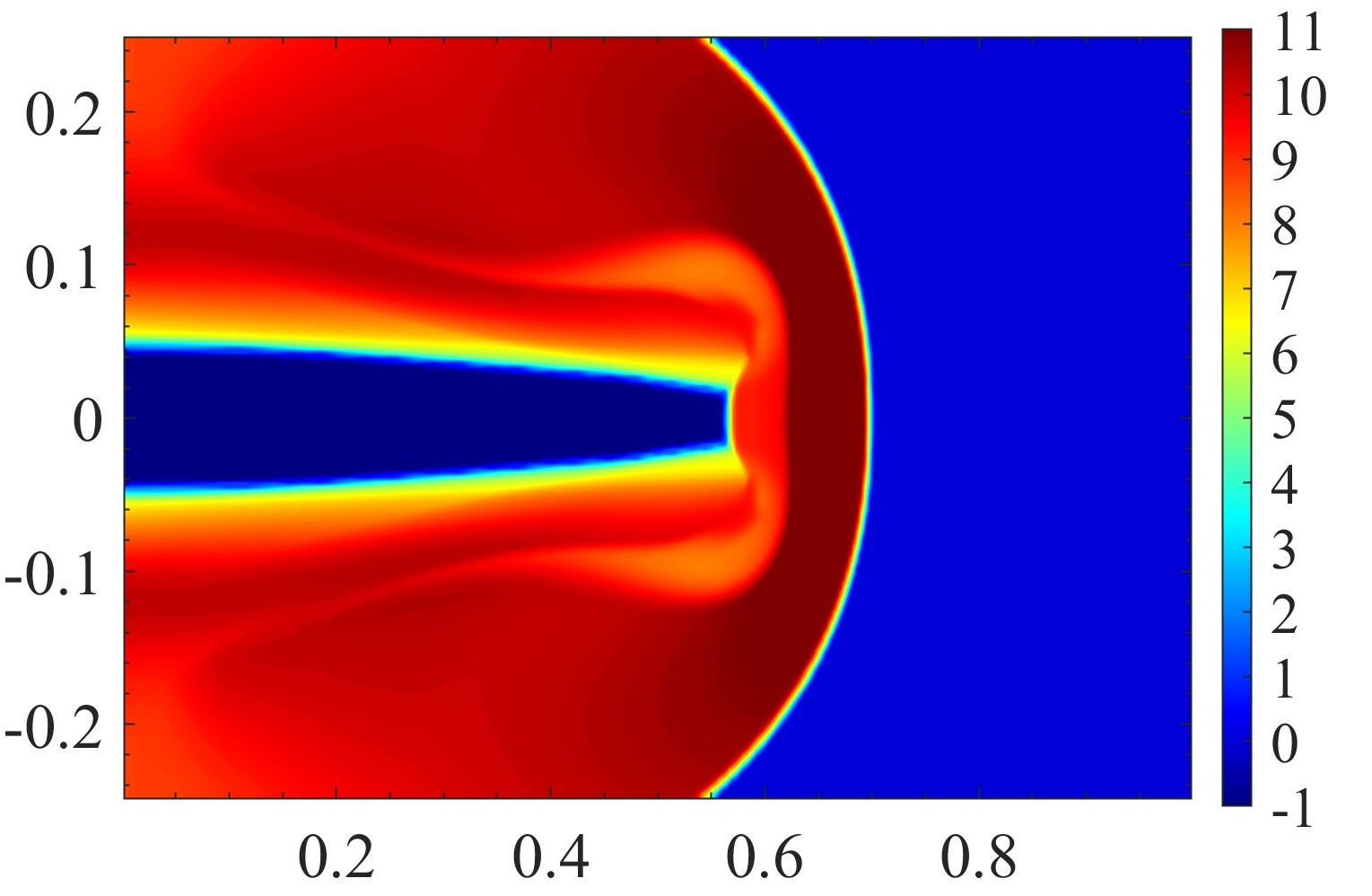}
    \end{subfigure}
\caption{Mach 80 jet at $t = 0.07$ (top) and Mach 2000 jet at $t = 0.001$ (bottom): density, pressure, and temperature logarithms (left to right).}
    \label{fig: jet2000}
\end{figure}

\section{Conclusions}\label{sec:conclusions}

In this work, we have developed and analyzed the OECDG method, a novel central discontinuous Galerkin (CDG) scheme that effectively integrates a new oscillation-eliminating (OE) procedure with a novel dual damping mechanism within the CDG framework. 
Inspired by the numerical dissipation inherent to CDG schemes, the new OE damping mechanism is designed based on the difference between overlapping solutions, enhancing both resolution and stencil compactness compared to original OE procedures that rely on inter-cell solution jumps. 
Our theoretical contributions include the rigorous derivation of optimal error estimates for the fully discrete OECDG method, addressing a critical gap in the error analysis of fully discrete CDG schemes, including those without oscillation control. For the first time, we have established the approximate skew-symmetry and weak boundedness of the CDG spatial discretization, which provides a robust foundation for stability analysis. Using matrix transfer techniques, we have proven the linear stability of CDG methods coupled with Runge–Kutta time discretization. Based on these results, we have established optimal error estimates for the fully discrete OECDG method, a notable challenge given the method’s nonlinear nature, even for linear advection equations. 
The OECDG method's efficacy has been further validated through an extensive suite of numerical experiments across a diverse set of hyperbolic conservation laws, including convection equations, Burgers' equation, traffic flow models, non-convex conservation laws, and Euler equations. The results consistently demonstrate that the OECDG method not only achieves optimal convergence rates but also effectively captures complex wave structures without introducing spurious oscillations. 
Overall, this study affirms the accuracy and robustness of OECDG schemes while underscoring their potential as versatile tools for tackling challenging hyperbolic problems.

    \begin{changemargin}{-0.5cm}{-0.3cm}  %
	    \renewcommand\baselinestretch{0.86}
	    \bibliography{refs}
	    \bibliographystyle{siamplain}
	\end{changemargin}
\end{document}